\DeclareFontFamily{U}{mathx}{\hyphenchar\font45}
\DeclareFontShape{U}{mathx}{m}{n}{
      <5> <6> <7> <8> <9> <10>
      <10.95> <12> <14.4> <17.28> <20.74> <24.88>
      mathx10
      }{}
\DeclareSymbolFont{mathx}{U}{mathx}{m}{n}
\DeclareMathAccent{\widecheck}    {0}{mathx}{"71}
\theoremstyle{plain}
\newtheorem{theorem}{Theorem}[section]
\newtheorem{lemma}[theorem]{Lemma}
\newtheorem{cor}[theorem]{Corollary}
\theoremstyle{definition}
\newtheorem{remark}[theorem]{Remark}
\newtheorem{dfn}[theorem]{Definition}
\numberwithin{equation}{section}
\begin{document}

\title{Compactness of Alexandrov-Nirenberg Surfaces}
\author[Han]{Qing Han}
\address{Department of Mathematics\\
University of Notre Dame\\
Notre Dame, IN 46556, USA} \email{qhan@nd.edu}
\address{Beijing International Center for Mathematical Research\\
Peking University\\
Beijing, 100871, China} \email{qhan@math.pku.edu.cn}
\author[Hong]{Jiaxing Hong}
\address{School of Mathematical Science, Fudan University, Shanghai, 200433, China}
\email{jxhong@fudan.edu.cn}
\author[Huang]{Genggeng Huang }
\address{School of Mathematical Science, Fudan University, Shanghai, 200433, China}
\email{genggenghuang@fudan.edu.cn}
\thanks{The first author acknowledges the support of NSF
Grant DMS-1105321.
The second and the third authors acknowledge the support of NNSF Grants 11121101 and 11131005.}
%\date{\today}
\begin{abstract}
We study a class of compact surfaces in $\mathbb R^3$ introduced by
Alexandrov and generalized by
Nirenberg and prove a compactness result under suitable assumptions
on induced metrics and Gauss curvatures.
\end{abstract}
\maketitle

\section{Introduction}

Compactness plays an important role in many subjects of mathematics. With the compactness
of certain classes of geometric or analytic objects, one can take limits in appropriate topology and,
by analyzing the limits, obtain desired properties of the entire classes.
Nirenberg \cite{N1} proved that the class of smooth closed convex surfaces in $\mathbb R^3$ is compact
in $C^k$-topology, for any positive integer $k\ge 3$.
This result plays an important role in his solution of the
Weyl problem, concerning the isometric embedding of smooth metrics on $\mathbb S^2$ in
$\mathbb R^3$.

Isometric embedding is a classical problem in differential geometry.
In 1916, Weyl \cite{Weyl1916} studied whether every smooth metric
on $\mathbb S^2$ with positive Gauss curvature admits a
smooth isometric embedding in $\mathbb R^3$. This problem, now referred to as the
Weyl problem, was solved by Nirenberg \cite{N1}
and   Pogorelov \cite{Pogorelov1953}
independently in the early
1950s.
In 1990s, Guan and Li \cite{GL}, and Hong and Zuily \cite{HZ} independently
generalized this result to metrics on $\mathbb S^2$ with
nonnegative Gauss curvature.

Closely related to the global isometric embedding problem is the
rigidity question. The first rigidity result was proved by
Cohn-Vossen \cite{Cohn-Vossen1927} in 1927; this states that any two closed isometric
analytic convex surfaces are congruent to each other. In 1943, Herglotz \cite{Herglotz1943} gave
a short proof of the rigidity, assuming that the surfaces are
three times continuously differentiable. In 1962 it was
extended by Sacksteder \cite{Sacksteder1962} to surfaces  with no more than two times
continuously differentiable metrics.

It is natural to study the isometric embedding and the rigidity  for surfaces with
Gauss curvature of mixed sign. For rigidity, Alexandrov \cite{AL} in 1938 introduced a class of
surfaces satisfying some integral condition for its Gauss curvature
and proved that any compact analytic surface with this condition is
rigid. In 1963, Nirenberg \cite{N2} generalized this result to smooth
surfaces under extra assumptions.

For the global isometric embedding of metrics defined on general compact surfaces,
Han and Lin
\cite{HL} recently made the first attempt and discussed the isometric embedding of metrics
defined on torus. They found obstructions to the existence of
such isometric embedding. Specifically, they found a one-parameter
family of analytic metrics which are small perturbations of the
standard metric on torus and do not admit any $C^2$ isometric
embedding in $\mathbb R^3$.

We should point out that vanishing Gauss curvature causes serious problems
even for the local isometric embedding of 2-dimensional
Riemannian metrics in $\mathbb R^3$. In 1985 and 1986, Lin \cite{Lin1985},
\cite{Lin1986} proved the existence of
sufficiently smooth local isometric embedding if the Gauss curvature is nonnegative or
the Gauss curvature changes sign cleanly. For other results on local isometric
embedding, refer to \cite{Han2005}, \cite{Han2005b}, \cite{Han-Hong-Lin2003} and
\cite{Han&Khuri2010}.

As is well-known, a closed surface $M$ in $\mathbb R^3$ satisfies
\begin{equation}\label{eq-ANbigger4pi}\int_{M} K^+dg\ge 4\pi,\end{equation}
where $K$ is the Gauss curvature of $M$ and $K^+$ is its positive part,
i.e., $K^+=\max\{0, K\}$.
This simply says that the image of the Gauss map on $\{p\in M:\, K>0\}$ covers
the unit sphere $\mathbb S^2$ at least once. Such an integral condition provides
an obstruction for the existence of isometric embedding of metrics
on closed surfaces.

Alexandrov \cite{AL} and Nirenberg \cite{N2} studied oriented
closed surfaces in $\mathbb R^3$ satisfying the equality in \eqref{eq-ANbigger4pi} and
proved the rigidity under appropriate
non-degeneracy condition for the Gauss curvature.
Since \eqref{eq-ANbigger4pi} involves the part of the surface where
Gauss curvature is positive, we will formulate results by Alexandrov and Nirenberg
accordingly as follows. Refer to \cite{N2}, or \cite{HQH}, for a proof.

\smallskip

\noindent
{\bf Theorem A.} {\it Let $\Sigma$ be an oriented and bounded
$C^4$-surface in $\mathbb R^3$ with nonempty boundary. Suppose}
\begin{align}\label{eq-ANCondition} \begin{split}
&K>0\quad\text{in }\Sigma,\\
&K=0\text{ and }\nabla K\neq 0\quad\text{on }\partial\Sigma, \\
&\int_{\Sigma}Kdg=4\pi.\end{split}\end{align}
{\it Then,

$\operatorname{(1)}$ $\partial\Sigma$ consists
of finitely many smooth planar convex curves $\sigma_j$,  $j=1,...,J$. Moreover,
the plane containing $\sigma_j$ is tangent to $\Sigma$ along $\sigma_j$, for each $j=1,\cdots,J$;

$\operatorname{(2)}$ the geodesic curvature $k_g$ of $\sigma_j$ is negative, for each $j=1,\cdots,J$;

$\operatorname{(3)}$ $\Sigma\bigcup \partial\Sigma$ is rigid.}

\smallskip

In this paper, we initiate our study of surfaces introduced by Alexandrov and Nirenberg, as in
Theorem A. For convenience, we introduce the following terminology.

\begin{dfn}\label{dfn-ANSurfaces} We call $\Sigma$ an
{\it Alexandrov-Nirenberg surface} if it satisfies \eqref{eq-ANCondition}. \end{dfn}

Our ultimate goal is to study the isometric embedding related to Alexandrov-Nirenberg
surfaces. The rigidity result in Theorem A(3) can be interpreted as the uniqueness of the
isometric embedding. We are interested in the existence of the related isometric embedding.
If we attempt to employ the method of continuity to
prove the existence of such an isometric embedding, a necessary step is to prove the
closedness of the embeddable metrics, or the compactness of the associated surfaces.

The main result of the present paper is the following compactness result.

\begin{theorem}\label{them-main} For each integers $J\ge 1$ and $k\ge 2$,
a constant $\alpha\in (0,1)$  and a positive constant
$C$, let $\mathcal{S}_{J, k,\alpha, C}$ be the collection of Alexandrov-Nirenberg surfaces $\Sigma$
of class $C^{k+3, \alpha}$, with
$J$ connected components in $\partial\Sigma$, such that
$$|g|_{C^{k+2, \alpha}(\bar\Sigma)}+ \max_{\partial \Sigma}\frac
1{|\nabla K|}+ \max_{\partial \Sigma}\frac 1{|k_g|}\le C,$$
where $g$ is the induced metric on $\Sigma$, $K$ is the Gauss curvature of $\Sigma$ and
$k_g$ is the geodesic curvature of $\partial\Sigma$. Then, $\mathcal{S}_{J, k,\alpha,C}$ is compact
in the $C^k$-topology.
\end{theorem}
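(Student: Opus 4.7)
The plan is to reduce the theorem to two separate problems: convergence of the intrinsic metrics, which will follow from Arzel\`a--Ascoli, and uniform a priori $C^{k+2,\alpha}$ bounds for the isometric embeddings in terms of the intrinsic data. Once both are in hand, a subsequence of the surfaces converges in $C^{k+2}$, hence a fortiori in $C^k$, and a routine check shows the limit lies in $\mathcal{S}_{J,k,\alpha,C}$.

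I would start from a sequence $\{\Sigma_n\}\subset\mathcal{S}_{J,k,\alpha,C}$. The Gauss--Bonnet formula
\[
2\pi\chi(\Sigma_n)=\int_{\Sigma_n}K\,dg_n+\int_{\partial\Sigma_n}k_g\,ds=4\pi+\int_{\partial\Sigma_n}k_g\,ds,
\]
together with the uniform pointwise bound on $|k_g|$ and the uniform bound on the length of $\partial\Sigma_n$ implicit in $|g|_{C^{k+2,\alpha}}\le C$, bounds $\chi(\Sigma_n)$. Combined with the fixed number $J$ of boundary components this forces the $\Sigma_n$ to lie in only finitely many diffeomorphism classes, so passing to a subsequence I may identify all of them with a single reference manifold $M$. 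Arzel\`a--Ascoli applied to the pulled-back metrics then yields a further subsequence with $g_n\to g_\infty$ in $C^{k+2}(M)$, where $g_\infty\in C^{k+2,\alpha}(M)$ still satisfies \eqref{eq-ANCondition} and the boundary non-degeneracy $|\nabla K_\infty|,|k_{g,\infty}|\ge 1/C$ on $\partial M$.

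The central task is to upgrade this intrinsic convergence to convergence of the embeddings. After normalizing by rigid motions of $\mathbb R^3$ (fixing the image of a base point and a tangent frame), the embeddings $X_n\colon M\to\mathbb R^3$ are uniquely determined, and it suffices to prove uniform $C^{k+2,\alpha}$ bounds on $\{X_n\}$. Where $K_n\ge\delta>0$ in the interior, this is classical: Nirenberg's a priori estimate for the Weyl problem (cf.\ \cite{N1}) yields $C^{k+2,\alpha}$ control of $X_n$ from $C^{k+2,\alpha}$ control of $g_n$ and a positive lower bound on $K_n$. The entire difficulty is concentrated in a neighbourhood of $\partial\Sigma_n$, where the Gauss curvature degenerates to zero.

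The main obstacle is therefore the boundary regularity. By Theorem A each component of $\partial\Sigma_n$ lies in a plane tangent to $\Sigma_n$, so in adapted coordinates near $\partial\Sigma_n$ the surface is the graph of a nonnegative function $u_n$ vanishing together with its gradient on a smooth convex planar curve $\gamma_n$, and $u_n$ satisfies the Monge--Amp\`ere equation
\[
\det D^2 u_n=K_n\bigl(1+|Du_n|^2\bigr)^2.
\]
The hypothesis $|\nabla K|\ge 1/C$ forces $K_n$ to vanish linearly and transversally at $\gamma_n$, while $|k_g|\ge 1/C$ provides a uniform lower bound on the principal curvature of $\gamma_n$ and so prevents the boundary curves from flattening in the limit. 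For this step I would construct explicit barriers by comparison with solutions of a model degenerate Monge--Amp\`ere problem to obtain $C^{1,1}$ control of $u_n$ up to $\gamma_n$; then linearize and apply Schauder-type estimates for the resulting degenerate elliptic operator, whose coefficients vanish linearly along $\gamma_n$, to bootstrap to $C^{k+2,\alpha}$ bounds up to the boundary. Once these estimates are established, Arzel\`a--Ascoli produces $X_n\to X_\infty$ in $C^{k+2}$, and verifying $X_\infty(M)\in\mathcal S_{J,k,\alpha,C}$ is immediate from the $C^{k+2}$ convergence of $g_n$, $K_n$, $k_{g,n}$ and $\nabla K_n$.
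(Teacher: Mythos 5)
Your overall framework (fix the topology, extract a convergent subsequence of metrics, and reduce everything to uniform a priori estimates for the embeddings, with the interior handled by Nirenberg-type estimates) matches the paper's. The genuine gap is in the one step where all the difficulty is concentrated: the boundary estimate where $K$ vanishes. You propose to get $C^{1,1}$ control of the graph function up to the boundary curve ``by comparison with solutions of a model degenerate Monge--Amp\`ere problem,'' i.e.\ by barriers, and then to bootstrap with Schauder estimates for the linearized degenerate operator. But by Theorem A(2) the geodesic curvature of $\partial\Sigma$ is \emph{negative}, and this makes the linearized equation \emph{characteristically} degenerate at the boundary: in geodesic coordinates the relevant operator has the Keldysh-type form $t\partial_t^2+a_{11}\partial_s^2+b_2\partial_t+\cdots$ with $b_2>2$ at $t=0$ (see \eqref{502}), so the coefficient of the normal-normal second derivative vanishes on the boundary and the lower-order term $b_2\partial_t$ has the ``wrong'' sign for comparison functions of the form $\pm Ct$. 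The usual barrier argument for the normal difference quotient fails here --- this is precisely the obstruction the paper singles out as its main difficulty --- so your plan for obtaining Lipschitz (let alone $C^{1,1}$) control in the normal direction does not go through. Barriers do work in the non-characteristic case $k_g>0$ treated by Hong, which may be the source of the confusion.

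What replaces the barrier step in the paper is a three-stage argument on the second fundamental form in geodesic coordinates: an $L^\infty$ bound by the maximum principle (Lemmas \ref{Remark1.13} and \ref{Lemma1.17}), a boundary H\"older estimate by a de Giorgi iteration carried out in weighted Sobolev spaces adapted to the degeneracy $tu_t^2+u_s^2$ (Section \ref{sec-BoundaryHolder}), and finally the sharp Lipschitz bounds $|N(s,t)-N(s,0)|\le Ct$ and $|M|\le Ct$ by a blowup/compactness argument whose rigidity input is a Liouville-type statement for the model equation $yw_{yy}+cw_{xx}+3w_y=0$ (Section \ref{sec-BoundaryLipschitz}). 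Only after these Lipschitz bounds are in hand can one invoke the $L^p$ and Schauder theory for characteristically degenerate operators from \cite{HH} to bootstrap to higher order, which is the part of your plan that is sound. As written, your proposal defers the entire content of Sections \ref{sec-BoundsMeanCurv}--\ref{sec-BoundaryLipschitz} to a barrier construction that is not available in this regime, so the central step is missing.
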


We note that $\nabla K$ does not vanish on $\partial\Sigma$ by \eqref{eq-ANCondition}
and that $k_g$ does not vanish on $\partial\Sigma$ by Theorem A(2).

Theorem \ref{them-main} is based on a priori estimates of the $C^{k, \alpha}$-norms of the position
vectors of $\Sigma$ in $\mathbb R^3$ or its associated second
fundamental form.
Difficulties in deriving such estimates arise from the condition that
curvature $K=0$ on $\partial \Sigma$. As is well known, vanishing
Gauss curvature results in degeneracy of the associated
nonlinear elliptic equations. In \cite{H1} and \cite{HHW}, Hong
studied the case where $\partial\Sigma$ consists of one connected component
and the geodesic curvature $k_g$ of $\partial \Sigma$ is positive
everywhere. For more bibliography, see \cite{HQH}. However, in our
present case, $k_g<0$ on $\partial \Sigma$ by Theorem A(2). From an
analytic point of view, the associated elliptic equation is
non-characteristically degenerate on $\partial \Sigma$ if $k_g>0$ on
$\partial \Sigma$ and is characteristically degenerate if $k_g<0$.
%The former corresponds to a Tricomi-type equation and the latter a Keldys-type.
The latter is presumably more difficult to study than the former.
For the characteristically degenerate elliptic equations in this
paper, the usual barrier arguments do not seem to work for the
estimate of the difference-quotient along the normal to the
degenerate boundary, although derivatives of solutions on
the boundary can be solved from the equation. This is  the major
difficulty we encounter in the present paper.

To prove Theorem \ref{them-main},  we need to derive a priori
estimates of the second fundamental forms.
The crucial part is the estimate of the boundary Lipschitz norm.
We achieve this in three successive steps:

Step 1. Estimate the $L^\infty$-norm by the maximum principle;

Step 2. Estimate the boundary H\"{o}lder norm by de Giorgi
iteration;

Step 3. Estimate the boundary Lipschitz norm by blowup
arguments.

The method used in Step 2 and Step 3
is of independent interest.

After deriving the boundary Lipschitz norm of the second fundamental
form, we obtain estimates of the boundary higher order
norm by results in \cite {HH} on $L^p$ and H\"{o}lder
boundary estimates for a class of characteristically degenerate elliptic equations.

The paper is organized as follows.
In Section \ref{sec-InteriorEstimates},
we derive a global upper bound of the mean curvature and an interior estimate  of
higher order derivatives of position vectors for Alexandrov-Nirenberg surfaces.
In Section \ref{sec-GaussCodazziEqs}, we derive some important equations
in geodesic coordinates near boundary.
In Section \ref{sec-BoundsMeanCurv}, we derive upper and lower bounds of the mean curvature by
the maximum principle. We derive boundary H\"{o}lder norms of the second fundamental form
by de Giorgi iteration in Section \ref{sec-BoundaryHolder} and boundary Lipschitz norms by blowup
arguments in Section \ref{sec-BoundaryLipschitz}. In Section \ref{sec-HigherOrderEstimates}, we
provide an estimate of higher order derivatives of the second fundamental form
and prove Theorem \ref{them-main}. Section
\ref{sec-Appendix} is an appendix, where we reformulate results in \cite {HH}
for our applications.

\section{Interior Estimates}\label{sec-InteriorEstimates}

In this section, we derive a global upper bound of the mean curvature and an interior estimate
of higher order derivatives of position vectors for Alexandrov-Nirenberg surfaces.

Suppose $\Sigma$ is an Alexandrov-Nirenberg surface as introduced in
Definition \ref{dfn-ANSurfaces}.
By Theorem A, $\partial\Sigma$ consists of finitely many planar
convex curves. Let $\sigma$ be a connected component in $\partial\Sigma$.
Without loss of generality, we assume that, in the
geodesic coordinates with the base curve
$\sigma,$ the induced metric $g$ is  of the form
\begin{equation}\label{geod1}g=B^2ds^2+dt^2\quad\text{for any } (s,t)
\in [0 ,2\pi ]\times [0,1],\end{equation}
where
$B$ is a positive
function in $[0 ,2\pi ]\times [0,1]$ satisfying
\begin{equation}\label{geod2}B(\cdot,0)=1,\quad
B_t(\cdot,0)=-k_g.\end{equation}
Here, $t=0$ corresponds to the boundary curve $\sigma$ and the
negative sign in $B_t$ indicates that
the geodesic curvature of $\sigma$ is calculated with respect
to the anticlockwise orientation.
Obviously, we have $B_t>0$ on
$\sigma$.
Furthermore, we assume, by a scaling in $t$ if necessary, that
$$B_t>0\ \text { for all }t\in
[0,1].$$
Throughout the paper, we adopt the notion
$(\partial_s,\partial_t)=(\partial_1,\partial_2)$. It is easy to
calculate
\begin{align*}
&\Gamma^1_{11}=\frac {B_s}B,\quad\ \ \ \Gamma^1_{12}=\frac {B_t}B,\
\ \Gamma^1_{22}=0,\\
\quad &\Gamma^2_{ 11}=-BB_t,\ \ \Gamma^2_{12}=0, \ \ \ \
\Gamma^2_{22}=0.\end{align*} The Gauss-Codazzi equations are given by
\begin{align}\label{GC1}L_t-M_s&=\frac{B_t}{B}L-\frac{B_s}{B}M+BB_tN,\\
\label{GC2}M_t-N_s&=-\frac{B_t}{B}M,\end{align}
and
\begin{equation}\label{GC3}NL-M^2=KB^2.\end{equation} Note that the
mean curvature $H$
is given by
\begin{equation}\label{mean}H=\frac12\left(\frac{L}{B^2}+N\right).
\end{equation}

\begin{lemma}\label{Lemma1.1} Let $\Sigma$ be an Alexandrov-Nirenberg surface
in $\mathbb R^3$ of class $C^4$
and $\sigma$ be a connected component in $\partial\Sigma$.
Then, in the geodesic
coordinates as in \eqref{geod1} and \eqref{geod2},
$$L=M=0,\quad N=\sqrt {\frac {K_t}{B_t}}\quad\text{on }t=0,$$
and
$$L_t=\sqrt {K_tB_t}\quad\text{on }t=0.$$
\end{lemma}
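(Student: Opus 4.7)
The plan is to exploit Theorem A(1) to get strong geometric control along $\sigma$, and then extract algebraic identities from the Gauss and Codazzi equations and their first derivatives at $t=0$.

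First I would show that the unit normal $\nu$ to $\Sigma$ is constant along $\sigma$. By Theorem A(1), the plane $P$ containing $\sigma$ is tangent to $\Sigma$ at every point of $\sigma$. Since the tangent plane does not vary along $\sigma$, the unit normal $\nu$ is a fixed vector in $\mathbb R^3$ on $\{t=0\}$, so $\partial_s\nu=0$ there. Differentiating $\langle r_i,\nu\rangle=0$ gives $h_{ij}=-\langle r_i,\nu_j\rangle$, and in particular
\begin{equation*}
L=-\langle r_s,\nu_s\rangle=0,\qquad M=-\langle r_t,\nu_s\rangle=0\quad\text{on }\{t=0\}.
\end{equation*}

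Next I would pin down $N$ and $L_t$ by combining two identities on $\{t=0\}$. Differentiating the Gauss equation \eqref{GC3} in $t$ gives $N_tL+NL_t-2MM_t=K_tB^2+2KBB_t$, which at $t=0$ (where $L=M=K=0$ and $B=1$) collapses to $NL_t=K_t$. At the same time, the Codazzi equation \eqref{GC1} at $t=0$, using $L=M=0$, $B=1$, and $M_s=0$ (since $M\equiv 0$ along $\sigma$), reduces to $L_t=B_tN$. Substituting the second into the first yields $B_tN^2=K_t$, hence
\begin{equation*}
N^2=\frac{K_t}{B_t}\quad\text{on }\{t=0\}.
\end{equation*}

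Finally I would fix the sign of $N$. The denominator $B_t$ is positive on $\{t=0\}$ by the setup in \eqref{geod2} together with Theorem A(2), and $K_t>0$ on $\{t=0\}$ because $K>0$ in $\Sigma$, $K=0$ on $\partial\Sigma$, and $\nabla K\neq 0$ there. Since $L=M=0$ on $\sigma$, the two principal curvatures of $\Sigma$ at each point of $\sigma$ are $0$ and $N$; with the orientation for which $K>0$ in the interior, the nonzero principal curvature must arise as the limit of a positive principal curvature from $t>0$, forcing $N>0$. Thus $N=\sqrt{K_t/B_t}$ on $\{t=0\}$, and then $L_t=B_tN=\sqrt{K_tB_t}$ on $\{t=0\}$.

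The only delicate point is the sign determination of $N$ at the end; everything else is a direct substitution of the boundary conditions $L=M=K=0$, $B=1$ into the Gauss equation, its $t$-derivative, and the first Codazzi equation \eqref{GC1}.
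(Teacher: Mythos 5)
Your proposal is correct and follows essentially the same route as the paper: the paper also deduces $L=M=0$ from the constancy of the normal along $\sigma$ (via Theorem A(1)) and then solves the $2\times 2$ system $L_t=NB_t$, $NL_t=K_t$ obtained from \eqref{GC1} and the $t$-derivative of \eqref{GC3} at $t=0$. Your extra care in fixing the positive root for $N$ is a welcome elaboration of a point the paper leaves implicit.
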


\begin{proof} We first note $K=0$ and $L=M=0$ as $t=0$ since the normal
to $\Sigma$ is constant along
$\sigma$. By (\ref{GC1}) and $t$-differentiation of (\ref{GC3}),
we have
$$L_t=NB_t, \quad NL_t=K_t\quad \text{at }t=0.$$
Solving the above system yields expressions of $N$ and $L_t$ on $t=0$. \end{proof}

By Lemma \ref{Lemma1.1}, $L$, $M$, $N$ and $L_t$ are
intrinsically determined on $\sigma$.

\begin{cor}\label{Cor1.2} Let $k$ be a nonnegative integer,
$\Sigma$ be an Alexandrov-Nirenberg surface
in $\mathbb R^3$ of class $C^{k+4}$
and $\sigma$ be a connected component in $\partial\Sigma$.
Then, in the geodesic
coordinates as in \eqref{geod1} and \eqref{geod2},
all $k$-derivatives of $M$ and $N$ and
$(k+1)$-derivatives of $L$ are intrinsically determined on
$\sigma$.
\end{cor}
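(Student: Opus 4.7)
I would prove this by induction on $k$, organizing the boundary data as the Taylor coefficients in $t$:
\[
L_n(s) := \partial_t^n L\big|_{t=0}, \quad M_n(s) := \partial_t^n M\big|_{t=0}, \quad N_n(s) := \partial_t^n N\big|_{t=0}.
\]
Since $s$-differentiation preserves intrinsicness, the corollary at level $k$ is equivalent to the assertion that $L_0, \ldots, L_{k+1}$, $M_0, \ldots, M_k$, and $N_0, \ldots, N_k$ are all intrinsically determined by the metric $g$.

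The base case $k = 0$ is Lemma \ref{Lemma1.1}, which gives $L_0 = M_0 = 0$, $N_0 = \sqrt{K_t/B_t}\big|_{t=0}$, and $L_1 = \sqrt{K_tB_t}\big|_{t=0}$; in addition, (\ref{GC2}) at $t=0$ gives $M_1 = \partial_s N_0$. For the inductive step, assume all Taylor coefficients up to level $k-1$ are intrinsic; we show the same at level $k$.

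First, $M_k$ is obtained by applying $\partial_t^{k-1}$ to (\ref{GC2}) at $t=0$: this yields $M_k = \partial_s N_{k-1}$ plus an intrinsic combination of lower $M$'s and derivatives of $B$, all known by the inductive hypothesis. Next, $N_k$ and $L_{k+1}$ are determined simultaneously from a coupled pair of equations. Applying $\partial_t^{k+1}$ to (\ref{GC3}) and evaluating at $t=0$, the vanishing of $L_0$, $M_0$, and $K|_{t=0}$ kills most Leibniz terms, and the ones involving $M_k$ are intrinsic by the step just performed; what remains is a relation
\[
N_0\, L_{k+1} + (k+1)\, L_1\, N_k = \mathcal{I}_A,
\]
with $\mathcal{I}_A$ intrinsic. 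Similarly, applying $\partial_t^k$ to (\ref{GC1}) at $t=0$ yields
\[
L_{k+1} - (BB_t)\big|_{t=0}\, N_k = \mathcal{I}_B,
\]
with $\mathcal{I}_B$ intrinsic (the $M_s$ term contributes $\partial_s M_k$, already known). Eliminating $L_{k+1}$ and using the base-case identity $L_1 = (BB_t)|_{t=0}\, N_0$ from Lemma \ref{Lemma1.1} collapses the coefficient of $N_k$ to $(k+2)\,L_1$. Since $L_1 = \sqrt{K_tB_t}|_{t=0} \neq 0$ by the hypothesis $\nabla K \neq 0$ on $\partial\Sigma$ (so $K_t \neq 0$) and $B_t > 0$, the system is uniquely solvable, giving $N_k$ and then $L_{k+1}$ as intrinsic functions of $s$. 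Applying $\partial_s^i$ recovers all mixed derivatives claimed by the corollary.

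The main obstacle to anticipate is the apparent degeneracy of (\ref{GC3}) on $\sigma$: because $L|_\sigma = 0$, Gauss's equation is characteristic along $\sigma$, so neither (\ref{GC3}) alone nor (\ref{GC1}) alone determines $N_k$ or $L_{k+1}$. The compatibility identity $L_1 = BB_tN_0$ of Lemma \ref{Lemma1.1} is precisely what makes the combined $2 \times 2$ system non-degenerate, with determinant $(k+2)L_1 \neq 0$; I expect the combinatorial bookkeeping of the Leibniz expansions in $\mathcal{I}_A$ and $\mathcal{I}_B$ to be tedious but routine.
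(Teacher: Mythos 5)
Your proposal is correct and follows essentially the same route as the paper: induction on the order of differentiation, reducing to pure $t$-derivatives, reading off $\partial_t^kM$ from the Codazzi equation \eqref{GC2}, and then solving for $\partial_t^kN$ and $\partial_t^{k+1}L$ from the $2\times 2$ linear system obtained by $t$-differentiating \eqref{GC1} and \eqref{GC3}, whose determinant is a nonzero multiple of $\sqrt{K_tB_t}$. The only differences are presentational: the paper verifies nonsingularity of the coefficient matrix directly instead of eliminating $L_{k+1}$, and your Leibniz coefficient $(k+1)L_1$ for the $\partial_t^kN$ term (hence determinant $(k+2)L_1$) is in fact the accurate count, correcting a harmless off-by-one in the paper's displayed matrix.
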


\begin{proof} We will prove by induction that, for any integer $m= 0, 1, \cdots, k$,
$$\partial^mL, \partial^mM, \partial^mN\text{ and }\partial^{m+1}_tL
\text{ are intrinsically determined on }\sigma.$$ Here
$\partial^m$ denotes all derivatives of order $m$. Note that $m=0$
corresponds to Lemma \ref{Lemma1.1}. We assume it holds for $m-1$
and consider $m$ for some $m\ge 1$. Since pure $s$-derivatives of
$\partial^{m-1}L, \partial^{m-1}M, \partial^{m-1}N$ and $\partial^{m}_tL$
are intrinsically determined on
$\sigma$, we consider only $\partial^{m+1}_tL$, $\partial_t^mM$
and $\partial_t^mN$. First,  $\partial^m_tM$ is intrinsically
determined on $\sigma$ by (\ref{GC2}). Next,  a differentiation
of (\ref{GC1}) and (\ref{GC3}) with respect to $t$ of appropriate
order yields
$$\partial^{m+1}_{t}L-BB_t\partial^m_tN=\cdots, \quad
N\partial^{m+1}_{t}L+m\partial_tL\partial^m_tN=\cdots,$$ where
$\cdots$ indicates expressions intrinsically determined on $\sigma$.
Here we used $L=M=0$ on $\sigma$. The coefficient matrix at
$t=0$ is given by
$$\left(\begin{matrix} 1& -B_t\\
\sqrt{\frac{K_t}{B_t}}&m\sqrt{K_tB_t}\end{matrix}\right),$$ which is
nonsigular. Hence, $\partial^{m+1}_{t}L$ and $\partial^m_tN$ are
intrinsically determined on $\sigma$. \end{proof}

\begin{cor}\label{Cor1.3} Let $k$ be a nonnegative integer,
$\Sigma$ be an Alexandrov-Nirenberg surface
in $\mathbb R^3$ of class $C^{k+4}$
and $\sigma$ be a connected component in $\partial\Sigma$.
Then, in the geodesic
coordinates as in \eqref{geod1} and \eqref{geod2},
all $k$-derivatives of the mean curvature $H$ are intrinsically determined on
$\sigma$.
\end{cor}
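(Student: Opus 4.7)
The plan is to reduce the statement directly to Corollary~\ref{Cor1.2} via the mean curvature formula \eqref{mean}, $H = \frac{1}{2}(L/B^2 + N)$. The key point is that only $L$, $N$, and the metric coefficient $B$ appear in this identity, and on $\sigma$ each of them has the derivatives we need already controlled intrinsically.

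Adopting the paper's convention that $\partial^m$ denotes an arbitrary partial derivative of total order $m$, fix $m \leq k$ and apply $\partial^m$ to both sides of \eqref{mean}. The term $\partial^m N$ is intrinsically determined on $\sigma$ by Corollary~\ref{Cor1.2}. For the remaining term $\partial^m(L B^{-2})$ I would expand via Leibniz as a finite sum of products $\partial^j L \cdot \partial^{m-j}(B^{-2})$ with $0 \leq j \leq m$. Every factor $\partial^j L$ has order $j \leq k$ and is therefore intrinsic by Corollary~\ref{Cor1.2}, while each factor $\partial^{m-j}(B^{-2})$ is a rational expression in derivatives of $B$ of order at most $k$ divided by powers of $B$, hence intrinsic because $B$ is part of the prescribed metric. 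Combining these observations, $\partial^m H$ on $\sigma$ is a fixed polynomial expression in quantities all of which are intrinsically determined, which yields the claim.

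There is no genuine obstacle here; this corollary is essentially a bookkeeping consequence of Corollary~\ref{Cor1.2}. Worth remarking is that only $k$-th order information on $L$ is actually used, so the strictly stronger $(k+1)$-derivative conclusion for $L$ recorded in Corollary~\ref{Cor1.2} is not invoked in this step. That extra order on $L$ arose as a byproduct of the nonsingular $2\times 2$ system inductively solved in the proof of Corollary~\ref{Cor1.2}, and will play its role elsewhere in the paper where higher derivatives of the second fundamental form are needed.
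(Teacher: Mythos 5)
Your argument is correct and is precisely the intended one: the paper omits the proof of this corollary because it follows immediately from \eqref{mean} together with Corollary~\ref{Cor1.2}, exactly as you describe. Your side remark that only the $k$-th order information on $L$ (not the extra $(k+1)$-st derivative) is needed here is also accurate.
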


%We recall the set up at the beginning of Section \ref{sec-BoundsMeanCurv}.
Next, for the  Alexandrov-Nirenberg
surface $\Sigma$ in $\mathbb R^3$,
we assume by Theorem A(1) that
$\partial \Sigma$ consists of $J$  planar
convex curves. Hence, $\Sigma$ and the planar convex regions
enclosed by these curves form a convex surface $\widetilde\Sigma$ in $\mathbb R^3$.
We first have the following result.

\begin{lemma}\label{inball}
There exists a ball of radius $R_0$ inside $\widetilde \Sigma$, where
$R_0$ is a positive constant depending only on $1/\max
K$ and the intrinsic diameter $l$ of $\Sigma$.
\end{lemma}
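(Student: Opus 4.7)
The plan is to derive a quantitative lower bound on the inradius of $\widetilde{\Sigma}$ by exploiting the Gauss map together with the total curvature identity $\int_{\Sigma} K\,dg = 4\pi$.

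First, I would observe that the extrinsic diameter of $\widetilde{\Sigma}$ is at most $l$. Indeed, every extreme point of the convex body bounded by $\widetilde{\Sigma}$ lies on $\Sigma$, since no interior point of a flat disk of $\widetilde{\Sigma}\setminus\Sigma$ is extreme; thus the extrinsic diameter of $\widetilde{\Sigma}$ equals that of $\Sigma$, which is $\leq l$. Let $w$ denote the minimum width of $\widetilde{\Sigma}$; by a standard dimensional inequality (e.g.\ via the John ellipsoid), the inradius satisfies $r \geq cw$ for an absolute constant $c > 0$, so it suffices to bound $w$ from below.

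Let $u$ realize the minimum width, so $\widetilde{\Sigma}$ lies inside a slab of width $w$ perpendicular to $u$ whose horizontal projection has perimeter $\leq \pi l$. Consider the equatorial belt on the smooth part,
$$\Sigma_{\mathrm{eq}} := \{\,p \in \Sigma : |\nu(p) \cdot u| \leq 1/2\,\}.$$
Since the Gauss map $\nu : \Sigma \to \mathbb{S}^2$ is a.e.\ a diffeomorphism onto $\mathbb{S}^2$ minus the finitely many disk-normal directions, the Gauss image of $\Sigma_{\mathrm{eq}}$ is (up to a null set) the spherical belt $\{\xi \in \mathbb S^2 : |\xi\cdot u|\leq 1/2\}$ of area $2\pi$. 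The pointwise bound $K \leq \max K$ then gives
$$\operatorname{Area}(\Sigma_{\mathrm{eq}}) \;\geq\; \frac{2\pi}{\max K}.$$
For the complementary upper bound, parameterize $\widetilde{\Sigma}$ about a vertical axis through the body as $p(\theta,z) = (\rho(\theta,z)\cos\theta,\rho(\theta,z)\sin\theta,z)$, with area element $\sqrt{\rho^2 + \rho_\theta^2 + \rho^2\rho_z^2}\,d\theta\,dz$. The tilt condition $|\nu\cdot u|\leq 1/2$ forces $\rho^2\rho_z^2 \leq \tfrac13(\rho^2 + \rho_\theta^2)$, so the area element is controlled by $\tfrac{2}{\sqrt 3}(\rho + |\rho_\theta|)$. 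Using $\rho \leq l$, the $z$-range of the belt $\leq w$, and the total-variation estimate $\int_0^{2\pi}|\rho_\theta|\,d\theta \leq Cl$ for convex cross-sections, one concludes $\operatorname{Area}(\Sigma_{\mathrm{eq}}) \leq C'lw$. Combining the two bounds gives $w \geq c_0/(l\max K)$, and thus $r \geq R_0 := c_1/(l\max K)$.

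The main obstacle is the upper area estimate, since tangent planes in $\Sigma_{\mathrm{eq}}$ can be arbitrarily steep and naive surface-area bounds blow up via the vertical-tilt factor. The key is the implicit constraint from the slab containment, encoded in $\rho_z^2 \leq \tfrac13(1 + \rho_\theta^2/\rho^2)$, which controls the Jacobian by $\rho + |\rho_\theta|$ and prevents such blow-up.
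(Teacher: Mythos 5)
The paper gives no proof of this lemma; it only cites \cite{CY} and \cite{HQH}. Your argument is essentially the standard one from those sources: a Gauss-map lower bound on the area of an equatorial belt, an upper bound on that area in terms of the minimal width $w$, and a conversion from width to inradius. The main steps are sound. The extrinsic diameter is controlled by $l$ because the extreme points of the convex body lie in $\bar\Sigma$. The Gauss map of the convex body, restricted to $\Sigma$, covers $\mathbb S^2$ minus the $J$ disk normals (a supporting plane with any other normal must touch the body at a point of $\Sigma$, since the flat disks are tangent to $\Sigma$ along $\partial\Sigma$), so $\int_{\Sigma_{\mathrm{eq}}}K\,dg\ge 2\pi$ and $\operatorname{Area}(\Sigma_{\mathrm{eq}})\ge 2\pi/\max K$; only surjectivity, not injectivity, of the Gauss map is needed here. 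John's ellipsoid (or Steinhagen's inequality) then converts $w\ge c_0/(l\max K)$ into the inradius bound.

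The one step that needs repair is the upper bound $\operatorname{Area}(\Sigma_{\mathrm{eq}})\le C\,lw$. The global cylindrical parameterization $p(\theta,z)$ is not defined on all of $\Sigma_{\mathrm{eq}}$: for heights $z$ at which the axis point $(0,0,z)$ lies outside the slice $\widetilde\Sigma\cap\{x\cdot u=z\}$ (which can occur for a range of $z$ near the top and bottom of the body), the radial function $\rho(\cdot,z)$ is multivalued or undefined, and your area formula does not apply as written. The estimate itself is correct and is obtained cleanly by the coarea formula: on $\Sigma_{\mathrm{eq}}$ one has $|\nabla_\Sigma(x\cdot u)|=\sqrt{1-(\nu\cdot u)^2}\ge \sqrt3/2$, and for a.e.\ $z$ the slice $\Sigma_{\mathrm{eq}}\cap\{x\cdot u=z\}$ lies on the relative boundary of the planar convex set $\widetilde\Sigma\cap\{x\cdot u=z\}$ (a boundary point of $\widetilde\Sigma$ in the relative interior of a slice has normal $\pm u$, hence is not in the belt), a convex curve of length at most $\pi l$; integrating over $z$ in an interval of length $w$ gives $\operatorname{Area}(\Sigma_{\mathrm{eq}})\le \tfrac{2\pi}{\sqrt3}\,lw$. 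With that substitution your proof is complete and yields $R_0\ge c/(l\max K)$, which depends only on $1/\max K$ and $l$, as required.
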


For a proof, refer to \cite {CY}  or
\cite{HQH} p196.
In the following, we
always take the origin as the center of this ball.

We fix a bounded domain $D\subset \mathbb R^2$ with $J$ connected components in
$\partial D$. Then, the induced metric $g$
of $\bar \Sigma$ can be viewed as a metric in $\bar D$ and $\bar \Sigma$
an isometric embedding of $(\bar D, g)$.
Let ${\bf r}$ be the position vector of $\Sigma$ in $\mathbb R^3$ and set
$$\rho=-\frac12|{\bf r}|^2.$$
We always regard ${\bf r}$ and all related
functions as defined in $\bar D$.

\begin{lemma}\label{Lemma1.10} There hold
\begin{equation}\label{eq-Position1}Kh^{ij}\rho_{ij}=-2H-2K{\bf r\cdot n},\end{equation}
and
\begin{equation}\label{eq-Position2}({\bf r\cdot n})^2+|\nabla \rho|^2=|{\bf r}|^2.\end{equation}
\end{lemma}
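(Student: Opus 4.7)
The plan is to compute the covariant Hessian of $\rho$ directly by differentiating $\rho_i=-{\bf r}\cdot{\bf r}_i$ and invoking the Gauss formula ${\bf r}_{ij}=\Gamma^k_{ij}{\bf r}_k+h_{ij}{\bf n}$. Expanding gives
\[
\partial_i\partial_j\rho=-g_{ij}-\Gamma^k_{ij}{\bf r}\cdot{\bf r}_k-h_{ij}({\bf r}\cdot{\bf n}).
\]
The Christoffel contribution is exactly $\Gamma^k_{ij}\partial_k\rho$, so it cancels when one forms the covariant Hessian, yielding the clean identity
\[
\rho_{ij}=-g_{ij}-h_{ij}({\bf r}\cdot{\bf n}).
\]

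For \eqref{eq-Position1} I would then contract with $h^{ij}$, the inverse of $h_{ij}$. The term $h^{ij}h_{ij}({\bf r}\cdot{\bf n})$ contributes $2({\bf r}\cdot{\bf n})$. For the remaining $h^{ij}g_{ij}$ I would use the two-dimensional cofactor identity
\[
h^{ij}g_{ij}=\frac{2H}{K},
\]
which follows from the explicit $2\times 2$ cofactor formulas together with $2H=g^{ij}h_{ij}$ and $K=\det(h_{ij})/\det(g_{ij})$. Multiplying through by $K$ then produces $Kh^{ij}\rho_{ij}=-2H-2K({\bf r}\cdot{\bf n})$, as desired.

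For \eqref{eq-Position2} I would use the orthogonal decomposition of the position vector relative to the tangent plane of $\Sigma$. Writing ${\bf r}=a^k{\bf r}_k+({\bf r}\cdot{\bf n}){\bf n}$ with $a^k=g^{kl}({\bf r}\cdot{\bf r}_l)$, the Pythagorean theorem in $\mathbb R^3$ gives
\[
|{\bf r}|^2=g_{kl}a^ka^l+({\bf r}\cdot{\bf n})^2=g^{kl}({\bf r}\cdot{\bf r}_k)({\bf r}\cdot{\bf r}_l)+({\bf r}\cdot{\bf n})^2.
\]
Since $\rho_k=-{\bf r}\cdot{\bf r}_k$, the first sum equals $g^{kl}\rho_k\rho_l=|\nabla\rho|^2$, which finishes the second identity.

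Neither computation is hard; the only step where a little care is required is the dimension-two cofactor identity $h^{ij}g_{ij}=2H/K$ used in the first part, which must be invoked in place of the more familiar relation $g^{ij}h_{ij}=2H$. Everything else is a direct consequence of the Gauss formula and the splitting $\mathbb R^3=T\Sigma\oplus\mathbb R{\bf n}$.
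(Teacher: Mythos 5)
Your proof is correct and follows essentially the same route as the paper: the paper computes $\rho_{ij}=-\delta_{ij}-h_{ij}\,{\bf r}\cdot{\bf n}$ in orthonormal coordinates at a point (where $Kh^{ij}\delta_{ij}=2H$ is exactly your cofactor identity) and obtains \eqref{eq-Position2} from $|{\bf r}|^2-({\bf r}\cdot{\bf n})^2=|{\bf r}\times{\bf n}|^2$ via the triple-product expansion, which is just your tangential--normal decomposition in cross-product form. Both of your variations are sound and nothing is missing.
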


\begin{proof} For any $p\in D$, we take the orthonormal coordinates at $p$.
Then a straightforward calculation yields
\begin{equation}\label{1.1}\rho_{ij}=-{\bf r}_i\cdot {\bf r}_j-{\bf r}_{ij}\cdot
{\bf r}=-\delta_{ij}-h_{ij}{\bf r\cdot n},\end{equation} and
$$Kh^{ij}\rho_{ij}=-Kh^{ij}\delta_{ij}-Kh^{ij}h_{ij}{\bf r\cdot n}=-2H-2K{\bf r\cdot n}.$$
This proves the first part. As for the second part, we have
\begin{align*}|{\bf r}|^2-({\bf r\cdot n})^2&=|{\bf r\times
n}|^2=\left|{\bf r}\times \frac{{\bf r}_1\times {\bf r}_2}{|{\bf
r}_1\times
{\bf r}_2|}\right|^2\\
&=\left|\frac{({\bf r\cdot r}_1){\bf r}_2-({\bf r\cdot r}_2){\bf
r}_1}{|{\bf r}_1\times {\bf
r}_2|}\right|^2=g^{ij}\rho_i\rho_j.\end{align*} This finishes the
proof.\end{proof}

Now we prove an upper bound of the mean curvature.

\begin{lemma}\label{Lemma1.11} Let $\Sigma$ be an Alexandrov-Nirenberg
surface in $\mathbb R^3$ of class $C^5$. Then,
$$H\le C\left\{\max_{\partial \Sigma}\sqrt {\frac {|\nabla K|}{|k_g|}}
+\max_{\Sigma}K +\max_{\Sigma}\sqrt {|\Delta K|}\right\},$$ where $C$ is a
positive constant depending only on the intrinsic diameter of
$\Sigma$.
\end{lemma}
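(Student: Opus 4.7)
\emph{Strategy and boundary contribution.} The plan is to apply the maximum principle to the auxiliary function $\Phi := H - A\rho$ on $\bar\Sigma$, where $\rho = -\tfrac12|\mathbf{r}|^2$ and $A>0$ is a parameter to be chosen proportional to $l^{-1}\sqrt{(\max K)^2+\max|\Delta K|}$, using the degenerate-elliptic Monge--Amp\`ere-type operator $\mathcal{L}u:=Kh^{ij}u_{ij}$. This operator is elliptic on the interior of $\Sigma$ (where $K>0$) and degenerates on $\partial\Sigma$. The boundary contribution is immediate: by Lemma~\ref{Lemma1.1}, together with \eqref{mean} and the relation $B_t(\cdot,0) = -k_g$, one has
\[ H|_{\partial\Sigma} = \tfrac12 N = \tfrac12\sqrt{K_t/B_t} \le \tfrac12\sqrt{|\nabla K|/|k_g|}, \]
which supplies the first term in the claimed bound. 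It therefore suffices to control $H$ at an interior maximum of $\Phi$.

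\emph{Simons-type identity and the interior estimate.} The decisive ingredient is a pointwise identity obtained at an interior point by working in normal coordinates with an orthonormal frame aligned with the principal directions of $h_{ij}$. Applying the Codazzi equation $\nabla_i h_{jk} = \nabla_j h_{ik}$ and the Ricci identity on a $2$-surface (where $R_{1212}=K$), a direct computation gives
\[ 2\mathcal{L}H \;=\; \Delta K + K(\kappa_1-\kappa_2)^2 + 2\mathcal{Q}, \]
where $\kappa_1,\kappa_2$ are the principal curvatures and $\mathcal{Q}$ is a quadratic form in $\nabla h$ satisfying $\mathcal{Q} \ge -C_0|\nabla H|^2$ by AM--GM (and $\mathcal{Q}\ge 0$ at a critical point of $H$, since the Codazzi symmetry of $\nabla h$ then forces $\nabla h_{11} = -\nabla h_{22}$). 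At an interior maximum $p_0$ of $\Phi$ one has $\mathcal{L}\Phi \le 0$ and $\nabla H = A\nabla\rho$; using \eqref{eq-Position1} from Lemma~\ref{Lemma1.10} and the inscribed-ball bound $\mathbf{r}\cdot\mathbf{n}\ge R_0>0$ from Lemma~\ref{inball} gives
\[ \mathcal{L}H \;\le\; A\mathcal{L}\rho \;=\; -2AH - 2AK(\mathbf{r}\cdot\mathbf{n}) \;\le\; -2AH . \]
Plugging this and $|\nabla H|^2\le A^2 l^2$ into the identity above and using $(\kappa_1-\kappa_2)^2 = 4(H^2-K)\ge 0$, one obtains
\[ 4AH(p_0) \;\le\; 4K(p_0)^2 + |\Delta K(p_0)| + 2C_0 A^2 l^2 . \]
Optimizing in $A$ yields $H(p_0) \le Cl\bigl(\max K + \sqrt{\max|\Delta K|}\bigr)$. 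If $\Phi$ instead attains its maximum on $\partial\Sigma$, the boundary estimate above applies directly. In either case, the inequality $H(x) \le H(\mathrm{max}) + A(\rho(x)-\rho(\mathrm{max}))\le H(\mathrm{max}) + Al^2$ propagates the bound to all of $\bar\Sigma$.

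\emph{Main obstacle.} The chief difficulty is the degeneracy of $\mathcal{L}$ along $\partial\Sigma$ combined with the singular $1/K$ factor the pointwise identity produces on its own: setting $\nabla H=0$ directly in the identity yields only $H^2 \le K + |\Delta K|/(4K)$, which is useless wherever $K$ is small. The role of the $-A\rho$ term in $\Phi$, coupled with the positivity of the support function $\mathbf{r}\cdot\mathbf{n}\ge R_0$, is precisely to absorb this singularity and convert the identity into the clean bound stated in the lemma.
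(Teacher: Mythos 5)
Your argument is essentially the paper's: your ``Simons-type identity'' is exactly \eqref{eqH} (since $2KH^2-2K^2=\tfrac12K(\kappa_1-\kappa_2)^2$), your bound $\mathcal{Q}\ge -|\nabla H|^2$ by completing the square in $h_{11,l}$ is precisely how the paper handles $h_{12,l}h_{12,l}-h_{11,l}h_{22,l}$ at the critical point, the additive barrier $\Phi=H-A\rho$ is a cosmetic variant of the paper's multiplicative $w=He^{-\lambda\rho}$, and the boundary alternative is treated identically via Lemma~\ref{Lemma1.1} and \eqref{mean}.

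One step is wrong as written: the inequality $\mathbf{r}\cdot\mathbf{n}\ge R_0>0$. With the normal oriented so that $(h_{ij})>0$ --- which you need for $Kh^{ij}$ to be elliptic and for $H>0$, hence for the maximum-principle step at an interior maximum of $\Phi$ --- the support function is $-\mathbf{r}\cdot\mathbf{n}$, and Lemma~\ref{inball} gives $-\mathbf{r}\cdot\mathbf{n}\ge R_0$, i.e.\ $\mathbf{r}\cdot\mathbf{n}\le -R_0<0$ (this is how \eqref{rho2} is stated). So the term $-2AK\,\mathbf{r}\cdot\mathbf{n}$ in $A\mathcal{L}\rho$ is \emph{positive} and cannot be discarded. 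The slip is harmless: one only needs $|\mathbf{r}\cdot\mathbf{n}|\le|\mathbf{r}|\le l$ from \eqref{eq-Position2}, which adds $2AKl$ to the right-hand side and, after dividing by $4A$, contributes $\tfrac12 Kl\le \tfrac12 l\max_\Sigma K$ to the final bound --- exactly the term $-2\lambda HK|\mathbf{r}|$ that the paper carries through. In particular, neither Lemma~\ref{inball} nor the positivity of the support function is actually needed for this lemma; what does the work in absorbing the degeneracy is the $-2H$ produced by $\mathcal{L}\rho$ in \eqref{eq-Position1}, which you do use correctly.
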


Similar estimates were obtained in \cite{N1} and
\cite{Y2} for closed surfaces with positive Gauss curvature and in
\cite{GL} and \cite{HZ} for closed surfaces with nonnegative Gauss curvature.
Lemma \ref{Lemma1.11} extends these results to surfaces with boundary,
where Gauss curvature vanishes.

\begin{proof} By our convention, the induced metric $g$ and all related functions
are defined in $\bar D\subset\mathbb R^2$.
First, we
recall a differential equation satisfied by $H$.
For any $p\in D$, we take the
orthonormal coordinates at $p$ and then have
\begin{equation}\label{eqH}Kh^{ij}H_{ij}=(h_{12,l}h_{12,l}-h_{11,l}h_{22,l})
+2KH^2+\frac
12( \Delta K-4K^2)\quad\text {at }p,\end{equation} where $h_{ij}$
is the coefficient of the second fundamental form of
${\bf r}$, $i,j=1,2$, and $(h^{ij})$ is the inverse matrix of $(h_{ij})$. See
\cite{N1}, \cite {Y2} or \cite{HQH} p182 for a derivation.
Set $w=He^{-\lambda \rho}$ for a constant
$\lambda$ to be fixed. Then $w$ satisfies
\begin{align}e^{\lambda
\rho}Kh^{ij}&w_{ij}+2\lambda
e^{\lambda\rho}Kh^{ij}\rho_iw_j=(h_{12,l}h_{12,l}-h_{11,l}h_{22,l})\nonumber\\
&+2K H^2+\frac 12(\Delta K-4K^2) -\lambda
HKh^{ij}\rho_{ij}-\lambda^2
HKh^{ij}\rho_i\rho_j,\label{eqH1}\end{align} at any point where
the orthonormal coordinates are taken.

Suppose $w$ attains its maximum over $\bar D$ at some point $p$. If
$p\in\partial D,$ then Lemma \ref{Lemma1.1} yields
\begin{equation}\label{H1}
w=\frac 12e^{-\lambda \rho}\left(N+\frac{L}{B^2}\right)= \frac
12e^{-\lambda \rho}\sqrt {\frac{K_t}{B_t}}.
\end{equation}
If $p\in D$, we take the orthonormal
coordinates at $p$ and then have
\begin{equation}\label{H2}
w_i=0\text{ and }Kh^{ij}w_{ij}\le 0\text{ at }p.
\end{equation}
Without loss of generality we may assume $h_{12}(p)=0$.
Consequently, we obtain at $p$
$$0=e^{\lambda\rho}w_l=H_l-\lambda H\rho_l=\frac 12(h_{11,l}+h_{22,l})-\lambda H\rho_l,$$
and hence
\begin{align*}h_{12,l}h_{12,l}-h_{11,l}h_{22,l}&=\sum_{l=1}^2\big((h_{12,l})^2
+(h_{1
1,l})^2\big)-2\lambda h_{11,l}H\rho_l\\
=&\sum_{l=1}^2\big((h_{12,l})^2+\left(h_{11,l}-\lambda
H\rho_l\right)^2\big)-\lambda^ 2H^2|\nabla \rho|^2.\end{align*}
By \eqref{eq-Position1} and \eqref{eq-Position2},  we have
\begin{align*}&\ -\lambda
HKh^{ij}\rho_{ij}-\lambda^2 HKh^{ij}\rho_i\rho_j\\
=&\ 2\lambda H^2+2\lambda HK{\bf r\cdot n}-\lambda^2 HKh^{ij}\rho_i\rho_j\\
\ge&\  2\lambda H^2-4\lambda^2H^2|\nabla \rho|^2-2\lambda HK|{\bf
r}|.\end{align*} Thus inserting all the above estimates into
(\ref{eqH1}) with the aid of (\ref{H2}) yields, at $p$,
$$\frac 12(4K^2-\Delta K)
\ge 2(\lambda -3|\nabla \rho|^2\lambda^2)H^2-2\lambda HK|{\bf
r}|.$$ Let $l$ be the intrinsic diameter of $\bar D$ in $g$. Then,
\eqref{eq-Position2} implies $|\nabla \rho|\le |{\bf r}|\le l$. Therefore,
we get at $p$
$$\frac 14(4K^2-\Delta K)
\ge \lambda(1 -3\lambda l^2)H^2-\lambda lHK.$$ Choosing $\lambda
=1/{4l^2}$, we have  at $p$
$$4l^2(4K^2-\Delta K)\ge H^2-4lHK.$$
This yields at $p$
$$H\le 10l\big(K+\sqrt{|\Delta K|}\big),$$
or $$w\le 10le^{-\lambda\rho}\big(K+\sqrt{|\Delta K|}\big).$$ This
yields the desired result.
\end{proof}

Next, we derive interior estimates of higher derivatives of position vectors.
Heinz \cite{H} derived such interior estimates if $D$ is a disk, namely,
if $\partial D$ consists of one connected component. Next, we provide
a direct proof for interior estimates in the general setting.

Set
$$\rho=-\frac 12|{\bf r}|^2.$$
By  (\ref{1.1}),   we have
\begin{equation}\label{rho1}
\det (\rho_{ij}+\delta_{ij})=K(-2\rho-|\nabla \rho|^2),
\end{equation} and
\begin{equation}\label{rho2}
-2\rho-|\nabla \rho|^2=|-{\bf r\cdot n}|^2\ge R^2_0,
\end{equation}
by the geometric meaning of the Minkowski function
$-{\bf r\cdot n}$ and Lemma \ref{inball}.

\begin{theorem}\label{inregularity}
Let $\Sigma$ be an Alexandrov-Nirenberg surface of class $C^5$ in $\mathbb R^3$
with the
induced metric $g$ defined on $\bar D$ and let ${\bf r}$ be the position vector of $\Sigma$.
%be a smooth metric mentioned in Theorem \ref{mainTh} and
%let ${\bf r}$ be its smooth isometric embedding in $\mathbb R^3$.
Then for any subdomain $D'\subset\subset D_1\subset\subset D$, the principal curvatures
$k_i$ satisfy
\begin{equation}\label{eq-PrincipalLowerBound}
k_i\ge \frac 1C \quad\text {in } D',\end{equation}
where $C$ is a positive constant depending only on
\begin{equation}\label{eq-PrincipalLowerBoundDependence}
|g|_{C^4(\bar D)},\,\, \frac 1{\min_{\partial D}k_g},\,\, \frac 1{\min_{D'}K}.\end{equation}
Moreover,
there exists a constant $\alpha\in (0,1)$ depending only on the quantities
in (\ref{eq-PrincipalLowerBoundDependence}) such that,
 if $\Sigma$ is $C^{k+3}$, for some $k\ge 2$, then
$$|{D^2\bf r}|_{C^{k,\alpha}(\overline D')}
\le C,$$
where $C$ is a positive constant depending only on
$$k,\,\, |g|_{C^{k+2}(\bar D)},\,\, \frac 1{\min_{\partial D}k_g},\,\, \frac 1{\inf_{D_1}K},
\,\,\frac 1{\operatorname{dist}(D',\partial D_1)}.$$
\end{theorem}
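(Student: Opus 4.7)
The plan is to interpret the theorem as a regularity statement for the Monge--Amp\`ere equation \eqref{rho1} satisfied by $\rho=-\tfrac12|\mathbf{r}|^2$. By \eqref{rho2} its right-hand side $K(-2\rho-|\nabla\rho|^2)\ge KR_0^2$ is bounded below on any subdomain where $K$ is bounded away from zero, and by \eqref{1.1} the matrix $\rho_{ij}+\delta_{ij}$ equals $-(\mathbf{r}\cdot\mathbf{n})h_{ij}$ in an orthonormal frame. Since Lemma \ref{inball} yields $-\mathbf{r}\cdot\mathbf{n}\ge R_0>0$ everywhere, two-sided control of the principal curvatures $k_1,k_2$ of $\Sigma$ is equivalent to uniform ellipticity of the Monge--Amp\`ere operator $u\mapsto\det(u_{ij}+\delta_{ij})$ evaluated at $\rho$. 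So the whole theorem reduces to (i) establishing a positive lower bound for $k_i$, and (ii) running standard fully-nonlinear interior regularity.

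For the lower bound \eqref{eq-PrincipalLowerBound}, I would combine Lemma \ref{Lemma1.11} with the identity $k_1k_2=K$. The mean-curvature estimate of Lemma \ref{Lemma1.11} gives $k_1+k_2=2H\le C_1$ on all of $\Sigma$, with $C_1$ depending only on the intrinsic diameter, on $|g|_{C^4(\bar D)}$ (which controls $K$, $|\nabla K|$ and $|\Delta K|$ through the intrinsic formulae), and on $1/\min_{\partial D}|k_g|$. In $D'$ we have $k_1k_2=K\ge\min_{D'}K=:c_0>0$, and the positivity $k_i>0$ (from convexity of $\widetilde\Sigma$) then forces
\[
k_i\ge\frac{c_0}{2C_1}\quad\text{in }D',
\]
which is exactly \eqref{eq-PrincipalLowerBound} with the stated dependence.

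For the higher-order estimate I would pick an intermediate domain $D_{1/2}$ with $D'\subset\subset D_{1/2}\subset\subset D_1$ and apply the first part with $D_{1/2}$ in place of $D'$ (permissible since $\min_{D_{1/2}}K\ge\inf_{D_1}K>0$). The resulting two-sided bounds on $k_i$ in $D_{1/2}$ make \eqref{rho1} a uniformly elliptic, concave, fully nonlinear Monge--Amp\`ere equation there, with right-hand side pinched between two positive constants. Interior Evans--Krylov (or, equivalently, Caffarelli's interior $C^{2,\alpha}$ theory for strictly convex solutions of Monge--Amp\`ere) then yields $\rho\in C^{2,\alpha}$ on a further shrinkage, for some $\alpha\in(0,1)$ depending only on the ellipticity constants. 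Differentiating \eqref{rho1} gives a linear uniformly elliptic equation for $\partial\rho$ with $C^\alpha$ coefficients and $C^\alpha$ right-hand side (the latter controlled by $|K|_{C^\alpha}$, hence by $|g|_{C^{k+2}(\bar D)}$), so a standard Schauder bootstrap produces $\rho\in C^{k+2,\alpha}(\bar D')$. Finally \eqref{1.1} upgrades this to $h_{ij}\in C^{k,\alpha}(\bar D')$, and the Gauss formula $\mathbf{r}_{ij}=\Gamma^l_{ij}\mathbf{r}_l+h_{ij}\mathbf{n}$ converts it to $|D^2\mathbf{r}|_{C^{k,\alpha}(\bar D')}\le C$ with the stated dependence. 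The one delicacy is that \eqref{rho1} is written pointwise in orthonormal coordinates and must be re-expressed in a fixed smooth chart for the regularity theory, absorbing Christoffel symbols that remain smooth thanks to $g\in C^{k+2}(\bar D)$; this is routine. The estimate is strictly interior to $\{K>0\}$, so the boundary degeneracy $K=0$ plays no role and does not become the main obstacle -- that genuine difficulty is deferred to the subsequent sections.
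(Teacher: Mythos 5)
Your proposal is correct, and its first half (the lower bound \eqref{eq-PrincipalLowerBound}) is essentially identical to the paper's: an upper bound on $H$ from Lemma \ref{Lemma1.11} combined with $k_1k_2=K\ge \min_{D'}K>0$ gives $k_i=K/k_{3-i}\ge \inf K/(2\max H)$, which is exactly the paper's display \eqref{H4}. The second half reaches the same conclusion by a different key tool. The paper does not invoke Evans--Krylov/Caffarelli at all: it differentiates the Monge--Amp\`ere equation \eqref{rho1} once to obtain the linear equation \eqref{H7}, $h^{ij}\nabla_{ij}\rho_l=g_l$, which by the two-sided Hessian bound \eqref{H5} is uniformly elliptic with merely \emph{bounded measurable} coefficients, and then applies the two-dimensional $C^{1,\alpha}$ interior estimate (Theorem 12.4 of \cite{GT}, i.e.\ the Morrey/De Giorgi--Nash theory special to $n=2$) to get $\rho\in C^{2,\alpha}$ before bootstrapping through the structure equations. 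Your route via concavity of the Monge--Amp\`ere operator is equally valid here (the operator $\log\det(D^2\rho+I)$ is concave and uniformly elliptic once the principal curvatures are pinched, and the right-hand side is Lipschitz since $\rho\in C^{1,1}$), and has the advantage of being dimension-independent; the paper's argument is more elementary in the sense that it only needs linear theory, but it exploits the planarity of the problem. Both yield an exponent $\alpha$ depending only on the ellipticity constants, i.e.\ on the quantities in \eqref{eq-PrincipalLowerBoundDependence}, and both finish with the same Schauder bootstrap and the Gauss formula ${\bf r}_{ij}=\Gamma^k_{ij}{\bf r}_k+h_{ij}{\bf n}$, so the stated dependence of the constants comes out the same way.
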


\begin{proof} Let $k_1$ and $k_2$ be principal curvatures.
First we note
$$k_i\le 2H.$$
Hence, $k_1$ and $k_2$ are bounded from above by Lemma \ref{Lemma1.11}. Moreover,
\begin{equation}\label{H4}
k_1=\frac K{k_2}\ge\frac 1{2\max H}\inf_{\bar {D}'}K\ge\frac 1{C_{D'}}. \end{equation}
Hence, $k_1$ and $k_2$ have a positive lower bound in $D'$.
In particular, the second fundamental
form has a positive lower bound in $D'$.
 Suppose that $D'\subset\subset D_1\subset\subset D$.
 Then  the second fundamental form $h_{ij}$ of the given surface $\bf{r}$ satisfies
\begin{equation}\label{H5}
\frac 1{C_*}I\le \big(h_{ij}\big)\le C_*I \quad\text {in }D_1,
\end{equation} where $C_*$ is a positive constant depending only
\begin{equation}\label{H6}
|g|_{C^4(\bar D)},\quad \frac 1{\min_{\partial D}k_g},\quad \frac 1{\min_{D_1}K}.
\end{equation}

In view of
(\ref{1.1}) again, it follows that
\begin{equation}\label{rho3}
\nabla_{ij}\rho=h_{ij}\sqrt{-2\rho-|\nabla \rho|^2}-g_{ij}
\end{equation}are bounded in $D$.
Note
$$\frac {\partial}{\partial
(\nabla_{ij}\rho)}\left(\det\big(\nabla_{ij}\rho+\delta_{ij}\big)\right)=Kh^{ij}
(-2\rho-|\nabla \rho|^2).$$
A covariant differentiation of (\ref{rho1}) yields,
for $l=1,2$,
$$K(-2\rho-|\nabla \rho|^2)h^{ij}\nabla_l\nabla_{ij}\rho
=\nabla_l\big(K(-2\rho-|\nabla \rho|^2)\big).$$
Hence by the Ricci, identity we have, for $l=1,2$,
\begin{equation}\label{H7}
h^{ij}\nabla_{ij}\rho_l=
\frac {\nabla_l\big(K(-2\rho-|\nabla \rho|^2)\big)}{K(-2\rho-|\nabla \rho|^2)}-h^{ij}\rho_mR_{mlij}
\equiv g_l \quad\text {in }D_1.
\end{equation}
Obviously, $g_l$, $l=1,2$  are bounded in $D_1$ by (\ref{H5}) and (\ref{rho2}).
Moreover, (\ref{H5}) implies that  (\ref{H7}) is uniformly elliptic
with bounded coefficients in two dimensional space.
Now by Theorem 12.4 in \cite{GT}, we have, for $i=1,2$,
$$[\rho_l]_{C^{1,\alpha}(D')}\le C(1+|g_l|_{L^{\infty}(D_1)})\le C_1, $$
where $\alpha=\alpha(C_*)\in (0,1)$ and $C, C_1$ are positive constants
depending only on the quantities in (\ref{H6}).
Therefore combining the above inequalities with the  structure equations
$${\bf r}_{ij}=\Gamma_{ij}^k{\bf r}_k+h_{ij}{\bf n}=\Gamma_{ij}^k{\bf r}_k
+\frac {\nabla_{ij}\rho+g_{ij}}{\sqrt{-2\rho-|\nabla \rho|^2}}{\bf n},$$ we obtain
$$|D^2{\bf r}|_{C^\alpha(D')}\le C,$$
where $C$ is a positive constant depending only on the quantities in (\ref{H6}).

Combining the standard regular theory of
elliptic equations with the structure equations, we have, for $k\ge 2$,
$$
|D^k\rho|_{C^{\alpha}(\overline D')},+|D^k\bf{r}|_{C^{\alpha}(\overline D')}\le
\le C,$$
where $C$ is a positive constant depending only on
$$k,\,\, |g|_{C^{k+2}(\bar D)},\,\, \frac 1{\min_{\partial D}k_g},\,\, \frac 1{\inf_{D_1}K},
\,\, \frac 1{\operatorname{dist}(D',\partial D_1)}.
$$
This is the desired estimate.
\end{proof}

\section{Gauss-Codazzi Equations near Boundary}\label{sec-GaussCodazziEqs}

Suppose $\Sigma$ is an Alexandrov-Nirenberg surface as introduced in
Definition \ref{dfn-ANSurfaces}. The primary goal in this
paper is to derive estimates of the second fundamental form near the
boundary $\partial\Sigma$.

Let $\sigma$ be a connected component in $\partial\Sigma$ and
$L, M$ and $N$ be the coefficients of the second fundamental form
in the geodesic
coordinates as in \eqref{geod1} and \eqref{geod2} near $\sigma$.
In this section, we derive differential equations of $1/N$ and $M$.
We first
derive an equation of $1/N$.

\begin{lemma}\label{lemma-EqN1} Let $\Sigma$ be an Alexandrov-Nirenberg surface
in $\mathbb R^3$ of class $C^5$ and $\sigma$ be a connected component in $\partial\Sigma$.
Then, in the geodesic
coordinates as in \eqref{geod1} and \eqref{geod2}, with
$(\partial_1, \partial_2)=(\partial_s, \partial_t)$,
\begin{align}\label{eq5/N}
%\mathcal{L}\left(\frac 1N\right)=
a^{ij}\partial_{ij}\left(\frac
1N\right)+\frac {A_{i}}N\partial_i\left(\frac
1N\right)&=\frac{A_0}{N^2},\\
\label{eq6/N}
%\mathcal{L}\left(\frac1N\right)=
\partial_i\left(a^{ij}\partial_{j}\left(\frac
1N\right)\right)+\frac {\widetilde A_{i}}{N}\partial_i\left(\frac
1N\right)&=\frac{A_0}{N^2},\\
\label{eq7/N}\partial_i\left(Na^{ij}\partial_{j}\left(\frac 1
{N^2}\right)\right)+ \widetilde A_{i}\partial_i\left(\frac 1
{N^2}\right)&=\frac{2A_0}{N^2},
\end{align}
where
$$a^{11}=N, \quad a^{12}=-M, \quad
a^{22}=L,$$
and $A_1, A_2, \widetilde A_1, \widetilde A_2$ and $A_0$ are polynomials of $M, N, B, B^{-1}, K$ and
derivatives of $B, K$, with $A_1, A_2, \widetilde A_1, \widetilde A_2$
involving derivatives of $B,K$ of order 1 and
$A_0$ involving derivatives  up to order 2. Moreover,
\begin{align} \label{eq11/N}
A_{2}&=2K_tB^2+BB_tN^2-\frac{4B_t}{B}M^2+4BB_tK,\\
\label{eq12/N}
\widetilde
A_{2}&=2K_tB^2+\frac{B_s}{B}MN-\frac{5B_t}{B}M^2
+3BB_tK.\end{align}
\end{lemma}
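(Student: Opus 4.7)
The strategy is to derive \eqref{eq5/N} directly, then obtain \eqref{eq6/N} and \eqref{eq7/N} as algebraic reformulations. The structural fact underlying \eqref{eq5/N} is that by (GC3), $LN-M^{2}=KB^{2}$, so the matrix $(a^{ij})$ with $a^{11}=N$, $a^{12}=-M$, $a^{22}=L$ is the adjugate of the second fundamental form. Consequently, the combination $a^{ij}N_{ij}$ will naturally annihilate all second derivatives of $M$ coming from the Codazzi equations; this is the algebraic miracle that makes the proof work.

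First I would use (GC3) to eliminate $L=(M^{2}+KB^{2})/N$, reducing the system to the two unknowns $M,N$. Differentiating (GC2) gives
\[
N_{ss}=M_{st}+\frac{B_{t}}{B}M_{s}+\partial_{s}\!\left(\tfrac{B_{t}}{B}\right)M,\qquad
N_{st}=M_{tt}+\frac{B_{t}}{B}M_{t}+\partial_{t}\!\left(\tfrac{B_{t}}{B}\right)M.
\]
To reach $LN_{tt}$, substitute $L=(M^{2}+KB^{2})/N$ into (GC1), compute $L_{t}$ by differentiation, and eliminate $M_{t}$ using (GC2); this yields a first-order algebraic identity of the form $NM_{s}=2MN_{s}-LN_{t}+R$, where $R$ is a polynomial in $M,N,K,B,B^{-1}$ and the first derivatives $B_{s},B_{t},K_{t}$. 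Differentiating this identity in $t$ expresses $LN_{tt}$ in terms of $NM_{st}$, $MN_{st}$, $N_{t}M_{s}$, $M_{t}N_{s}$, $L_{t}N_{t}$ and $R_{t}$. Then in the combination $NN_{ss}-2MN_{st}+LN_{tt}$ the $M_{st}$ contributions from $NN_{ss}$ and $LN_{tt}$ cancel, and the $M_{tt}$ contributions from $-2MN_{st}$ and from $LN_{tt}$ (entering through $2MN_{st}$) also cancel. What remains is a first-order polynomial in $N_{s},N_{t}$ with coefficients depending on $M,N,K,B,B^{-1}$ and derivatives of $B,K$ of order at most 2 (the order-2 derivatives entering only through $R_{t}$). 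Converting to $u=1/N$ via $\partial_{i}u=-N_{i}/N^{2}$ and $\partial_{ij}u=-N_{ij}/N^{2}+2N_{i}N_{j}/N^{3}$, the quadratic piece $2a^{ij}N_{i}N_{j}/N^{3}$ combines with the remaining linear terms to give $(A_{i}/N)\partial_{i}(1/N)$ as in \eqref{eq5/N}, while the residual order-2 derivatives of $B,K$ form $A_{0}/N^{2}$. Tracking the coefficient of $N_{t}$ through this bookkeeping produces the explicit formula \eqref{eq11/N}.

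For \eqref{eq6/N}, the difference with \eqref{eq5/N} is $(\partial_{i}a^{ij})\partial_{j}(1/N)$. From (GC2) and (GC1) one reads $\partial_{i}a^{i1}=N_{s}-M_{t}=(B_{t}/B)M$ and $\partial_{i}a^{i2}=-M_{s}+L_{t}=(B_{t}/B)L-(B_{s}/B)M+BB_{t}N$, so $\widetilde{A}_{i}=A_{i}-N\partial_{j}a^{ji}$. Substituting $L=(M^{2}+KB^{2})/N$ into this correction and into \eqref{eq11/N} gives, after cancellation, exactly \eqref{eq12/N}. Equation \eqref{eq7/N} is immediate from $\partial_{i}(1/N^{2})=(2/N)\partial_{i}(1/N)$, which yields $Na^{ij}\partial_{j}(1/N^{2})=2a^{ij}\partial_{j}(1/N)$; multiplying \eqref{eq6/N} by $2$ then reproduces \eqref{eq7/N}. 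The principal obstacle here is not conceptual but computational: the cancellation is forced by the adjugate structure, but verifying the explicit coefficients \eqref{eq11/N} and \eqref{eq12/N} requires the careful tracking of the many first-order $B$- and $K$-derivatives generated in the intermediate steps.
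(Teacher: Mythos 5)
Your route is essentially the paper's: eliminate $L$ via \eqref{GC3}, differentiate the Codazzi system so that the second derivatives of $M$ cancel in the combination $LN_{tt}-2MN_{st}+NN_{ss}$, substitute the algebraic expressions for $M_s$, $M_t$, $L_t$, pass to $1/N$, obtain \eqref{eq6/N} from \eqref{eq5/N} through the divergence correction $\partial_ia^{ij}$ read off from the Codazzi equations (your formulas for $\partial_ia^{i1}$, $\partial_ia^{i2}$ and the resulting passage from \eqref{eq11/N} to \eqref{eq12/N} are correct), and get \eqref{eq7/N} from \eqref{eq6/N} via $\partial_j(1/N^2)=(2/N)\partial_j(1/N)$.

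One step, however, is mis-stated, and it is exactly the point that makes $1/N$ (rather than $N$) the right unknown. After the second derivatives of $M$ cancel, what remains is \emph{not} linear in $N_s,N_t$: substituting $M_t=N_s-\frac{B_t}{B}M$ and $M_s=\frac{2M}{N}N_s-\frac{L}{N}N_t+\frac{Q}{N}$ into the products $2M_tN_s$, $-N_tM_s$, $-L_tN_t$ produces genuinely quadratic terms, and the identity one actually obtains is
$$-LN_{tt}+2MN_{st}-NN_{ss}+\frac{2L}{N}N_t^2-\frac{4M}{N}N_sN_t+2N_s^2=\frac{A_1}{N}N_s+\frac{A_2}{N}N_t+A_0,$$
whose quadratic part equals $\frac{2}{N}a^{ij}N_iN_j$. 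It is precisely this quadratic form that cancels the chain-rule term $2a^{ij}N_iN_j/N^3$ in $\partial_{ij}(1/N)=-N_{ij}/N^2+2N_iN_j/N^3$ after dividing by $N^2$. Your sentence asserting that the quadratic piece ``combines with the remaining linear terms to give $(A_i/N)\partial_i(1/N)$'' cannot be right as written: a term quadratic in the gradient cannot be absorbed into a linear one whose coefficients $A_i$ are, as the lemma requires, independent of $N_s,N_t$. You must check that the quadratic part generated by the substitution is exactly $\frac{2}{N}\bigl(NN_s^2-2MN_sN_t+LN_t^2\bigr)$ --- this is where the adjugate structure does its real work --- and only then does \eqref{eq5/N} follow with first-order coefficients of the stated form.
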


\begin{proof} By (\ref{GC3}), we
have
$$L=\frac {M^2+KB^2}N.$$
This makes sense in a neighborhood of $\sigma$ in $\Sigma$ as $N>0$
there by Lemma \ref{Lemma1.1}. Then (\ref{GC1}) and (\ref{GC2})
are reduced to
\begin{align}\label{sysNM1}\begin{split} -LN_t+2MN_s-NM_s+Q&=0,\\
M_t-N_s+\frac{B_t}{B}M&=0,\end{split}\end{align}
where
\begin{equation*}
Q=-\frac{2B_t}{B}M^2-\frac{B_t}{B}\big(M^2+KB^2\big)+\frac{B_s}{B}MN-BB_tN^2+(B^2K)_t.
\end{equation*}
We differentiate the first and second
equations in $t$ and in $s$ respectively and add the first resulted
equation and the $N$-multiple
of the second equation. Then,
\begin{align*}-LN_{tt}+2MN_{st}-NN_{ss}+2M_tN_s-L_tN_t-N_tM_s& \\
+\frac{B_t}{B}NM_s+\left(\frac{B_t}{B}\right)_sMN+Q_t&=0.\end{align*}
Note, by \eqref{sysNM1}
\begin{align*}
M_s&=\frac{2M}{N}N_s-\frac{L}{N}N_t+\frac{Q}{N},\\
M_t&=N_s-\frac{B_t}{B}M.\end{align*}
Also, by (\ref{GC1}),
$$L_t=M_s+\frac{B_t}{B}L-\frac{B_s}{B}M+BB_tN.$$
By a simple substitution of $L_t, M_s$ and $M_t$, we have
$$-LN_{tt}+2MN_{st}-NN_{ss}
+\frac{2L}{N}N_t^2-\frac{4M}{N}N_sN_t+2N_s^2-I=0,$$
where
\begin{align*}I&=\frac{2B_t}{B}MN_s+\frac{2Q}{N}N_t
+\left(\frac{B_t}{B}\frac {M^2+KB^2}N-\frac{B_s}{B}M+BB_tN\right)N_t\\
&\qquad -\frac{B_t}{B}NM_s-\left(\frac{B_t}{B}\right)_sMN-Q_t.\end{align*}
By substituting $M_s$ and $M_t$ by $N_s$ and $N_t$,
we note that $I$ is linear in $N_s$ and $N_t$ and hence can be put in the form
$$I=\frac {A_1}NN_s+\frac  {A_2}NN_t+A_0,$$
where $A_1, A_2$ and $A_0$ are polynomials of $M, N, B, B^{-1}, K$ and
derivatives of $B, K$, with $A_1, A_2$ involving derivatives of $B,K$ of order 1 and
$A_0$ involving derivatives  up to order 2.
Then,
\begin{align}\label{eq01/N}\begin{split}
-LN_{tt}+2MN_{st}-NN_{ss}+\frac {2L}NN_t^2-\frac {4M}{N}N_sN_t+2N_s^2& \\
- \frac{A_1}{N}N_s- \frac{A_2}{N}N_t- A_0&=0.\end{split}\end{align}
In the following, we need an explicit expression of $A_2$. Indeed, by the
expression of $Q$ and a straightforward calculation, we obtain
\eqref{eq11/N}.

By dividing \eqref{eq01/N} by $1/N^2$, we obtain
\begin{align}\label{eq1/N}\begin{split}&L\partial_{tt}\left(\frac
1N\right)-2M\partial_{ts}\left (\frac
1N\right)+N\partial_{ss}\left(\frac 1N\right)\\
&\qquad+\frac
{A_{2}}N\partial_t\left(\frac 1N\right)+\frac
{A_{1}}N\partial_s\left(\frac
1N\right)-\frac{A_0}{N^2}=0.\end{split}\end{align}
This is \eqref{eq5/N}. We can also express (\ref{eq1/N}) in divergence form
\begin{align}\label{eq3/N}\begin{split}\partial_t\left[L\partial_{t}
\left(\frac 1N\right)-M\partial_{s}\left(\frac
1 N\right)\right]+\partial_s\left[N\partial_{s}\left(\frac
1N\right)-M\partial_{t}\left(\frac
1 N\right)\right]& \\
+\frac {\widetilde A_{2}}{N}\partial_t\left(\frac 1N\right)+\frac
{\widetilde A_{1}}{N}\partial_s\left(\frac 1N\right) -
\frac{A_0}{N^2}&=0,\end{split}\end{align} where
$$\widetilde A_1=A_{1}+N(M_t-N_s),\quad \widetilde A_2=A_{2}-N(L_t-M_s).$$
This yields \eqref{eq6/N} and \eqref{eq12/N} by (\ref{GC1}) and (\ref{GC2}).

Last, \eqref{eq7/N} follows from \eqref{eq6/N}.
\end{proof}

\begin{remark}\label{rmk-Coeff-N} The explicit
expressions of $A_2$ and $\widetilde A_2$ play an important role in the estimate of
$1/N$.
We note that, for  any
$t\in [0,\delta]$,
\begin{equation}\label{eq4/N}A_{2} \ge C_1-C_2M^2,\end{equation}
where $C_1, C_2$ and
$\delta$ are positive constants under control. This follows easily
from  $K=0$, $K_t>0$ at $t=0$ and $B_t>0$ in
the region considered.
\end{remark}

%We note that, for  any
%$t\in [0,\delta]$,
%\begin{equation}\label{eq4/N}A_{2},\widetilde A_2 \ge C_1-C_2M^2,\end{equation}
%where $C_1, C_2$ and
%$\delta$ are positive constants under control. This follows easily
%from  $K=B_s=0$, $K_t>0$ at $t=0$ and $B_t>0$ in
%the region considered.

Now, we consider a function $h=h(s,t)$ and derive an equation for
$\frac{1}{N^2}-h$. By \eqref{eq7/N}, we have
$$\partial_i\left(Na^{ij}\partial_{j}\left(\frac 1
{N^2}-h\right)\right)+ \widetilde A_{i}\partial_i\left(\frac 1
{N^2}-h\right)-\frac{2A_0}{N^2}+I=0,
$$
where
\begin{align*}I&= \partial_i(Na^{ij}h_j)+\widetilde A_ih_i\\
&=N\partial_i( a^{ij}h_j)+\widetilde
A_ih_i+a^{ij}h_j\partial_iN.
\end{align*}
For the last term, we write it as
$$a^{ij}h_j\partial_iN=-\frac12N^3a^{ij}h_j\partial_i\left(\frac1{N^2}\right)
=-\frac12N^3a^{ij}h_j\partial_i\left(\frac1{N^2}-h\right)
-\frac12N^3a^{ij}h_jh_i.$$
Therefore, by a simple substitution, we obtain
\begin{equation}\label{eq8/N}\partial_i\left(Na^{ij}\partial_{j}\left(\frac 1
{N^2}-h\right)\right)+ \overline{A}_{i}\partial_i\left(\frac 1
{N^2}-h\right)-\overline{A}_0=0,
\end{equation}
where
\begin{equation*}
\overline{A}_i=\widetilde A_i-\frac12N^3a^{ij}h_j,
\end{equation*} and
\begin{equation*}
\overline{A}_0= \frac{2A_0}{N^2}-Na^{ij}h_{ij}-\big(\widetilde A_i+N\partial_ja^{ij}\big)h_i
+\frac12
N^3a^{ij}h_ih_j.\end{equation*}
In the expression of $\overline{A}_0$, the derivatives of $a^{ij}$ have the form
$$\partial_1a^{11}+\partial_2a^{21}=\partial_sN-\partial_tM,\quad
\partial_1a^{12}+\partial_2a^{22}=\partial_tL-\partial_sM,$$
which can be substituted by the Codazzi equations \eqref{GC1}-\eqref{GC2}.
In the special case $h=h(s)$, we have
\begin{equation}\label{eq9/N}
\overline{A}_1=\widetilde A_1-\frac12N^4h_s,\quad
\overline{A}_2=\widetilde A_2+\frac12MN^3h_s,\end{equation}
and
\begin{equation}\label{eq10/Na}
\overline{A}_0=
\frac{2A_0}{N^2}-N^2h_{ss}-\widetilde A_1h_s-\frac{B_t}{B}MNh_s+\frac12N^4h_s^2.\end{equation}

Next, we derive an equation for $M$.

\begin{lemma}\label{lemma-EqM1} Let $\Sigma$ be an Alexandrov-Nirenberg surface
in $\mathbb R^3$ of class $C^5$ and $\sigma$ be a connected component in $\partial\Sigma$.
Then, in the geodesic
coordinates as in \eqref{geod1} and \eqref{geod2},
with $(\partial_1, \partial_2)=(\partial_s, \partial_t)$,
\begin{align}\label{eq1M}
%\mathcal{L}\left(\frac 1N\right)=
a^{ij}\partial_{ij}M-\frac{2M}{NL}a^{ij}\partial_iM\partial_jM+\widehat A_{i}\partial_iM
+\widehat A_0&=0,\\
\label{eq2M}
%\mathcal{L}\left(\frac1N\right)=
\partial_i(a^{ij}\partial_{j}M)-\frac{2M}{NL}a^{ij}\partial_iM\partial_jM+\widecheck{A}_{i}\partial_iM
+\widehat A_0&=0,
\end{align}
where
$a^{11}=N$, $ a^{12}=-M$, $a^{22}=L$, and
\begin{align}\label{eq3M}\begin{split}
\widehat A_0&=\frac{1}{NL}(KB^2)_s\widehat A_{01}+\widehat
A_{02}\\
 \widehat A_1&=\frac{2M}{NL}(B^2K_t-BB_tN^2)+\frac{4K}{L}\left(BB_t\frac
MN-BB_s\right)
-\frac{B^2K_s}{L}+\widehat A_{11},\\
\widehat A_2&=\frac{2M}{NL}(KB^2)_s+\widehat A_{21},
\end{split}\end{align}
and
$$\widecheck{A}_1=\widehat A_1-\frac{B_t}{B}M,\quad
\widecheck{A}_2=\widehat A_2-\frac{B_t}{B}L+\frac{B_s}{B}M-BB_tN,$$
and $\widehat A_{01},
\widehat A_{02}, \widehat A_{11} $ and $\widehat A_{21}$
are polynomials of $M,N,N^{-1}$, $B, B^{-1}$, $K$, and  derivatives of $B, K$ up to order 2.
\end{lemma}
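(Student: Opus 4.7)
The plan is to derive \eqref{eq1M} in parallel with the derivation of the $N$-equation in Lemma 3.1, but now eliminating $L$ algebraically via the Gauss equation $L = (M^2 + KB^2)/N$, which is legitimate in a neighborhood of $\sigma$ since $N > 0$ there by Lemma 2.1. The principal part $a^{ij}\partial_{ij}M = NM_{ss} - 2MM_{st} + LM_{tt}$ suggests a specific combination: differentiate \eqref{GC1} in $s$ and \eqref{GC2} in $t$ to obtain
$$L_{ts} - M_{ss} = (\text{first order}), \qquad M_{tt} - N_{st} = -\tfrac{B_t}{B}M_t + (\text{first order}),$$
and form the combination $(-N)\cdot(\text{first}) + L\cdot(\text{second})$, producing $NM_{ss} + LM_{tt} - NL_{ts} - LN_{st}$ plus first-order residuals. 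To absorb the last two terms, I would differentiate \eqref{GC3} once in $s$ and once in $t$ to obtain the mixed-derivative identity
$$NL_{st} + LN_{st} = 2MM_{st} + 2M_sM_t - N_sL_t - N_tL_s + (KB^2)_{st},$$
and substitute. The $2MM_{st}$ combines with $NM_{ss} + LM_{tt}$ to yield exactly $a^{ij}\partial_{ij}M$; the remaining terms are first-order in $L$, $M$, $N$.

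Next, every first derivative of $L$ and $N$ appearing in the residual must be expressed in terms of $M_s$, $M_t$ and zero-order quantities so that the final equation is genuinely a PDE in $M$ alone (with $L$, $M$, $N$ as coefficient functions). From \eqref{GC1}--\eqref{GC2},
$$N_s = M_t + \tfrac{B_t}{B}M, \qquad L_t = M_s + \tfrac{B_t}{B}L - \tfrac{B_s}{B}M + BB_tN,$$
and from the $s$- and $t$-derivatives of \eqref{GC3} combined with these,
$$L_s = \tfrac{1}{N}\bigl(2MM_s + (KB^2)_s - N_sL\bigr), \qquad N_t = \tfrac{1}{L}\bigl(2MM_t + (KB^2)_t - NL_t\bigr).$$
The division by $L$ in the expression for $N_t$ is the origin of the $1/L$ and $1/(NL)$ singularities recorded in \eqref{eq3M}. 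The cross combination $2M_sM_t - N_sL_t - N_tL_s$ then expands into a quadratic form in $(M_s, M_t)$ whose matrix must, by the algebraic structure of the Gauss equation, be a scalar multiple of $a^{ij}$; a direct computation identifies that scalar as $-2M/(NL)$, producing the quadratic term $-\tfrac{2M}{NL}a^{ij}\partial_iM\,\partial_jM$ in \eqref{eq1M}. The linear terms $\widehat{A}_i\partial_iM$ and the zero-order term $\widehat{A}_0$ then arise from collecting all remaining contributions; the stated dependence on $(KB^2)_s$ and $(KB^2)_t$ in $\widehat{A}_1, \widehat{A}_2, \widehat{A}_0$ comes directly from the two displayed formulas for $L_s, N_t$.

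Finally, \eqref{eq2M} follows from \eqref{eq1M} by writing
$$a^{ij}\partial_{ij}M = \partial_i\bigl(a^{ij}\partial_jM\bigr) - \bigl(\partial_ia^{ij}\bigr)\partial_jM$$
and using \eqref{GC1}--\eqref{GC2} to evaluate
$$\partial_1a^{11} + \partial_2a^{21} = N_s - M_t = \tfrac{B_t}{B}M, \qquad \partial_1a^{12} + \partial_2a^{22} = L_t - M_s = \tfrac{B_t}{B}L - \tfrac{B_s}{B}M + BB_tN,$$
absorbing these into the first-order coefficients to recover $\widecheck{A}_1, \widecheck{A}_2$ as stated. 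The main obstacle in this proof is bookkeeping: tracking which contributions genuinely carry a $1/(NL)$ factor (those tied to $(KB^2)_s$ or $(KB^2)_t$, both of which vanish to the same order as $NL$ on $\sigma$) versus those that remain regular up to the boundary. This precise form of the singular structure, and not merely the existence of such an equation, is what will be needed for the boundary estimates of the second fundamental form in later sections.
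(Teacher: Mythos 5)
Your proposal is correct and follows essentially the same route as the paper: cross-differentiate the Codazzi equations, use the Gauss equation $L=(M^2+KB^2)/N$ to eliminate all first derivatives of $L$ and $N$ in favor of $M_s,M_t$, identify the resulting quadratic form as $-\tfrac{2M}{NL}a^{ij}\partial_iM\partial_jM$, and pass to divergence form via $\partial_1a^{11}+\partial_2a^{21}=N_s-M_t$ and $\partial_1a^{12}+\partial_2a^{22}=L_t-M_s$. The only (cosmetic) difference is that you cancel $NL_{st}+LN_{st}$ by the mixed $st$-derivative of the Gauss equation, whereas the paper first substitutes $L_t$ into \eqref{GC1} to obtain the reduced system \eqref{sysNM2} and then only needs $N_{st}$ to cancel; both yield the same substitutions and the same singular structure of $\widehat A_0,\widehat A_1,\widehat A_2$.
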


\begin{proof} By a simple arrangement, we write (\ref{sysNM1}) as
\begin{align}\label{sysNM2}\begin{split}-LN_t+2MM_t-NM_s+\widehat{Q}&=0,\\
M_t-N_s+\frac{B_t}{B}M&=0,\end{split}\end{align} where
$$
\widehat Q=(KB^2)_t-\frac {B_t}B(M^2+KB^2)+\frac {B_s}BMN-BB_tN^2.
$$
By eliminating the derivatives of $N$ in \eqref{sysNM2}, we will get a differential equation of $M$.
Specifically, we differentiate the first and second equations in
$s$ and in $t$ respectively,
multiply the second and the first equations of (\ref{sysNM2})
by $L$ and $-1$, and then sum up. We then have
\begin{align*}LM_{tt}-2MM_{st}+NM_{ss}+L_sN_t+N_sM_s-2M_sM_t& \\
+\frac{B_t}{B}LM_t+\left(\frac{B_t}{B}\right)_tLM -\widehat
Q_s&=0.\end{align*}
%This is a similar process as in deriving the equation
%for $1/N$ in \eqref{eq1/N}. We skip details and record the equation for $M$
By \eqref{sysNM2}, we have
\begin{align*}
N_t&=\frac{2M}{L}M_t-\frac{N}{L}M_s+\frac{\widehat Q}{L},\\
N_s&=M_t+\frac{B_t}{B}M.\end{align*}
Next,
$$L_s=\left(\frac{M^2+KB^2}{N}\right)_s
=\frac{2M}{N}M_s+\frac1N(KB^2)_s-\frac{M^2+KB^2}{N^2}N_s. $$
By substituting $N_s$ in the expression of $L_s$, we have
$$L_s=\frac{2M}{N}M_s-\frac{L}{N}M_t-\frac{B_t}{B}\frac{LM}{N}+\frac1N(KB^2)_s.$$
By a simple substitution of $L_s$, $N_s$ and $N_t$, we obtain
$$LM_{tt}-2MM_{st}+NM_{ss}
-\frac{2M}{N}M_t^2+\frac{4M^2}{NL}M_sM_t-\frac{2M}{L}M_s^2+\widehat I=0,$$
where
\begin{align*}
\widehat I&=\left(-\frac{B_t}{B}\frac{LM}{N}
+\frac{1}{N}(KB^2)_s\right)\left(\frac{2M}{L}M_t-\frac{N}{L}M_s\right)\\
&\qquad +\frac{\widehat
Q}{L}\left(\frac{2M}{N}M_s-\frac{L}{N}M_t\right)
+\frac{\widehat Q}{L}\left(-\frac{B_t}{B}\frac{LM}{N}+\frac1N(KB^2)_s\right)\\
&\qquad+\frac{B_t}{B}MM_s+\frac{B_t}{B}LM_t+\left(\frac{B_t}{B}\right)_tLM -\widehat
Q_s.\end{align*}
Note that $\widehat I$ is linear in $M_s$ and $M_t$ and hence can be put in the form
$$\widehat I=\widehat A_1M_s+\widehat A_2M_t+\widehat A_0,$$
for some functions $\widehat A_0$, $\widehat A_1$ and $\widehat A_2$.
Then,
\begin{align}\label{mequation1}\begin{split}
LM_{tt}-2MM_{st}+NM_{ss}-\frac {2M}NM_t^2+\frac {4M^2}{LN}M_sM_t-\frac {2M}LM_s^2& \\
+\widehat A_1M_s+\widehat A_2M_t+\widehat A_0&=0.\end{split}\end{align}
In calculating $\widehat I$, we need to collect terms involving $L^{-1}$.
By the explicit expression of $\widehat Q$ and a
straightforward calculation, we obtain
\eqref{eq3M}.
We can also write \eqref{mequation1} in the
divergence form
\begin{align}\label{mequation1p}\begin{split}
(LM_{t}-MM_{s})_t+(NM_{s}-MM_t)_s
-\frac {2M}{NL}(LM_t^2-2MM_sM_t+NM_s^2)& \\
+\left(\widehat A_1-\frac{B_t}{B}M\right)M_s
+\left(\widehat A_2-\frac{B_t}{B}L+\frac{B_s}{B}M-BB_tN\right)M_t
+\widehat A_0&=0.\end{split}\end{align}
Note that \eqref{mequation1} and \eqref{mequation1p} are \eqref{eq1M} and
\eqref{eq2M}, respectively.
\end{proof}

We now analyze $\widehat A_0$, $\widehat A_1$ and $\widehat A_2$.

\begin{remark}\label{rmk-Coeff-M} We may write
$$\widehat A_1=\widehat A_{12}\frac{M}{L}\big(N-N(s,0)\big)+
\widehat A_{13}.$$
Then,  $\widehat A_{12}$, $\widehat A_{13}$,
$\widehat A_0$ and $\widehat A_2$ are bounded by a constant depending only on
\begin{equation}\label{maincond1}
\sup_{0\le t\le  1,\ |\alpha|\le 2} \left\{M,N,\frac 1N,\frac{1}{K_t},
\frac 1{B},|\partial^{\alpha}B|, |\partial^\alpha K|\right\}.
\end{equation}\end{remark}

To see this, we note that, by $LN=KB^2+M^2\ge KB^2,$ we have
$$\frac{K}{L}\le \frac{N}{B^2}.$$
By $K=0,K_t> 0$ at $t=0$, we have
$$K_s\le CK\le CL.$$
Hence, $\widehat A_0$, $\widehat A_2$ are bounded and $\widehat
A_1$ is bounded by \eqref{maincond1} except the term
$$\frac{2M}{NL}(B^2K_t-BB_tN^2).$$
We write this as
$$-\frac{2BB_tM}{NL}\big(N^2-N^2(s,0)\big)
+\frac{2BM}{NL}\big(BK_t-B_tN^2(s,0)\big).$$ By Lemma
\ref{Lemma1.1}, we have $\big(BK_t-B_tN^2\big)(s,0)=0$ on $t=0$.
Also, since $K_t>0$ on $t=0$, we have $L\ge Ct$. Therefore, the
second term above is bounded by \eqref{maincond1}. We then have the
desired decomposition of
$\widehat A_1$.

\section{$L^\infty$-estimates near Boundary}\label{sec-BoundsMeanCurv}

Let $\Sigma$ be an Alexandrov-Nirenberg
surface in $\mathbb R^3$.
Starting from this section, we will estimate the second fundamental form near
boundary $\partial\Sigma$. We first prove an $L^\infty$-estimate by the maximum principle.

\begin{lemma}\label{Remark1.13} Let $\Sigma$ be an Alexandrov-Nirenberg
surface in $\mathbb R^3$ of class $C^5$ and $\sigma$ be a connected component of
$\partial\Sigma$. Then, in the geodesic
coordinates as in (\ref{geod1})-(\ref{geod2}),
$$0\le L\le C,\quad |M|\le C, \quad \frac1C\le N\le C\quad\text{for any }
t\in [0, \delta_0],$$ where $C$ and $\delta_0$ are positive
constants depending only on the quantities
\begin{equation}\label{maincond}
|g|_{C^4(\Sigma)},\quad \max_{\partial \Sigma}\frac{1}{|k_g|},\quad
\max_{\partial \Sigma}\frac{1}{|\nabla K|}.\end{equation}
\end{lemma}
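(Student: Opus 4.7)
The plan is to extract boundary data from Lemma \ref{Lemma1.1}, derive upper bounds via Lemma \ref{Lemma1.11}, and then establish the lower bound on $N$ --- the only genuinely delicate point --- by a maximum principle argument exploiting the sign of the coefficient $A_2$ identified in Remark \ref{rmk-Coeff-N}. First, on $\sigma=\{t=0\}$, Lemma \ref{Lemma1.1} gives $L=M=0$ and $N=\sqrt{K_t/B_t}$; the hypotheses \eqref{maincond} control $K_t$ and $B_t$ both from above (via $|g|_{C^4}$) and from below (via $\max_{\partial\Sigma}\tfrac{1}{|\nabla K|}$ and $\max_{\partial\Sigma}\tfrac{1}{|k_g|}$), so $1/C_0\le N(s,0)\le C_0$ with $C_0$ depending only on \eqref{maincond}. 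Since $\Sigma\in C^5$, $N$ is continuous and thus $N>0$ on some initial strip $[0,2\pi]\times[0,\delta_*]$; the task is to replace the qualitative $\delta_*$ by a quantitative $\delta_0$.

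Next, Lemma \ref{Lemma1.11} yields $H\le C$ in terms of \eqref{maincond}, because $|\Delta K|$, $K$, $|\nabla K|$ and $|k_g|$ are all controlled by those quantities. On any strip where $N>0$, the Gauss equation $NL=KB^2+M^2\ge 0$ forces $L\ge 0$, whence the identity $H=\tfrac12(L/B^2+N)$ gives $0\le L\le C$ and $N\le C$; then $|M|^2=NL-KB^2\le NL$ gives $|M|\le C$. The problem therefore reduces to producing a quantitative lower bound on $N$ on a strip $[0,2\pi]\times[0,\delta_0]$ with $\delta_0$ depending only on \eqref{maincond}.

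For this lower bound, which is the main obstacle, I would apply the maximum principle to $w=1/N$ through equation \eqref{eq5/N}. The principal matrix $a^{ij}$ is positive semidefinite with determinant $KB^2$, so the operator is characteristically degenerate precisely on $\{t=0\}$, where all the boundary data live; the usual barriers at the non-degenerate boundary are unavailable here. To compensate, I would compare $w$ with an affine barrier $\phi(t)=C_0+\mu t$ chosen so that $\phi(0)\ge\max_s w(s,0)$ and $\mu>0$ is large. At a would-be interior maximum of $w-\phi$ one has $w_s=0$ and $w_t=\mu$, and since $\phi_{ij}\equiv 0$ the equation reduces to an inequality of the form $(A_2/N)\mu\le A_0/N^2$ plus a nonpositive Hessian term. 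The algebraic input from Remark \ref{rmk-Coeff-N}, $A_2\ge C_1-C_2M^2$, combined with the Taylor estimate $|M|\le Ct$ coming from $M(s,0)=0$ and the Codazzi equation $M_t=N_s-(B_t/B)M$, gives $A_2\ge C_1/2$ on $\{0\le t\le\delta_0\}$ for $\delta_0$ small. With this positive drift coefficient in hand, $\mu$ can be chosen large enough to beat the inhomogeneous term and produce a contradiction; hence $w\le\phi$ on the strip and $N\ge 1/(C_0+\mu\delta_0)$. The crux of the whole argument is therefore this barrier construction against a characteristically degenerate operator, and it hinges on the explicit sign-definite expression \eqref{eq11/N} for $A_2$, which supplies the ``good'' first-order $\partial_t$-term that a one-variable barrier can exploit.
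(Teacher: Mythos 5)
Your reductions in the first two paragraphs (boundary values from Lemma \ref{Lemma1.1}, upper bounds on $H$, hence on $L$, $N$, $M$, from Lemma \ref{Lemma1.11} and the Gauss equation) match the paper. The gap is in the barrier argument for the lower bound on $N$, and it is a sign error that kills the construction. Write $w=1/N$ and $v=w-\phi$ with $\phi=C_0+\mu t$, $\mu>0$. At an interior maximum of $v$ one has $v_s=v_t=0$ and, since $(a^{ij})$ is positive semidefinite and $D^2v\le 0$,
$$0\ \ge\ Lv_{tt}-2Mv_{st}+Nv_{ss}\ =\ \frac{A_0}{N^2}-\frac{A_2}{N}\mu,
\qquad\text{i.e.}\qquad \frac{A_2}{N}\,\mu\ \ge\ \frac{A_0}{N^2},$$
not $\tfrac{A_2}{N}\mu\le \tfrac{A_0}{N^2}$ as you assert. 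Because $A_2>0$, taking $\mu$ large makes this inequality \emph{easier} to satisfy, so no contradiction arises: the positive drift $\tfrac{A_2}{N}\partial_t$ works \emph{against} an additive increasing barrier. This is exactly the failure of ``the usual barrier arguments'' that the paper flags for characteristically degenerate boundaries. The paper's device is a \emph{multiplicative} barrier: it applies the maximum principle to $\phi=h/N$ with $h=(1-\lambda t)^{-1/2}$, for which the drift enters $\mathcal L\phi$ with the opposite (good) sign, contributing $+A_2h_t/N^2$, which dominates $hA_0/N^2$ once $\lambda$ is large; and the possibly negative part $-C_2M^2$ of $A_2$ is absorbed by the second-order term $\frac{L}{Nh}(h_{tt}h-2h_t^2)\ge \frac{M^2}{N^2h^2}(h_{tt}h-2h_t^2)>0$, using $L\ge M^2/N$.

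Two further points. Your claim $|M|\le Ct$ ``from Taylor and the Codazzi equation'' is not available at this stage: $M_t=N_s-(B_t/B)M$ requires a pointwise bound on $N_s$ up to $t=0$, which is precisely what Sections \ref{sec-BoundaryHolder}--\ref{sec-BoundaryLipschitz} are devoted to obtaining; here $M$ is only known to be bounded, so you cannot conclude $A_2\ge C_1/2$ pointwise on a thin strip. (The paper's absorption via the $L$-term is designed to avoid needing any smallness of $M$.) Finally, closing the comparison on the outer edge $t=\delta_0$ requires a quantitative lower bound $N(s,\delta_0)\ge 1/C_{\delta_0}$, which the paper gets from the interior principal-curvature bound \eqref{eq-PrincipalLowerBound}; this ingredient is missing from your write-up.
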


\begin{proof}
First, $L\ge 0$. Lemma
\ref{Lemma1.11} and \eqref{mean} imply $N\le C$ and $L\le CB^2$.
Then, (\ref{GC3}) yields $|M|\le\sqrt {LN}\le CB.$

Next, in the
geodesic coordinates as in (\ref{geod1}) and (\ref{geod2}), the
normal curvature in the direction of $\partial_t$ at $p$
equals $N$. We have by \eqref{eq-PrincipalLowerBound}, for $t=\delta_0>0$,
\begin{equation}\label{H3}
N=II(\partial_t,\partial_t)\ge \min\{k_1,k_2\}\ge \frac 1{C_\delta}.
\end{equation}
We now claim there exists a $\delta_0\in (0,1]$ such that, for any $t\in (0,\delta_0)$,
\begin{equation}\label{H3q}N\ge \frac1C,\end{equation} where $C$ is a positive constant
depending only on those quantities in \eqref{maincond}.
%By (\ref{mean}) and $L\ge 0$, we then have a positive lower bound of $H$.
To prove this, we set
$$\Omega_{\delta_0}=\{(s,t):\, s\in[0,2\pi],\ t\in(0,\delta_0)\}.$$
We write \eqref{eq5/N} as
\begin{align*}L\partial_{tt}\left(\frac
1N\right)-2M\partial_{ts}\left (\frac
1N\right)+N\partial_{ss}\left(\frac 1N\right)&\\
+\frac {A_{2}}N\partial_t\left(\frac 1N\right)+\frac
{A_{1}}N\partial_s\left(\frac
1N\right)&=\frac{A_0}{N^2}.\end{align*} Since
$L, M, N$ are all bounded, then, $A_1$,
$A_2$ and  $A_0$  are bounded. Moreover,  $A_2$ satisfies \eqref{eq4/N}.
Set
$$\phi =\frac hN,$$
for a function $h=h(t)$ to be fixed. Then,
\begin{align*}\mathcal L\phi &\equiv L\phi_{tt} -2M\phi_{ts}
+N\phi_{s s} +\left(\frac
{A_{1}}N+\frac{2Mh_t}{h}\right)\phi_s+\left(\frac {A_{2}}N-\frac
{2Lh_t}h\right)\phi_t\\
&=\frac L{Nh}(h_{t t}h-2h^2_t)+A_{2}h_t\frac
1{N^2}+h\frac{A_0}{N^2}.\end{align*}
Set
$$h=\frac{1}{\sqrt {1-\lambda t}}.$$ Then, for $\lambda t<1,$
$$\frac{h_t}{h}=\frac{\lambda}{2(1-\lambda t)},\quad \frac{h_{tt}}{h}
-\frac{2h^2_t}{h^2}=
\frac{\lambda^2}{4(1-\lambda t)^2}.$$
Both expressions are positive. Hence,
by \eqref{eq4/N}, we have, for any $t\in [0,\delta]$, where $\delta$ is
introduced for  \eqref{eq4/N},
\begin{align*}
\frac1h\mathcal L\phi &\ge \frac {M^2}{N^2h^2}(h_{t
t}h-2h^2_t)+\frac
1{N^2}(C_1-C_2M^2)\frac{h_t}{h}+\frac{A_0}{N^2}\\
&\ge\frac {M^2}{N^2}\left(\frac {\lambda^2}{4(1-\lambda
t)^{2}}-\frac { \lambda C_2}{2(1-\lambda t)}\right)+\frac
1{N^2}\left(\frac {\lambda C_1}{2(1-\lambda t)}-|A_0|\right).
\end{align*}
Take
$$\lambda =2C_2+\frac{2}{C_1}\big(\max|A_0|+1\big),$$
and then $\delta_0=\min \{\delta,(2\lambda )^{-1}\}.$ We have, for any $t\in (0,\delta_0)$,
$$\frac 1h\mathcal L\phi\ge\frac {1}{N^2}>0.$$
Assume that $\phi$ attains its maximum in $\bar \Omega_{\delta_0}$ at
some point $p=(s_p, t_p)\in\bar{\Omega}_{\delta_0}.$ The maximum principle implies that
$p\in\partial \Omega_{\delta_0}$.
If $t_p=\delta_0$, then \eqref{H3} implies $N\ge 1/C_{\delta_0}$ and hence
$\phi\le \sqrt 2C_{\delta_0}$ at $p$. If $t_p=0$, Lemma \ref{Lemma1.1} yields a similar estimate.
Hence, $\phi\le C_*$ in $\Omega_{\delta_0}$ and then $N\ge 1/(\sqrt{2}C_*)$.
This finishes the proof of \eqref{H3q}.
\end{proof}

For simplicity, we will write $\delta_0=1$ in the following.
Next, we derive an estimate of $M$.

\begin{lemma}\label{Lemma1.17}
Let $\Sigma$ be an Alexandrov-Nirenberg
surface in $\mathbb R^3$ of class $C^5$ and $\sigma$ be a connected component of
$\partial\Sigma$.
Then, in the geodesic coordinates as in (\ref{geod1}) and
(\ref{geod2}),
$$|M|\le C\sqrt t \quad\text{for any }t \in [0,1],$$
where $C$ is a positive constant depending only on the
quantities in (\ref{maincond}).
\end{lemma}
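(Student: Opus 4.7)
My plan is to apply the maximum principle to the auxiliary function $V=M^{2}-A^{2}t$ on the geodesic coordinate strip $[0,2\pi]\times[0,1]$, choosing the constant $A$ large (depending only on the quantities in \eqref{maincond}). The strategy is to exploit the fact that along $t=0$ we have $M=0$ and $L=0$, so the coefficient matrix $a^{ij}$ of the equation in Lemma \ref{lemma-EqM1} is only positive semidefinite there, but combined with the quadratic gradient term $-\tfrac{2M}{NL}a^{ij}M_{i}M_{j}$ this forces $M$ to vanish like $\sqrt{t}$ at the boundary.

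First I would check the boundary contribution. On $t=0$, Lemma \ref{Lemma1.1} gives $M=0$ and hence $V=0$. On $t=1$, Lemma \ref{Remark1.13} yields $|M|\le C_{0}$, so $V\le C_{0}^{2}-A^{2}\le 0$ as soon as $A\ge C_{0}$. Since $s$ is periodic, the only way $V$ can exceed zero is at an interior maximum $(s_{0},t_{0})$ with $t_{0}\in(0,1)$. Next I would compute, from the non-divergence form \eqref{eq1M},
\begin{equation*}
a^{ij}\partial_{ij}M^{2}+\widehat A_{i}\partial_{i}M^{2}
=\frac{2(KB^{2}+3M^{2})}{NL}\,a^{ij}M_{i}M_{j}-2M\widehat A_{0},
\end{equation*}
using $NL=M^{2}+KB^{2}$ from \eqref{GC3}. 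Since $A^{2}t$ is linear, it drops out of the second-order part, while it contributes $-A^{2}\widehat A_{2}$ to the first-order part; thus the identity for $V$ differs only by that constant term.

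At the putative interior maximum, the first-order terms vanish and $D^{2}V\le 0$, so $a^{ij}\partial_{ij}V\le 0$ because $a^{ij}$ is PSD (its eigenvalues satisfy $NL-M^{2}=KB^{2}\ge 0$). The critical point conditions $\partial_{s}V=0$ and $\partial_{t}V=0$ give $MM_{s}=0$ and $MM_{t}=A^{2}/2$; in particular $M\neq 0$, $M_{s}=0$, and $M_{t}=A^{2}/(2M)$. Substituting, $a^{ij}M_{i}M_{j}=LM_{t}^{2}=LA^{4}/(4M^{2})$, so the quadratic term simplifies to
\begin{equation*}
\frac{(KB^{2}+3M^{2})A^{4}}{2NM^{2}}\ge \frac{3A^{4}}{2N}\ge \frac{3A^{4}}{2C},
\end{equation*}
and the maximum principle forces
\begin{equation*}
\frac{3A^{4}}{2C}\le 2|M|\,|\widehat A_{0}|+|\widehat A_{2}|A^{2}.
\end{equation*}
Because Lemma \ref{Remark1.13} and Remark \ref{rmk-Coeff-M} bound $|M|$, $N$, $|\widehat A_{0}|$ and $|\widehat A_{2}|$ by constants depending only on the quantities in \eqref{maincond}, the right-hand side grows only like $A^{2}$, so choosing $A$ sufficiently large (but still controlled) produces a contradiction. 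Hence $V\le 0$ on the whole strip, giving $|M|\le A\sqrt{t}$ as claimed.

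The conceptually delicate point is that the equation for $M$ degenerates at $t=0$ (both $L\to 0$ and the coefficient $\tfrac{2M}{NL}$ is singular), so one cannot just invoke a standard maximum principle for uniformly elliptic equations. What saves us is that at an interior critical point the vanishing of $M_{s}$ kills every component of $a^{ij}M_{i}M_{j}$ except the $LM_{t}^{2}$ piece, and the dangerous $1/L$ factor cancels cleanly to produce the crucial $A^{4}$ term that dominates the lower order terms. The main task is therefore to verify that no worse gradient singularity intrudes — which is precisely why Lemma \ref{lemma-EqM1} and the structural description of $\widehat A_{0},\widehat A_{1},\widehat A_{2}$ in Remark \ref{rmk-Coeff-M} are set up in the form stated.
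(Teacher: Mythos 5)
Your proposal is correct and follows essentially the same route as the paper: the paper also applies the maximum principle to $m-\lambda t$ with $m=M^2$ (your $V$ with $\lambda=A^2$), derives the inequality for $M^2$ by multiplying \eqref{eq1M} by $2M$, uses the positive semidefiniteness of $(a^{ij})$ together with $NL-M^2=KB^2\ge 0$, and extracts the dominant $\lambda^2/N$ term from the quadratic gradient part exactly as you do via the critical-point condition $2MM_t=A^2$. The only cosmetic difference is that the paper packages the computation as $\mathcal R(m)-\mathcal R(\lambda t)$ to state a linear differential inequality for $w=m-\lambda t$, whereas you evaluate pointwise at the putative interior maximum.
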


\begin{proof} Note that $M$ satisfies \eqref{eq1M} or (\ref{mequation1}).
Set $m=M^2$. Multiplying both sides of (\ref{mequation1}) by $2M$ yields
\begin{align*}
\mathcal R(m)\equiv Lm_{tt}-2Mm_{st}+Nm_{ss}+\widehat A_1m_s+\widehat A_2m_t&\\
-\frac 1{N}m_t^2+\frac {2M}{LN}m_sm_t-\frac {1}{L}m_s^2&\ge
-2\widehat A_0M.\end{align*}
It is easy to see
$$\mathcal R(\lambda t)=\lambda\left(\widehat A_2-\frac {\lambda}{N}\right)
\le -\frac {\lambda^2}{C_1},$$ for some positive constant $C_1$
and sufficiently large constant $\lambda$ under control since
$\widehat A_2$ is bounded by
Remark \ref{rmk-Coeff-M}. Set
$$w=m-\lambda t.$$
By taking the difference $\mathcal R(m)-\mathcal R(\lambda t)$, we have
for any $(s,t)\in [0,2\pi]\times (0,1)$,
\begin{align*}
Lw_{tt}-2Mw_{st}+Nw_{ss}+\left(\widehat A_1+\frac {2Mm_t}{LN}-\frac {m_s}{L}\right)w_s&\\
+\left(\widehat A_2-\frac 1{N}(m_t+\lambda)\right)w_t& \ge \left(\frac
{\lambda^2}{C_1}-C_2\right)>0,\end{align*}
for sufficiently large $\lambda$ under control, since $\widehat A_0$ is bounded by
Remark \ref{rmk-Coeff-M}. Note that
$w=0$ as $t=0$ and $w=m-\lambda\le 0$ as $t=1$ by choosing $\lambda$ large,
since $M$ is bounded. By the maximum principle, we conclude $w\le 0$ and hence,
for any $ (s,t)\in[0,2\pi]\times [0,1]$,
\begin{equation*}\label{mequation4}
|M|^2\le \lambda t.
\end{equation*}
This yields the desired result.
\end{proof}

Our next step is to estimate $|N(s,t)-N(s,0)|$. However,
the barrier argument does not seem to work for this purpose
from the equation (\ref{eq5/N}) since it is characteristically
degenerate along boundary $t=0$. This is the major obstacle we encounter.
We have to employ  different methods in the next two sections.

\section{H\"{o}lder Estimates near Boundary}\label{sec-BoundaryHolder}

In this section, we derive the boundary H\"{o}lder estimates of $N$
in the geodesic coordinates.
The main
technique is the de Giorgi iteration.

We first prove some basic results concerning  weighted
Sobolev spaces. For a domain $G\subset
\mathbb R^2_+=\{(s,t)\in \mathbb R^2:\, t>0\}$, denote by
$\widetilde W^{1,2}(G)$
the completion of
$C^1(\bar G)$ under the  norm
$$\left(\int_G (tu_t^2+u_s^2+u^2)dtds\right)^{\frac 12}.$$
For any $p_0=(s_0,0)$ and any $R>0$, set
$$G_R(p_0)=\{(s,t):\, |s-s_0|<\sqrt{R},\ 0<t<R\}.$$
If no confusion occurs, we simply
write $G_R$.

\begin{lemma}\label{lemma-Sobolev}
$\operatorname{(1)}$ For any $u\in \widetilde W^{1,2}(G)$ with $u=0$ on
$\partial G\cap \mathbb R^2_+$,
\begin{equation}\label{weight1}
\left(\int_{G}u^6dsdt\right)^\frac 13\le C
\int_{G}(tu_t^2+u_s^2)dsdt,
\end{equation}
where $C$  is a universal positive constant,  independent of $G$.

$\operatorname{(2)}$ For any $\epsilon >0$ and
any $u\in C^1(\bar G_1)$ with $|\{(s,t)\in G_1:\,
u(s,t)=0\}|\ge \epsilon$,
\begin{equation}\label{weight2}
\int_{G_1}u^2dsdt\le C_\epsilon \int_{G_1}(tu_t^2+u_s^2)dsdt,
\end{equation}
where $C_\epsilon$ is a positive constant depending only on $\epsilon$.
\end{lemma}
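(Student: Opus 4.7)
The key observation is that the weight $t$ attached to $u_t^2$ is exactly what arises from a radial change of variables that promotes a 2D weighted problem into a 3D isotropic one. Introduce $x = (x_1, x_2) \in \mathbb R^2$ and set
$$v(s, x) = u\bigl(s,\, |x|^2/4\bigr).$$
A direct computation gives $|\nabla_x v|^2 = t\, u_t^2$ (with $t = |x|^2/4$) and $v_s = u_s$, while integration in polar coordinates yields $\int_{\mathbb R^2} f(|x|^2/4)\, dx = 4\pi \int_0^\infty f(t)\, dt$ for any integrable $f$. Consequently, the weighted $L^2$-norm of $\nabla u$ equals (up to $4\pi$) the unweighted $L^2$-norm of $\nabla v$ on the solid-of-revolution domain, and the $L^p$-norm of $u$ equals (up to $4\pi$) that of $v$.

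For Part (1), by density I may assume $u \in C^1(\bar G)$ with $u = 0$ on $\partial G \cap \mathbb R^2_+$. The corresponding $v$ is $C^1$ on the solid-of-revolution domain $\widetilde G \subset \mathbb R^3$, with the axis $\{x = 0\}$ (of 3D measure zero) corresponding to $\{t = 0\}$, and vanishes on the lateral boundary of $\widetilde G$. Extending $v$ by zero to all of $\mathbb R^3$, the classical Gagliardo--Nirenberg--Sobolev inequality $\|v\|_{L^6(\mathbb R^3)} \le C\|\nabla v\|_{L^2(\mathbb R^3)}$ translates, via the two identities above, directly into \eqref{weight1}. It is precisely the Jacobian $r\, dr = 2\, dt$ that matches $t\, u_t^2\, ds\, dt$ with $|\nabla_x v|^2\, ds\, dx$ and raises the effective dimension from $2$ to $3$, producing the Sobolev exponent $6 = 2\cdot 3/(3-2)$.

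For Part (2), transform $u \in C^1(\bar G_1)$ into $v$ on the bounded cylinder $\widetilde G_1 = \{(s, x): |s - s_0| < 1,\ |x| < 2\}$; the hypothesis $|\{u = 0\}| \ge \epsilon$ gives $|\{v = 0\}| \ge 4\pi\epsilon$. Applying the standard Poincar\'e inequality
$$\|v - \bar v\|_{L^2(\widetilde G_1)} \le C\|\nabla v\|_{L^2(\widetilde G_1)},\qquad \bar v = |\widetilde G_1|^{-1}\int_{\widetilde G_1} v,$$
and using $\int_{\{v=0\}}(v - \bar v)^2 = |\{v=0\}|\,\bar v^2 \ge 4\pi\epsilon\,\bar v^2$, I deduce $|\bar v| \le C_\epsilon\|\nabla v\|_{L^2}$. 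Combining with the triangle inequality $\|v\|_{L^2} \le \|v - \bar v\|_{L^2} + |\bar v|\sqrt{|\widetilde G_1|}$ and translating back to $u$ gives \eqref{weight2}. The main point to verify carefully is that $v$ is an admissible Sobolev function across the axis $\{x = 0\}$; this follows because $v$ and its gradient are bounded there (since $v$ is a smooth function of $|x|^2/4$), and the axis has 3D measure zero, so no additional boundary behavior is imposed. All remaining steps are routine invocations of the standard inequalities.
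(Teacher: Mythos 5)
Your proof is correct and follows essentially the same raising-dimension strategy as the paper: your substitution $t=|x|^2/4$ is the paper's $\tau=2\sqrt{t}$ realized as a rotationally symmetric lift into a solid of revolution in $\mathbb R^3$, whereas the paper lifts into the wedge $\{0<\lambda<\tau\}$ and reflects across $\lambda=\tau$; in both cases the weighted energy $\int(tu_t^2+u_s^2)$ becomes the standard three-dimensional Dirichlet energy and the conclusion follows from $H^1(\mathbb R^3)\subset L^6(\mathbb R^3)$ and the Poincar\'e inequality. The only point to tighten is the justification of the zero-extension across the axis $\{x=0\}$: having three-dimensional measure zero is not by itself sufficient (a plane also has measure zero), but the axis is a line of codimension two, hence of zero $H^1$-capacity, and excising a thin tube $\{|x|<\delta\}$, integrating by parts, and letting $\delta\to 0$ (using that $v$ and $\nabla v$ are bounded near the axis) shows the extension is indeed in $H^1(\mathbb R^3)$.
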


The proof is based on the
raising dimension argument.

\begin{proof} Let $G\subset \mathbb R^2_+$ and $u\in \widetilde W^{1,2}(G)$.
Define a transform $T: G\to T(G)$ by
$$ T(s,t)=(s,\tau)\equiv (s,2\sqrt t). $$
Lift
$T(G)$ in $\mathbb R^3$ by defining
$$\widetilde{T(G)}
=\{(s,\tau,\lambda)\in \mathbb R^3:\, (s,\tau)\in T(G), \, 0<\lambda<\tau\}.$$
Then
\begin{equation}\label{weight3} \int_G |u|^pdsdt=\frac 12\int_{T(G)}|u\circ
T^{-1}|^p\tau dsd\tau=\frac 12 ||u\circ
T^{-1}||^p_{L^p(\widetilde{T((G))})},\end{equation} and
\begin{equation}\label{weight4}
\int_G(tu_t^2+u_s^2)dsdt=\frac 12\int_{T(G)}(u_\tau^2+u_s^2)\tau
dsd\tau=\frac 12\|\widetilde \nabla( u\circ
T^{-1})\|^2_{L^2(\widetilde{T(G)})}, \end{equation} where $\widetilde
\nabla=(\partial_s,\partial_\tau,\partial_\lambda)$ is the
gradient in $\mathbb R^3$.

Now let us consider the first part of the
present lemma. It suffices to prove (\ref{weight1})  for all
$u\in C^1(\bar G)$. Let $u\in C^1(\bar G)$ with $u=0$ on $\partial
G\cap \mathbb R^2_+$. Set
$$\tilde u(s,t)=\begin{cases}u(s,t)&\text{for }(s,t)\in G,\\ 0&\text{for }(s,t)
\in\mathbb R^2_+\setminus G.\end{cases}$$
Then, define $v(s,\tau)=\tilde u(s,t)$ and
$$w(s,\tau,\lambda)=\begin{cases}
v(s,\tau)&\text {for }(s,\tau,\lambda)\in\widetilde{T(G)}, \\
v(s,\lambda)&\text {for }\lambda>\tau>0.
\end{cases}$$
Then, by the Sobolev extension, we can extend $w$ to $\mathbb R^3$
by extensions first with respect to the plane $\tau=0$ and
then to the plane $\lambda=0$. By the Sobolev embedding, we have
$w\in H^1(\mathbb R^3) \subset L^6(\mathbb R^3)$  and
$$\left(\int_{\mathbb R^3}w^6dsd\tau d\lambda\right)^{\frac13}\le
C\int_{\mathbb R^3}|\widetilde{\nabla}w|^2dsd\tau d\lambda.$$
 Therefore by
(\ref{weight3}) and (\ref{weight4}), we obtain
\begin{align*}
\left(\int_Gu^6dsdt\right)^{\frac 13}
&\le \left(\frac1{2}\int_{\mathbb R^3}w^6dsd\tau d\lambda\right)^{\frac 13}
\le C\int_{\mathbb R^3}|\widetilde{\nabla}w|^2dsd\tau d\lambda\\
&\le C'\int_{\widetilde{T(G)}}|\widetilde \nabla w|^2dsd\tau d\lambda
=2C'\int_G(tu_t^2+u_s^2)dsdt,
\end{align*}
where $C$ and $C'$ are universal positive constants, independent of $u$.

Next, we consider the second part of the present lemma. Suppose that
$u\in C^1(\bar G_1)$ with $|\{(s,t)\in G_1:\, u(s,t)=0\}|\ge \epsilon>0$. Then,
$$|\{(s,\tau):\,v(s,\tau)=0\}|\ge C\epsilon,$$
and
$$|\{(s,\tau,\lambda)\in \widetilde{T(G_1)}:\, w(s,\tau,\lambda)=0\}|\ge C\epsilon,$$
for some universal
constant $C$.
We now
extend $\widetilde{T(G_1)}$ by reflecting
$\widetilde{T(G_1)}$ with respect to $\lambda=\tau$ to get a
domain $\widehat{T(G_1)}$.
By the well-known Poincar\'{e} inequality,  we get
$$\int_{\widehat{T(G_1)}}w^2dsd\tau d\lambda\le C_\epsilon
\int_{\widehat{T(G_1)}}|\nabla w|^2dsd\tau d\lambda,$$
where $C_\epsilon$ is a positive constant depending only on $\epsilon$. Then
\begin{align*}
\int_{G_1}u^2dsdt &=\int_{\widehat{T(G_1)}}w^2dsd\tau d\lambda
\le C_\epsilon \int_{\widehat{T(G_1)}}|\widetilde
\nabla w|^2dsd\tau d\lambda\\
&\le C_\epsilon\int_{G_1}(tu_t^2+u_s^2)dsdt.
\end{align*}
This completes the proof of the present lemma.
\end{proof}

Next, we discuss the boundary regularity of $N$. We will first formulate several results
for a general class of elliptic equations which are degenerate on boundary.
Consider
\begin{equation}\label{eq-Deg0}
\partial_i(a_{ij}\partial_ju)+b_i\partial_iu=f\quad\text{in }\mathbb R^2_+,\end{equation}
where we write $(\partial_1,\partial_2)=(\partial_s,\partial_t).$ We first assume,
for some positive constant $C_*>0$,
\begin{equation}\label{eq-Deg1}
C_*^{-1}(\xi_1^2+t\xi_2^2)\le a_{ij}\xi_i\xi_j\le C_*(\xi_1^2+t\xi_2^2)
\quad\text{for any }\xi\in\mathbb R^2.\end{equation}
We then have $C_*^{-1}\le a_{11}\le C_*$ by taking $\xi_2=0$ and $C_*^{-1}t\le a_{22}\le C_*t$ by taking
$\xi_1=0$. Then $a_{12}^2\le a_{11}a_{22}\le C_*^2t$. In particular, we have
$a_{2j}=0$ on $t=0$. Concerning $b_1$ and $b_2$,
we assume
$b_2=b_{21}+b_{22}$ such that
\begin{equation}\label{eq-Deg2}
b_{21}\ge 0\quad\text{on }t=0,\end{equation}
and
\begin{equation}\label{eq-Deg3}
|b_1|+ |b_{21}|+|\partial_tb_{21}|\le C_*,\quad |b_{22}|\le C_*\sqrt{t}.\end{equation}

We first derive an energy estimate for \eqref{eq-Deg0}.

\begin{lemma}\label{garding1} Let \eqref{eq-Deg1}, \eqref{eq-Deg2} and
\eqref{eq-Deg3} be assumed and
$u\in \widetilde{W}^{1,2}(\mathbb R^2_+)\cap C^1(\bar{\mathbb R}^2_+)$ satisfy
\eqref{eq-Deg0}. Then, for any
$\varphi\in C_0^\infty(\bar{\mathbb R}^2_+)$,
\begin{equation} \label{401}\int
\varphi^2\left(tu_t^2+u_s^2\right)\leq
C\int
(\varphi^2+t\varphi_t^2+\varphi_s^2+\varphi|\varphi_t|)u^2+\int
\varphi^2f^2,
\end{equation}
where $C$ is a positive constant depending only on $C_*$ in
\eqref{eq-Deg1} and \eqref{eq-Deg3}.\end{lemma}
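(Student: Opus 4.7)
The estimate is of Caccioppoli type for a degenerate elliptic equation, so the strategy is to test the equation against $\varphi^{2}u$, integrate by parts, exploit the degenerate ellipticity \eqref{eq-Deg1}, and absorb error terms through Cauchy--Schwarz with a small parameter. The only nonstandard features are the vanishing of the coefficients $a_{2j}$ on $\{t=0\}$ (which kills the would-be boundary term from the second-order part) and the decomposition $b_{2}=b_{21}+b_{22}$, whose sign assumption \eqref{eq-Deg2} is what produces a favorable boundary contribution from the lower-order term.

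\textbf{Step 1 (energy identity).} Multiply \eqref{eq-Deg0} by $\varphi^{2}u$ and integrate over $\mathbb R^{2}_{+}$. Since $a_{2j}=0$ on $\{t=0\}$ (a direct consequence of \eqref{eq-Deg1}) and $\varphi$ has compact support, integration by parts in the principal part yields no boundary contribution, and we get
\begin{equation*}
\int \varphi^{2}a_{ij}\partial_{i}u\,\partial_{j}u
= -2\int \varphi u\,a_{ij}\partial_{j}u\,\partial_{i}\varphi
+\int \varphi^{2}u\,b_{i}\partial_{i}u
-\int \varphi^{2}uf.
\end{equation*}
By \eqref{eq-Deg1}, the left side dominates $C_{*}^{-1}\int \varphi^{2}(u_{s}^{2}+tu_{t}^{2})$, which is the quantity we want to bound.

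\textbf{Step 2 (cross term and $f$-term).} For the $\partial_{i}\varphi$ term, Cauchy--Schwarz together with $C_{*}^{-1}(\xi_{1}^{2}+t\xi_{2}^{2})\le a_{ij}\xi_{i}\xi_{j}\le C_{*}(\xi_{1}^{2}+t\xi_{2}^{2})$ gives
\begin{equation*}
2\Bigl|\int \varphi u\,a_{ij}\partial_{j}u\,\partial_{i}\varphi\Bigr|
\le \varepsilon\int \varphi^{2}(u_{s}^{2}+tu_{t}^{2})
+C_{\varepsilon}\int u^{2}(\varphi_{s}^{2}+t\varphi_{t}^{2}).
\end{equation*}
The $f$-term is handled trivially: $|\int\varphi^{2}uf|\le\int\varphi^{2}u^{2}+\tfrac14\int\varphi^{2}f^{2}$.

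\textbf{Step 3 (lower-order terms).} The $b_{1}$ term is absorbed directly via $|b_{1}|\le C_{*}$:
\begin{equation*}
\Bigl|\int \varphi^{2}u b_{1}u_{s}\Bigr|
\le \varepsilon\int \varphi^{2}u_{s}^{2}+C_{\varepsilon}\int \varphi^{2}u^{2}.
\end{equation*}
For $b_{22}$, the square-root decay $|b_{22}|\le C_{*}\sqrt{t}$ is exactly what is needed to absorb into the degenerate $tu_{t}^{2}$:
\begin{equation*}
\Bigl|\int \varphi^{2}u b_{22}u_{t}\Bigr|
\le \varepsilon\int \varphi^{2}tu_{t}^{2}+C_{\varepsilon}\int \varphi^{2}u^{2}.
\end{equation*}
The main (and most delicate) step is the $b_{21}$-term: here I would not try to absorb $\varphi^{2}uu_{t}$ directly, but instead integrate by parts in $t$, writing
\begin{equation*}
\int \varphi^{2}u b_{21}u_{t}
=\tfrac12\int \varphi^{2}b_{21}\partial_{t}(u^{2})
=-\tfrac12\int \partial_{t}(\varphi^{2}b_{21})u^{2}
-\tfrac12\int_{\{t=0\}}\varphi^{2}b_{21}u^{2}\,ds.
\end{equation*}
The sign condition \eqref{eq-Deg2}, $b_{21}\ge0$ on $\{t=0\}$, makes the boundary integral nonpositive, so it can simply be discarded after being moved to the other side. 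The remaining interior integral is bounded by $C\int u^{2}(\varphi|\varphi_{t}|+\varphi^{2})$ using the full strength of \eqref{eq-Deg3}, $|b_{21}|+|\partial_{t}b_{21}|\le C_{*}$.

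\textbf{Step 4 (conclusion).} Combining Steps 1--3 and choosing $\varepsilon$ small enough to absorb the $\varphi^{2}(u_{s}^{2}+tu_{t}^{2})$ terms into the left side gives exactly \eqref{401}. I expect the only real hurdle to be Step 3: without the sign hypothesis \eqref{eq-Deg2} one could not control the $b_{21}$-term, since the degenerate weight $t$ on $u_{t}^{2}$ prevents the absorption trick available for $b_{22}$; the integration by parts is essentially forced, and the structural assumption $b_{21}\ge0$ at the boundary is exactly calibrated to make the resulting boundary integral have the right sign.
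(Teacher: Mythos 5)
Your proposal is correct and follows essentially the same argument as the paper: testing against $\varphi^2 u$, using $a_{2j}=0$ on $\{t=0\}$ to kill the boundary term from the principal part, absorbing the $b_1$- and $b_{22}$-terms by Cauchy's inequality, and integrating the $b_{21}$-term by parts in $t$ so that the sign condition \eqref{eq-Deg2} (with $\nu_t=-1$ on $\{t=0\}$) makes the resulting boundary integral nonpositive. The only cosmetic difference is that you estimate the $f$-term immediately rather than at the end.
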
 Here we emphasize
that $\varphi$ is not assumed to be zero on $\{t=0\}$.

\begin{proof} We multiply \eqref{eq-Deg0} by
$-\varphi^2 u$
and integrate by parts. Let $G\subset\mathbb R^2_+$ be a domain such that
$\varphi=0$ in $\mathbb R^2_+\setminus G$. Then,
$$\int_G \varphi^2a_{ij}u_iu_j
=\int_{\partial G}\varphi^2ua_{ij}u_j\nu_i
-2\int_G\varphi ua_{ij} \varphi_iu_j
+\int_G\varphi^2ub_iu_i-\int_G\varphi^2uf.$$
For the boundary integral, we first note $\varphi=0$ on $\partial G\cap \mathbb R^2_+$.
Next, on $\partial G\cap \{t=0\}$, $\nu_1=0$ and $a_{2j}=0$. Hence, boundary integrals
are absent from the expression above.
Next, the Cauchy inequality implies, for $\varepsilon>0$ to be determined,
$$2\varphi ua_{ij}\varphi_i u_j\le \varepsilon \varphi^2a_{ij}u_iu_j+\frac{1}{\varepsilon}
a_{ij}\varphi_i\varphi_j u^2.$$
By \eqref{eq-Deg1}, we have
$$a_{ij}u_iu_j\ge C_*^{-1}(u_s^2+tu_t^2),$$
and
$$a_{ij}\varphi_i\varphi_j\le C_*(\varphi_s^2+t\varphi_t^2).$$
Therefore,
$$\frac{1}{C_*}(1-\varepsilon)\int \varphi^2(u_s^2+tu_t^2)
\le \frac{C_*}{\varepsilon}\int(\varphi_s^2+t\varphi_t^2)u^2
+\int\varphi^2ub_iu_i-\int\varphi^2uf.$$
Next, for the $b_1$-term, we have, by $|b_1|\le C_*$ in \eqref{eq-Deg3},
$$\left|\int \varphi^2ub_1u_s\right|\le \frac{\varepsilon}{C_*}\int\varphi^2u_s^2
+\frac{C_*^3}{\varepsilon}\int\varphi^2u^2.$$
For the $b_2$-term, we write $b_2=b_{21}+b_{22}$ and have, by
$|b_{22}|\le C_*\sqrt t$ in \eqref{eq-Deg3},
$$\left|\int \varphi^2ub_{22}u_t\right|\le \frac{\varepsilon}{C_*}\int\varphi^2tu_t^2
+\frac{C_*^3}{\varepsilon}\int\varphi^2u^2.$$
On the other hand,
$$\int_G\varphi^2b_{21}uu_t=\frac12\int_G\varphi^2b_{21}(u^2)_t
=\frac12\int_{\partial G}\varphi^2b_{21}u^2\nu_t-\frac{1}{2}\int_G(\varphi^2b_{21})_tu^2.$$
On $t=0$, $\nu_2=-1$ and $b_{21}\ge 0$, and hence
$$\int_{\partial G}\varphi^2b_{21}u^2\nu_t\le 0.$$
Therefore, by $|b_{21}|+|\partial_tb_{21}|\le C_*$ in \eqref{eq-Deg3},
$$\int \varphi^2b_{21}uu_t\le -\frac{1}{2}\int(\varphi^2\partial_tb_{21}+2\varphi\varphi_tb_{21})u^2
\le C_*\int(\varphi^2+\varphi|\varphi_t|)u^2.$$
By a simple substitution and taking $\varepsilon=1/4$,   we obtain
$$\int \varphi^2(u_s^2+tu_t^2)
\le C_1\int(\varphi_s^2+t\varphi_t^2+\varphi^2+\varphi|\varphi_t|)u^2
+C_2\int\varphi^2|uf|.$$
Another application of the Cauchy inequality implies the desired result. \end{proof}

In the following, we study estimates of H\"{o}lder norms of solutions near boundary.
Our main tool is an iteration due to de Giorgi. We will follow \cite{Han-Lin2011}
closely.

First, we prove a local $L^\infty$-estimate for subsolutions.

\begin{lemma}\label{moser2}
Let \eqref{eq-Deg1}, \eqref{eq-Deg2} and
\eqref{eq-Deg3} be assumed and $f\in L^q(G_R)$, for some $R\in (0,1]$
and $q>3/2$. Suppose
$u\in C^1(\bar G_R)$ satisfies
$$\partial_i(a_{ij}u_j)+b_iu_i\ge f\quad\text{weakly in }G_R.$$
Then, for any $\theta\in (0,1)$,
\begin{equation}\label{413}
\sup_{G_{\theta R}}u^+\le C\left\{\left(\frac
1{|G_R|}\int_{G_R}u^2\right)^{\frac 12}
+R\left(\frac
1{|G_R|}\int_{G_R}|f|^q\right)^{\frac 1q}\right\},\end{equation}
where $C$ is a positive constant depending only on
$q$, $\theta$ and $C_*$. \end{lemma}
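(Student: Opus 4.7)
The plan is to run a De Giorgi iteration playing the role of Moser's $L^\infty$-bound, with the weighted energy identity of Lemma \ref{garding1} in place of the classical Caccioppoli inequality and the weighted embedding of Lemma \ref{lemma-Sobolev}(1) in place of the usual Sobolev inequality. A key observation is that $|G_R|=2R^{3/2}$ together with the $L^2\to L^6$ gain of Lemma \ref{lemma-Sobolev}(1) means the degenerate framework behaves like the Euclidean setting in \emph{effective dimension} $n=3$. The hypothesis $q>3/2$ is then exactly the classical $q>n/2$ which makes Moser iteration close up, and the factor $R$ multiplying the $L^q$-mean of $f$ in \eqref{413} is consistent with this dimensional count.

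For $k\ge 0$ I would set $u_k=(u-k)^+$ and $A_k=\{u>k\}\cap G_R$. Since $u$ is a subsolution, testing the weak inequality against the admissible nonnegative function $\eta^2 u_k$ yields the same integration-by-parts identity as in the proof of Lemma \ref{garding1} with $u$ replaced by $u_k$; the boundary contribution on $\{t=0\}$ is again controlled using $a_{2j}|_{t=0}=0$ and $b_{21}|_{t=0}\ge 0$. Taking $\eta$ to be a cutoff with $\eta\equiv 1$ on $G_\rho$, $\mathrm{supp}\,\eta\subset G_r$ for $\theta R\le\rho<r\le R$, and derivative bounds $|\eta_s|\le C(r-\rho)^{-1/2}$, $|\eta_t|\le C(r-\rho)^{-1}$, the inequality $t\le R\le 1$ forces $\eta_s^2+t\eta_t^2+\eta^2+\eta|\eta_t|\le C/(r-\rho)$, and Lemma \ref{garding1} produces the Caccioppoli-type bound
\begin{equation*}
\int_{G_\rho}\bigl(tu_{k,t}^2+u_{k,s}^2\bigr)\le \frac{C}{r-\rho}\int_{G_r\cap A_k}u_k^2+C\int_{G_r\cap A_k}f^2.
\end{equation*}

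Next I would apply Lemma \ref{lemma-Sobolev}(1) to $\eta u_k$, which vanishes on $\partial G_r\cap\mathbb R^2_+$, to upgrade this to an $L^6$-bound, and control the forcing term by H\"older together with $\|f\|_{L^q(G_R)}^2|A_k\cap G_r|^{1-2/q}$ (valid because $1-2/q>0$ as $q>3/2>1$). Combining with the elementary comparisons
\begin{equation*}
(h-k)^2|A_h\cap G_\rho|\le \int_{G_\rho}u_k^2,\qquad \int_{G_\rho}u_h^2\le \biggl(\int_{G_\rho}u_k^6\biggr)^{1/3}|A_h\cap G_\rho|^{2/3}
\end{equation*}
for $h>k$ yields a self-improving recursion
\begin{equation*}
\varphi(h,\rho)\le \frac{C\bigl(\varphi(k,r)+M^2\bigr)^{1+\mu}}{(r-\rho)(h-k)^{2\mu}},
\end{equation*}
with $\varphi(k,r)=\int_{G_r\cap A_k}u_k^2$, $M=R\bigl(|G_R|^{-1}\int_{G_R}|f|^q\bigr)^{1/q}$, and some $\mu=\mu(q)>0$.

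To conclude, I would take the shrinking radii $r_j=\theta R+(1-\theta)R\cdot 2^{-j}$ and rising levels $k_j=k_0(1-2^{-j})$, with $k_0$ chosen as a sufficiently large multiple of $\bigl(|G_R|^{-1}\int_{G_R}u^2\bigr)^{1/2}+M$, and invoke the standard iteration lemma to force $\varphi(k_j,r_j)\to 0$, which gives $u^+\le k_0$ on $G_{\theta R}$ as required. The main obstacle is the anisotropic scaling: the cutoff derivatives $|\eta_s|\sim(r-\rho)^{-1/2}$ and $|\eta_t|\sim(r-\rho)^{-1}$ must be balanced against the weight $t$ which appears on $u_t^2$ but not on $u_s^2$ in the energy identity; fortunately the inequality $t\le R$ on $G_R$ makes the two contributions of comparable size and the argument closes up cleanly, while the exponent condition $q>3/2$ enters precisely to ensure $\mu>0$ in the recursion.
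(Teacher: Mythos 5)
Your architecture is exactly the paper's: a De Giorgi iteration on the truncations $(u-k)^+$, with the energy estimate of Lemma \ref{garding1} playing the role of the Caccioppoli inequality, Lemma \ref{lemma-Sobolev}(1) playing the role of the Sobolev inequality, and the standard shrinking-radii/rising-levels lemma closing the argument; the ``effective dimension $3$'' reading of the hypothesis $q>3/2$ is also the right heuristic. There is, however, one genuine gap, precisely in the range $3/2<q<2$. You place $\int_{G_r\cap A_k}f^2$ on the right-hand side of the Caccioppoli inequality and then estimate the forcing contribution by $\|f\|_{L^q}^2|A_k\cap G_r|^{1-2/q}$, asserting that $1-2/q>0$ because $q>3/2$. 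That is false: $1-2/q>0$ requires $q>2$ (for $q=8/5$ one has $1-2/q=-1/4$), and for $q<2$ the quantity $\int f^2$ need not even be finite under the hypothesis $f\in L^q$. The repair is what the paper does: keep the mixed term $\int\varphi^2 u_k f$ without applying Cauchy--Schwarz, bound it by the three-factor H\"older inequality
$$\int\varphi^2 u_k f\le\Bigl(\int(\varphi u_k)^6\Bigr)^{\frac16}\Bigl(\int(\varphi f)^q\Bigr)^{\frac1q}\,|\{\varphi u_k\neq0\}|^{1-\frac16-\frac1q},$$
absorb the $L^6$ factor into the left-hand side via Lemma \ref{lemma-Sobolev}(1) and Young's inequality, and only afterwards pass from $L^6$ to $L^2$ on the level set. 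This yields the measure exponent $\tfrac73-\tfrac2q$ accompanying $\|f\|_{L^q}^2$, and the requirement that this exceed $1$ is exactly $q>3/2$.

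A smaller slip: the cutoff bounds $|\eta_s|\le C(r-\rho)^{-1/2}$ and the resulting estimate $\eta_s^2+t\eta_t^2+\eta^2+\eta|\eta_t|\le C/(r-\rho)$ are not correct. Since $G_r=\{|s-s_0|<\sqrt r,\ 0<t<r\}$ and $\sqrt r-\sqrt\rho=(r-\rho)/(\sqrt r+\sqrt\rho)<\sqrt{r-\rho}$, the transition zone in $s$ has width of order $(r-\rho)/\sqrt R$, so $|\eta_s|$ is of order $(r-\rho)^{-1}$ rather than $(r-\rho)^{-1/2}$; likewise $t\eta_t^2\le R(r-\rho)^{-2}$ cannot be improved to $(r-\rho)^{-1}$ since $t$ ranges up to $R$, not up to $r-\rho$. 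The correct right-hand side is $C(r-\rho)^{-2}$, as in the paper. This only changes the power of $(r-\rho)^{-1}$ entering the recursion and is harmless for the iteration, but it should be stated correctly.
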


\begin{proof} For simplicity, we assume $R=1$.
Let $\varphi $ be a smooth cutoff function with support in
$G_R\cup\{(s,0):\, |s|<1\})$ and
$0\le\varphi\le 1$, and set
$\bar u=(u-k)^+$ for some $k\ge 0$. Multiply
the differential inequality by $-\varphi^2\bar u$
and integrate in $G_1$. Proceeding as in the proof of Lemma \ref{garding1}, we have
$$\int\varphi^2(\bar u_s^2+t\bar u_t^2)
\le C\int\big(\varphi^2+\varphi_s^2+t\varphi_t^2+\varphi|\varphi_t|\big)\bar u^{2}+\int\varphi^2\bar uf,$$
and then
$$\int\big((\partial_s(\varphi\bar u))^2
+t(\partial_t(\varphi\bar u))^2\big)
\le C\int\big(\varphi^2+\varphi_s^2+t\varphi_t^2
+\varphi|\varphi_t|\big)\bar u^{2}+\int\varphi^2\bar uf.$$
Lemma
\ref{weight1}(1) implies
$$\left(\int\varphi^6\bar u^{6}\right)^{\frac13}
\le C\int\big(\varphi^2+\varphi_s^2+t\varphi_t^2+\varphi|\varphi_t|\big)\bar u^{2}
+\int\varphi^2\bar uf.$$
By the H\"{o}lder inequality, we have
\begin{align*}
\int\varphi^2\bar u f&\le \left(\int(\varphi\bar u)^6\right)^{\frac16}
\left(\int(\varphi f)^q\right)^{\frac1q}
|\{\varphi \bar u\neq 0\}|^{1-\frac16-\frac1q}\\
&\le \frac12\left(\int\varphi^6\bar u^{6}\right)^{\frac13}
+\frac12\|f\|_{L^q}^2|\{\varphi \bar u\neq 0\}|^{\frac53-\frac2q}, \end{align*}
and hence
$$\left(\int\varphi^6\bar u^{6}\right)^{\frac13}
\le C\int\big(\varphi^2+\varphi_s^2+t\varphi_t^2+\varphi|\varphi_t|\big)\bar u^{2}
+\|f\|_{L^q}^2|\{\varphi \bar u\neq 0\}|^{\frac53-\frac2q}.$$
By the H\"{o}lder inequality again, we have
$$\int(\varphi \bar u)^2\le \left(\int\varphi^6\bar u^{6}\right)^{\frac13}
|\{\varphi \bar u\neq 0\}|^{\frac23},$$
and hence
$$\int\varphi^2\bar u^2\le C\int\big(\varphi^2+\varphi_s^2+t\varphi_t^2+\varphi|\varphi_t|\big)\bar u^{2}
|\{\varphi \bar u\neq 0\}|^{\frac23}\\
+\|f\|_{L^q}^2|\{\varphi \bar u\neq 0\}|^{\frac73-\frac2q}.$$
In the following, we take
$$\varepsilon=\min\left\{\frac23, \frac43-\frac2q\right\}.$$
Then,
%from  $|G_R|\le |G_1|\le 2$ as $R\le 1$ it follows that
$$\int\varphi^2\bar u^2\le C\int\big(\varphi^2+\varphi_s^2+t\varphi_t^2+\varphi|\varphi_t|\big)\bar u^{2}
|\{\varphi \bar u\neq 0\}|^{\varepsilon}\\
+\|f\|_{L^q}^2|\{\varphi \bar u\neq 0\}|^{1+\varepsilon}.$$
Set, for any $r\in (0,1]$ and $k\ge 0$,
$$A(k,r)=\{(s,t)\in G_r:\, u(s,t)\ge k\}.$$
For any $0<r<R< 1$, we take a cutoff function $\varphi$ such that
$\varphi=1$ in $G_r$ and $\varphi=0$ in $G_1\setminus G_R$.
Then,
$$\varphi^2+\varphi_s^2+t\varphi_t^2+\varphi|\varphi_t|\le \frac{C}{(R-r)^2},$$
and hence
$$\int_{A(k,r)}(u-k)^2\le
C\left\{\frac{1}{(R-r)^2}\int_{A(k,R)}(u-k)^2|A(k,R)|^\varepsilon
+\|f\|_{L^q}^2|A(k,R)|^{1+\varepsilon}\right\}.$$
For any $h>k\ge 0$, we have
$$\int_{A(h,R)}(u-h)^2\le \int_{A(k,R)}(u-k)^2,$$
and
$$|A(h,R)|=|G_R\cap \{u-k>h-k\}|\le \frac{1}{(h-k)^2}\int_{A(k,R)}(u-k)^2.$$
Hence,
\begin{align*}
\int_{A(h,r)}(u-h)^2&\le
C\left\{\frac{1}{(R-r)^2}\int_{A(h,R)}(u-h)^2
+\|f\|_{L^q}^2|A(h,R)|\right\}|A(h,R)|^\varepsilon\\
&\le C\left\{\frac{1}{(R-r)^2}
+\frac{1}{(h-k)^2}\|f\|_{L^q}^2\right\}
\frac{1}{(h-k)^{2\varepsilon}}
\left(\int_{A(k,R)}(u-k)^2\right)^{1+\varepsilon}.\end{align*}
In summary, we obtain,
for any $0<r<R< 1$ and $0\le k<h$,
$$\|(u-h)^+\|_{L^2(G_r)}\le
C\left\{\frac{1}{R-r}
+\frac{1}{h-k}\|f\|_{L^q(G_1)}\right\}\frac{1}{(h-k)^\varepsilon}
\|(u-k)^+\|_{L^2(G_R)}^{1+\varepsilon}.$$
For any $\theta\in (0,1)$, a standard iteration yields
$$\sup_{G_{\theta}} u^+\le C\left\{\|u^+\|_{L^2(G_1)}+
\|f\|_{L^q(G_1)}\right\}.
$$
This is the desired result.
\end{proof}

Next, we prove a lower bound for positive supersolutions.

\begin{lemma}\label{moser2a}
Let \eqref{eq-Deg1}, \eqref{eq-Deg2} and
\eqref{eq-Deg3} be assumed and $f\in L^q(G_1)$, for some $q>3/2$. Suppose
$u\in C^1(\bar G_1)$ is positive and satisfies
$$\partial_i(a_{ij}u_j)+b_iu_i\le f\quad\text{weakly in }G_1.$$
Then, for any $\varepsilon\in (0,1)$, there exist constants
$\delta>0$ and $C>1$, depending only on $q$,
$\varepsilon$ and $C_*$ in \eqref{eq-Deg1} and
\eqref{eq-Deg3}, such that, if
$$|\{(x,t)\in G_1:\, u(s,t)\ge \frac12\}|\ge \varepsilon|G_1|, $$
and
$$\|f\|_{L^q(G_1)}\le \delta,$$
then,
\begin{equation}\label{413z}
\inf_{G_{1/2}}u\ge \frac1C.\end{equation}
\end{lemma}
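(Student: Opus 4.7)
The plan is to prove this weak Harnack-type lower bound by a de Giorgi iteration dual to the one carried out in Lemma \ref{moser2}, applied to a logarithmic transform of the positive supersolution $u$, following the classical Moser--de Giorgi scheme (as in \cite{Han-Lin2011}) but executed with the weighted Sobolev--Poincar\'e inequalities of Lemma \ref{lemma-Sobolev} and with the degenerate-boundary integration by parts of Lemma \ref{garding1}. The first step is a logarithmic Caccioppoli estimate: set $K=\|f\|_{L^q(G_1)}+\delta$, $\bar u=u+K$, and $v=\log(1/\bar u)$; then $\bar u$ satisfies the same supersolution inequality with forcing $f$, and the shift guarantees that $|f|/\bar u$ is bounded in $L^q$ by $1$. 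Testing the inequality against $-\varphi^2/\bar u$ and integrating by parts in the manner of Lemma \ref{garding1}, using \eqref{eq-Deg1} and the sign of $b_{21}$ on $\{t=0\}$, I would obtain
\[
\int\varphi^2(v_s^2+t v_t^2)\le C\int(\varphi^2+\varphi_s^2+t\varphi_t^2+\varphi|\varphi_t|)+C\int\varphi^2,
\]
after absorbing the quadratic gradient term $a_{ij}v_iv_j$ on the left, the $b_i v_i$ contribution via Young's inequality (with $b_{21}v_t$ handled, as in Lemma \ref{garding1}, by integrating by parts in $t$ and using the sign on the boundary), and the $f/\bar u$ term using its uniform $L^q$-bound.

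Next, I would exploit the measure hypothesis to get an initial $L^2$ control on $v$. Since $u\ge 1/2$ on a set $E\subset G_1$ with $|E|\ge\varepsilon|G_1|$, one has $v\le \log\bigl(2/(1/2+K)\bigr)\le \log 4$ on $E$ provided $\delta$ is small enough. Applying the weighted Poincar\'e inequality Lemma \ref{lemma-Sobolev}(2) to the positive truncation $w=(v-\log 4)^+$, which vanishes on $E$, and combining it with the Caccioppoli estimate, one obtains a bound of the form $\|w\|_{L^2(G_{3/4})}\le C_\varepsilon$.

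The third step is the de Giorgi level-set iteration. For $k\ge\log 4$ the higher truncations $w_k=(v-k)^+$ inherit, on $\{v>k\}$, a Caccioppoli inequality of the same shape, with right-hand side controlled by $|\{w_k>0\}|$. This is exactly the input needed for the level-set iteration carried out in the proof of Lemma \ref{moser2}, with Lemma \ref{lemma-Sobolev}(1) in place of the classical Sobolev inequality; that iteration yields $\sup_{G_{1/2}}w\le C_\varepsilon$, so $v\le C_\varepsilon$ on $G_{1/2}$, and therefore $u\ge e^{-C_\varepsilon}-K\ge 1/C$ once $\delta$ is chosen sufficiently small.

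The main obstacle is the logarithmic Caccioppoli of Step 1: the natural test function $\varphi^2/u$ is singular where $u$ is small, which forces the shift $\bar u=u+K$ with $K\sim\|f\|_{L^q}$, and this is precisely why the smallness hypothesis $\|f\|_{L^q}\le\delta$ is imposed --- it ensures that the shift preserves the measure hypothesis ($u+K\ge 1/2$ still holds on $E$) and that the inhomogeneity can be absorbed into the energy inequality. Once the log estimate is in hand, the remaining two steps are routine adaptations of the arguments already developed in Lemmas \ref{garding1} and \ref{moser2}.
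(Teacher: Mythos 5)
Your overall architecture matches the paper's: pass to $v=\log(1/(u+\delta))$, use the measure hypothesis together with the weighted Poincar\'e inequality of Lemma \ref{lemma-Sobolev}(2) to control $v^+$ in $L^2$, and then run the subsolution iteration of Lemma \ref{moser2} on $v$ to bound $\sup_{G_{1/2}}v^+$. However, your Step 1 --- the ``clean'' logarithmic Caccioppoli estimate with right-hand side depending only on the cutoff --- has a genuine gap, and it is precisely the point where the degenerate structure bites. After testing with $\varphi^2/\bar u$, the drift contribution contains the term $\int\varphi^2 b_{21}\partial_t v$. Since $b_{21}$ is merely bounded (it is \emph{not} $O(\sqrt t)$; only $b_{22}$ is), Young's inequality against the available left-hand coercivity $t\,v_t^2$ produces a factor $b_{21}^2/t$, which is not integrable near $t=0$, so absorption fails. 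Your fallback, ``integrate by parts in $t$ as in Lemma \ref{garding1} and use the boundary sign,'' does not transfer: in Lemma \ref{garding1} the term is $b_{21}u\,u_t=\tfrac12 b_{21}\partial_t(u^2)$ and the boundary term is signed because $u^2\ge 0$, whereas here $v=\log(1/\bar u)$ has no sign, so the boundary integral $-\int_{\{t=0\}}\varphi^2 b_{21}v$ is uncontrolled and the bulk term $\int\partial_t(\varphi^2 b_{21})\,v$ reintroduces $v$ itself (not just the cutoff) on the right.

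The paper's proof is built exactly to dodge this: it never proves a clean log-Caccioppoli. Instead it tests with the truncated function $\bigl((u+\delta)^{-1}-1\bigr)^+\varphi^2$ and rewrites $(1-u-\delta)^+b_{21}\partial_t v^+$ as $b_{21}\partial_t\bigl[(\log\frac{1}{u+\delta})^+-(1-u-\delta)^+\bigr]$; the pointwise inequality $(\log\frac{1}{u+\delta})^+>(1-u-\delta)^+$ gives the boundary term the right sign, and the bulk term is only \emph{linear} in $v^+$. The resulting energy inequality \eqref{eq-SuperSolu2a} still carries $\tau\int(v^+)^2$ on the right, which is then reabsorbed using Lemma \ref{lemma-Sobolev}(2) (via the measure hypothesis) together with an iteration over nested domains $G_{\theta_1}\subset G_{\theta_2}$. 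If you replace your Step 1 by this truncated-test-function argument and the absorption-by-iteration step, the rest of your outline (Steps 2--3 and the final choice of $\delta$) goes through as you describe.
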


\begin{proof} Let $\varphi$ be a nonnegative smooth cutoff function with support
in $G_1\cup \{(s,0): |s|<1\}$. Then,
\begin{equation}\label{eq-SuperSolu0}
\int a_{ij}u_i\varphi_j-\int b_iu_i\varphi\ge -\int
f\varphi.\end{equation} If $f$ is not identically zero, we take
$\delta=\|f\|_{L^q(B_1)}$. Otherwise, we take an arbitrary
$\delta>0$.\\
Now by replacing $\varphi$ by $\varphi/(u+\delta)$ in
\eqref{eq-SuperSolu0}, we have
$$-\int a_{ij}\frac{u_iu_j}{(u+\delta)^2}\varphi
+\int a_{ij}\frac{u_i}{u+\delta}\varphi_j -\int
b_i\frac{u_i}{u+\delta}\varphi\ge -\int
\frac{f}{u+\delta}\varphi.$$  Then setting
$$v=\log\frac{1}{u+\delta},$$
we get
$$-\int a_{ij}v_iv_j\varphi-\int a_{ij}v_i\varphi_j+\int b_iv_i\varphi\ge
-\int \frac{f}{u+\delta}\varphi.$$
In particular, $v$ satisfies
$$\int a_{ij}v_i\varphi_j-\int b_iv_i\varphi\le
\int \frac{f}{u+\delta}\varphi.$$
The choice of $\delta$ implies $\|f/\delta\|_{L^q(G_1)}\le 1$.
Then, for any $\theta\in (1/2, 1)$, Lemma \ref{moser2} implies
\begin{equation}\label{eq-SuperSolu1}
\sup_{G_{1/2}}(v^+)^2\le C\left\{\int_{G_\theta}(v^+)^2+1\right\},\end{equation}
where $C$ is a positive constant depending only on $q$, $\theta$ and $C_*$ in
\eqref{eq-Deg1} and
\eqref{eq-Deg3}.

Now, replace $\varphi$ in \eqref{eq-SuperSolu0} by
$$\left(\frac{1}{u+\delta}-1\right)^+\varphi^2.$$
Then,
we have
\begin{align*}\int a_{ij}\partial_iv^+\partial_jv^+\varphi^2
&\le -2\int \varphi(1-u-\delta)^+a_{ij}\partial_iv^+\varphi_j\\
&\qquad +
\int \varphi^2(1-u-\delta)^+b_i\partial_iv^+
+\int \varphi^2\frac{f}{u+\delta}(1-u-\delta)^+.\end{align*}
By writing $b_2=b_{21}+b_{22}$, we now consider the $b_{21}$-term and write
\begin{align*}\int_{G_1} \varphi^2(1-u-\delta)^+b_{21}\partial_tv^+&=
\int_{G_1}\varphi^2b_{21}\partial_t\left[\left(\log\frac{1}{u+\delta}\right)^+-
(1-u-\delta)^+\right]\\
& =-\int_{G_1}
\partial_t(\varphi^2b_{21})\left[\left(\log\frac{1}{u+\delta}\right)^+-
(1-u-\delta)^+\right]\\
&\qquad+\int_{\partial G_1} \varphi^2b_{21}
\left[\left(\log\frac{1}{u+\delta}\right)^+-
(1-u-\delta)^+\right]\nu_t.\end{align*} Note that $\varphi=0$ on
$\partial G_1\setminus\{t=0\}$ and $\nu_t=-1$ and $b_{21}\ge 0$ on
$\{t=0\}$. Also, for $u+\delta<1$,
\begin{equation}\label{eq-Super3}
\left(\log\frac{1}{u+\delta}\right)^+> (1-u-\delta)^+.
\end{equation} Therefore,
$$\int_{G_1} \varphi^2(1-u-\delta)^+b_{21}\partial_tv^+\le
\int_{G_1} |\partial_t(\varphi^2b_{21})|\left(\log\frac{1}{u+\delta}\right)^+,$$
and hence
\begin{align*}\int a_{ij}\partial_iv^+\partial_jv^+\varphi^2
&\le -2\int \varphi(1-u-\delta)^+a_{ij}\partial_iv^+\varphi_j\\
&\qquad+\int \varphi^2(1-u-\delta)^+(b_1\partial_sv^++b_{22}\partial_tv^+)\\
&\qquad +
\int |\partial_t(\varphi^2b_{21})|v^+
+\int \varphi^2\frac{f}{u+\delta}(1-u-\delta)^+.\end{align*}
By proceeding as in the proof of Lemma \ref{garding1}, we have
$$\int \varphi^2\big(t(\partial_tv^+)^2+(\partial_sv^+)^2\big)\le C
\left\{\int(t\varphi_t^2+\varphi_s^2+\varphi^2)
+\int(\varphi+|\varphi_t|)\varphi v^++\int\varphi^2
\frac{f}{\delta}\right\}.$$
The choice of $\delta$ implies $\|f/\delta\|_{L^q(G_1)}\le 1$. Hence,
for any $\theta_1<\theta_2<1$, we take
$\varphi=1$ in $G_{\theta_1}$ and $\varphi=0$ in $G_1\setminus G_{\theta_2}$. Then,
for any $\tau\in (0,1)$ to be determined, we have
\begin{equation}\label{eq-SuperSolu2a}\int_{G_{\theta_1}}
\big(t(\partial_tv^+)^2+(\partial_sv^+)^2\big)\le
\frac{C_\tau}{(\theta_2-\theta_1)^2}+\tau\int_{G_{\theta_2}}(v^+)^2.\end{equation}
Note \begin{align*}
&\, |\{(s,t)\in G_{\theta_1}:\, v^+=0\}|\\
\ge &\, |\{(s,t)\in
G_1:\, u+\delta\ge 1\}|-|G_1|+|G_{\theta_1}|\\
\ge&\, |G_{\theta_1}|-(1-\varepsilon)|G_1|
=\left(1-\frac{1-\varepsilon}{\theta_1^{\frac 32}}\right)|G_{\theta_1}|
\ge \frac12\varepsilon|G_{\theta_1}|,
\end{align*}
by taking $\theta_1$ such that
$$\theta_0\equiv
\max\left\{\frac12, \frac{1-\varepsilon}{1-\frac{\varepsilon}{2}}\right\}
<\theta_1<1.$$
Then Lemma \ref{weight1}(2) implies
\begin{equation}\label{eq-SuperSolu2b}
\int_{G_{\theta_1}}(v^+)^2\le C\int_{G_{\theta_1}}\big(t(v^+_t)^2
+(v_s^+)^2\big)\ \text { for all }\theta_1 \ge \theta_0.
\end{equation}
It must be emphasized that $C$ in \eqref{eq-SuperSolu2b} depends
on $\varepsilon$ through $\theta_0$, and is independent of
$\theta_1$. By combining \eqref{eq-SuperSolu2a} and
\eqref{eq-SuperSolu2b}, we have
$$\int_{G_{\theta_1}}(v^+)^2\le \frac{C_\tau}{(\theta_2-\theta_1)^2}+C\tau\int_{G_{\theta_2}}(v^+)^2.$$
Now choose $\tau$ such that $C\tau=1/2$. We obtain, for any
$\theta_0<\theta_1<\theta_2<1$,
$$\int_{G_{\theta_1}}(v^+)^2\le \frac{C_\tau}{(\theta_2-\theta_1)^2}+\frac12\int_{G_{\theta_2}}(v^+)^2.$$
A standard iteration yields, for any $\theta_0<\theta<1$,
\begin{equation}\label{eq-SuperSolu2}
\int_{G_{\theta}}(v^+)^2\le \frac{C}{(1-\theta)^2}.
\end{equation}

By combining \eqref{eq-SuperSolu1} and \eqref{eq-SuperSolu2}
and fixing a $\theta\in (\theta_0,1)$, we obtain
$$\sup_{G_{1/2}}(v^+)^2\le C,$$
and hence
$$\inf_{G_{1/2}}u+\delta\ge e^{-C}.$$
We note that the constant $C$ above is independent of $\delta$.
If $f\equiv0$, we simply let $\delta\to0$.
Otherwise, by taking
$\delta=e^{-C}/2$, we have the desired estimate.
\end{proof}

Now, we are ready to prove an estimate of  boundary H\"{o}lder norms.

\begin{theorem}\label{moser2b}
Let \eqref{eq-Deg1}, \eqref{eq-Deg2} and
\eqref{eq-Deg3} be assumed and $f\in L^q(G_1)$, for some $q>3/2$. Suppose
$u\in C^1(\bar G_1)$ satisfies
$$\partial_i(a_{ij}u_j)+b_iu_i= f\quad\text{in }G_1.$$
Then, for some $\alpha\in (0,1)$,
\begin{equation}\label{418z}
|u(s,t)-u(s,0)|\le C\left(\sup_{G_1}|u|+\|f\|_{L^q(G_1)}\right)t^\alpha
\quad\text{for any }(s,t)\in G_{1/2},\end{equation}
where $\alpha$ and $C$ are positive constants depending only on
$q$ and $C_*$
in \eqref{eq-Deg1} and
\eqref{eq-Deg3}.
\end{theorem}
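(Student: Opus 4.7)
The plan is to derive \eqref{418z} from an oscillation-decay argument at the degenerate boundary, with Lemma~\ref{moser2a} playing the role usually played by the weak Harnack inequality. Fix a base point $p_0 = (s_0, 0) \in \bar G_{1/2}$ and set $\omega(r) := \operatorname{osc}_{G_r(p_0)} u$. The goal is to show that there exist constants $\theta \in (0,1)$ and $\mu > 0$, depending only on $q$ and $C_*$, such that
\begin{equation}\label{eq-OscDecay5}
\omega(r/2) \le \theta\, \omega(r) + C\, r^\mu \|f\|_{L^q(G_1)} \quad\text{for all } 0 < r \le 1/2.
\end{equation}
A standard dyadic iteration then gives $\omega(r) \le C(\sup|u| + \|f\|_{L^q})\,r^\alpha$ for some $\alpha \in (0,1)$, and since $u(s,t)$ and $u(s,0)$ both lie in the extremal interval on $G_{2t}(s,0)$ whenever $(s,t) \in G_{1/2}$ with $t\le 1/4$, this yields \eqref{418z}.

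The first step is an anisotropic rescaling adapted to \eqref{eq-Deg0}: for $r \in (0,1]$, set $v(\tilde s, \tilde t) = u(s_0 + \sqrt{r}\,\tilde s,\, r\tilde t)$, which maps $G_r(p_0)$ onto $G_1$. A direct computation shows that $v$ satisfies a divergence-form equation of the same type with rescaled coefficients $\tilde a_{11} = a_{11}$, $\tilde a_{12} = a_{12}/\sqrt{r}$, $\tilde a_{22} = a_{22}/r$, $\tilde b_1 = \sqrt{r}\, b_1$, $\tilde b_{21} = b_{21}$, $\tilde b_{22} = b_{22}$ (evaluated at the scaled arguments), and new source $\tilde f = r\, f$. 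Each of \eqref{eq-Deg1}--\eqref{eq-Deg3} is preserved with the same constant $C_*$; in fact $|\tilde b_{22}|\le C_*\sqrt{r}\,\sqrt{\tilde t}$ and $|\partial_{\tilde t}\tilde b_{21}| \le C_* r$ improve for $r\le 1$, while the sign condition $\tilde b_{21}\ge 0$ on $\tilde t = 0$ is inherited unchanged. Finally $\|\tilde f\|_{L^q(G_1)} = r^{\mu}\|f\|_{L^q(G_r(p_0))}$ with $\mu := 1 - \tfrac{3}{2q} > 0$, thanks to the hypothesis $q > 3/2$.

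To derive \eqref{eq-OscDecay5}, write $M = \sup_{G_1} v$, $m = \inf_{G_1} v$, and bisect at $(M+m)/2$: one of the sets $E_\pm = \{\pm(v - (M+m)/2) \ge 0\}\cap G_1$ has measure at least $|G_1|/2$. Suppose $|E_-| \ge |G_1|/2$ (the other case is symmetric). For any $\eta > 0$ the function $w_\eta := 2(M - v + \eta)/(M - m + 2\eta)$ is strictly positive, bounded, satisfies $w_\eta \ge 1$ on $E_-$, and is a supersolution of the rescaled equation with source $-2\tilde f/(M - m + 2\eta)$. Let $\delta$ be the constant from Lemma~\ref{moser2a} for the choice $\varepsilon = 1/2$. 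If $r^\mu\|f\|_{L^q} \le \tfrac{\delta}{4}(M-m)$, then for $\eta$ small the source of $w_\eta$ has $L^q$-norm below $\delta$, so Lemma~\ref{moser2a} gives $\inf_{G_{1/2}} w_\eta \ge 1/C$; sending $\eta \to 0$ yields $\sup_{G_{1/2}} v \le M - (M-m)/(2C)$, which rescales back to $\omega(r/2) \le (1 - 1/(2C))\,\omega(r)$. If instead $r^\mu\|f\|_{L^q} > \tfrac{\delta}{4}(M-m)$, then $\omega(r)$ is already controlled by $r^\mu\|f\|_{L^q}$, and \eqref{eq-OscDecay5} is immediate. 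This establishes \eqref{eq-OscDecay5} with $\theta = 1 - 1/(2C)$.

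The main technical obstacle I expect is the interplay between the smallness hypothesis on the source in Lemma~\ref{moser2a} and the dichotomy in the oscillation-decay step: one must calibrate the threshold in $M-m$ against $r^\mu\|f\|_{L^q}$ so that in one alternative the lemma applies, and in the other the desired inequality is trivial. A secondary point is verifying that the parabolic-type scaling genuinely preserves \eqref{eq-Deg1}--\eqref{eq-Deg3} with uniform constants, an asymmetric check because the exponents $\sqrt{r}$ in $s$ and $r$ in $t$ are exactly what makes the structure invariant. Once \eqref{eq-OscDecay5} is in hand, the dyadic iteration and the extraction of the pointwise bound $|u(s,t)-u(s,0)|\le \omega(2t)$ are then entirely standard.
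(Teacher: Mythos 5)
Your proposal is correct and follows essentially the same route as the paper: the same oscillation-decay inequality $\omega(r/2)\le\gamma\,\omega(r)+Cr^{1-\frac{3}{2q}}\|f\|_{L^q}$, proved by the same dichotomy on whether the source term dominates the oscillation, with Lemma~\ref{moser2a} applied to the normalized function on whichever half of the bisection has majority measure, followed by the standard dyadic iteration. The only (harmless) differences are that you write out the anisotropic scaling and the preservation of \eqref{eq-Deg1}--\eqref{eq-Deg3} explicitly, where the paper reduces to $r=1$ "by a simple scaling," and that you add an $\eta$-regularization to keep the supersolution strictly positive.
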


\begin{proof} Set, for any $r\le 1$,
$$M(r)=\sup_{G_r}u,\quad m(r)=\inf_{G_r}u,$$
and
$$
\omega(r)=M(r)-m(r).$$
We now claim, for any $r\le 1$,
\begin{equation}\label{eq-Holder1}
\omega\left(\frac r2\right)\le \gamma\omega(r)+Cr^{1-\frac{3}{2q}}\|f\|_{L^q(G_r)},
\end{equation}
where $\gamma\in (0,1)$ and $C>1$ are constants depending only on
$q$ and $C_*$
in \eqref{eq-Deg1} and
\eqref{eq-Deg3}.
By a simple iteration, we have, for any $r\le 1/2$,
$$\omega(r)\le  Cr^\alpha\left\{\omega(1)+\|f\|_{L^q(G_1)}\right\},$$
where $\alpha\in (0,1)$ and $C>1$ are constants depending only on
$q$ and $C_*$
in \eqref{eq-Deg1} and
\eqref{eq-Deg3}.

We now prove \eqref{eq-Holder1} for $r=1$. The general case follows from
a simple scaling. Let $\varepsilon=1/2$ and $\delta$ be determined as in
Lemma \ref{moser2a}. If
$$\delta \omega(1)\le \|f\|_{L^q(G_1)},$$
then,
\begin{equation}\label{eq-Holder2}
\omega\left(\frac 12\right)\le \omega(1)\le \frac1\delta\|f\|_{L^q(G_1)}.
\end{equation}
%and (\ref{eq-Holder1}) is valid if we take the constant $C\ge 1/\delta$.
Next, we assume
$$\|f\|_{L^q(G_1)}\le \delta \omega(1).$$
We note that $u/\omega(1)$ satisfies
$$\partial_i\left(a_{ij}\partial_j\left(\frac{u}{\omega(1)}\right)\right)
+b_i\partial_i\left(\frac{u}{\omega(1)}\right)
=\frac{f}{\omega(1)}\quad\text{in }G_1.$$ Hence
$$\left\|\frac f{\omega(1)}\right\|_{L^q(G_1)}\le \delta$$
by the previous assumption. We consider the following two
cases:
\begin{equation}\label{eq-caseA}
|\{(s,t)\in G_1:\, \frac {u-m(1)}{M(1)-m(1)}\ge
\frac12\}|\ge \frac 12|G_1|,
\end{equation} and
\begin{equation}\label{eq-caseB}
|\{(s,t)\in G_1:\, \frac {M(1)-u}{M(1)-m(1)}\ge
\frac12\}|\ge \frac 12|G_1|.
\end{equation}
If \eqref{eq-caseA} holds, we  apply Lemma \ref{moser2a} to
$(u-m(1))/(M(1)-m(1))$ and get
$$m\left(\frac12\right)-m(1)\ge \frac 1C(M(1)-m(1)). $$
If \eqref{eq-caseB} holds, we  apply Lemma \ref{moser2a} to
$(M(1)-u)/(M(1)-m(1))$ and get
$$M(1)-M\left(\frac12 \right)\ge \frac 1C(M(1)-m(1)). $$
Since $m(1/2)\ge m(1)$ and $M(1/2)\le M(1)$, we have in both cases
$$M\left(\frac12\right)-m\left(\frac12\right)\le \left(1-\frac1C\right)(M(1)-m(1)),$$
and hence
\begin{equation}\label{eq-Holder3}
\omega\left(\frac 12\right)\le \gamma\omega(1),
\end{equation}
for some constant $\gamma\in (0,1)$. We have
\eqref{eq-Holder1} by combining \eqref{eq-Holder2} and \eqref{eq-Holder3}. \end{proof}

Now, we prove two estimates of  $N$. The first  concerns an energy estimate
of $N$ and the second concerns a boundary H\"{o}lder estimate of $N$.

\begin{theorem}\label{moser3}
Let $\Sigma$ be an Alexandrov-Nirenberg
surface in $\mathbb R^3$ of class $C^5$ and $\sigma$ be a connected component of
$\partial\Sigma$.
Then, in the geodesic coordinates as in (\ref{geod1}) and
(\ref{geod2}),
\begin{equation}\label{418q}
\int_0^{\frac12}\int_{0}^{2\pi}\big(tN_t^2+N_s^2)dsdt\le C,
\end{equation}
and
\begin{equation}\label{418}
|N(s,t)-N(s,0)|\le Ct^\alpha\quad\text {for any }t\le 1,
\end{equation}
where $\alpha\in (0,1)$ and $C>0$ are constants depending only on the
quantities in (\ref{maincond}).
\end{theorem}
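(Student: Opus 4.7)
The approach is to apply the general degenerate-elliptic machinery developed in this section to the divergence-form equation \eqref{eq7/N} for $u=1/N^2$, and to convert the resulting conclusions into estimates on $N$ itself using the two-sided bound $1/C\le N\le C$ from Lemma \ref{Remark1.13}.

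Set $\bar a_{ij}=Na^{ij}$, $b_i=\widetilde A_i$, and $f=2A_0/N^2$, so that \eqref{eq7/N} becomes
$$\partial_i(\bar a_{ij}\partial_j u)+b_i\partial_i u=f.$$
I then need to check the structural hypotheses \eqref{eq-Deg1}--\eqref{eq-Deg3}. Using Lemmas \ref{Remark1.13} and \ref{Lemma1.17} together with $L(s,0)=0$ and $L_t(s,0)=\sqrt{K_tB_t}>0$ from Lemma \ref{Lemma1.1}, the entries of $\bar a_{ij}$ are bounded of orders $1$, $\sqrt t$ and $t$ respectively. Since $K(s,0)=0$ and $K_t(s,0)>0$, one has $K\ge c_0 t$ near $t=0$, so $\det\bar a=N^2KB^2\ge ct$ there; a routine eigenvalue analysis then yields the two-sided degenerate ellipticity \eqref{eq-Deg1}. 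For the drift, I use the explicit formula \eqref{eq12/N} to split
$$\widetilde A_2=\underbrace{2K_tB^2+3BB_tK}_{b_{21}}+\underbrace{\tfrac{B_s}{B}MN-\tfrac{5B_t}{B}M^2}_{b_{22}}.$$
The term $b_{21}$ depends only on the data $(B,K)$, hence is bounded with bounded $t$-derivative, and $b_{21}(s,0)=2K_tB^2>0$; by Lemma \ref{Lemma1.17}, $|b_{22}|\le C\sqrt t$. Similarly $|b_1|=|\widetilde A_1|\le C$ since all its ingredients are bounded, and $|f|\le C$ because $A_0$ is a polynomial in quantities already controlled by \eqref{maincond}.

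Given these verifications, the energy bound \eqref{418q} will follow by applying Lemma \ref{garding1} with a $t$-only cutoff $\varphi$ equal to $1$ on $[0,1/2]$ and vanishing for $t\ge 3/4$, extended periodically in $s$; the passage between $u$-derivatives and $N$-derivatives is trivial since $N\in[1/C,C]$. The H\"older bound \eqref{418} will follow by covering the boundary circle with finitely many half-balls $G_R(s_0,0)$ and applying Theorem \ref{moser2b} on each, yielding $|u(s,t)-u(s,0)|\le Ct^\alpha$ for $t\in[0,1/2]$; for $t\in[1/2,1]$ the estimate reduces to the trivial bound $|N(s,t)-N(s,0)|\le 2\sup|N|\le C(1/2)^{-\alpha}t^\alpha$.

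The main difficulty lies not in the general estimates themselves but in the structural verification above: specifically, the sign of $b_{21}$ at $t=0$ and the $O(\sqrt t)$-smallness of $b_{22}$. Both depend on the precise form of $\widetilde A_2$ given by \eqref{eq12/N}, on the boundary identity $M(s,0)=0$ from Lemma \ref{Lemma1.1}, and crucially on the sharp bound $|M|\le C\sqrt t$ from Lemma \ref{Lemma1.17}. Without that $\sqrt t$-gain on $M$, the term $(B_s/B)MN$ would only be $O(1)$, hypothesis \eqref{eq-Deg3} would fail, and the de Giorgi iteration underlying Theorem \ref{moser2b} would not close.
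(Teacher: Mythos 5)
Your proposal is correct and follows essentially the same route as the paper: write \eqref{eq7/N} as a divergence-form equation for $u=1/N^2$, verify \eqref{eq-Deg1}--\eqref{eq-Deg3} via Lemma \ref{Remark1.13}, Lemma \ref{Lemma1.17} and the explicit formula \eqref{eq12/N} (the paper checks ellipticity by completing the square in $LN=M^2+KB^2$ rather than via the determinant, and absorbs $3BB_tK$ into the $O(\sqrt t)$ remainder instead of into $b_{21}$, but these are cosmetic differences), then apply Lemma \ref{garding1} with a $t$-cutoff for \eqref{418q} and Theorem \ref{moser2b} for \eqref{418}, converting back to $N$ by its two-sided bound. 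You also correctly identify the bound $|M|\le C\sqrt t$ as the crucial structural input.
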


\begin{proof} Set
$$u=\frac{1}{N^2}.$$
By \eqref{eq7/N} and with slightly different notations, $u$ satisfies
$$\partial_i(a^{ij}\partial_ju)+b^i\partial_iu=f,$$
where
\begin{align*}&a^{11}=N^2,\quad a^{12}=-MN, \quad a^{22}=LN,\\
&b^2=2B^2K_t+3BB_tK-\frac{5B_t}{B}M^2+\frac{B_s}{B}MN,
\end{align*}
and $b^1$ and $f$ are bounded by  Lemma \ref{Remark1.13}.
We now verify \eqref{eq-Deg1}, \eqref{eq-Deg2} and \eqref{eq-Deg3}.

By Lemma \ref{Remark1.13} and Lemma \ref{Lemma1.17}, we have
$$\frac{1}{C}\le N\le C, \quad |M|\le C\sqrt{t}, \quad L\le Ct.$$
Then, $b^2=2B^2K_t+\tilde b^2t$, where $\tilde b^2$  is a bounded
function, and hence \eqref{eq-Deg2} and \eqref{eq-Deg3} hold.
Moreover,
$$N^2\xi_1^2-2MN\xi_1\xi_2+LN\xi_2^2\le C(\xi_1^2+t\xi_2^2).$$
Next, by $LN=M^2+KB^2$, we have
$$N^2\xi_1^2-2MN\xi_1\xi_2+LN\xi_2^2=(N\xi_1-M\xi_2)^2+KB^2\xi_2^2,$$
and hence, by choosing $c$ small,
\begin{align*}
cN^2(\xi_1^2+t\xi_2^2)&\le 2c(N\xi_1-M\xi_2)^2+2cM^2\xi_2^2+cN^2t\xi_2^2\\
&=2c(N\xi_1-M\xi_2)^2+\big(2cM^2+cN^2t\big)\xi_2^2\\
&\le (N\xi_1-M\xi_2)^2+KB^2\xi_2^2.\end{align*} Therefore,
\eqref{eq-Deg1} is satisfied. By Lemma \ref{garding1}, we obtain
\begin{align*}&
\int_0^{\frac12}\int_{0}^{2\pi}\left(t\left(\partial_t(N^{-2})\right)^2
+\left(\partial_s(N^{-2})\right)^2\right)dsdt\\
&\qquad \le C \int_0^1\int_0^{2\pi}(N^{-4}+f^2)dsdt\le
C.\end{align*} We point out that $u$ is periodic in
$s$. Hence, we can take $\varphi$ as a cutoff function of $t$
near $t=1$. We then have the desired result by the boundedness of
$N$.

By Theorem \ref{moser2b}, we obtain,  for any $0<t\le 1/2$,
$$\left|\frac 1{N(s,t)^2}-\frac 1{N(s,0)^2}\right|\le Ct^\alpha\left(\sup_{t\in (0,1)}\left|\frac
1{N^2}\right|+\sup_{t\in (0,1)}|f|\right)\le C_1 t^\alpha,$$
and hence
$$
|N(s,t)-N(s,0)|=\left|\frac 1{N(s,t)^2}-\frac 1{N(s,0)^2}\right|\frac {N(s,t)^2N(s,0)^2}{N(s,t)+N(s,0)}
\le C_1 t^\alpha.$$
As for $t\in [1/2,1] $, (\ref{418}) follows
immediately from the boundedness of $N$.  We thus have the desired result. \end{proof}

\section{Lipschitz Estimates near Boundary}\label{sec-BoundaryLipschitz}

In this section, we derive the Lipschitz norms of
the second fundamental form near boundary.  Lemma \ref{Lemma1.17} and
Theorem \ref{moser3} are not  enough for
$C^{2,\alpha}$-estimates. We need
a result stronger than Theorem \ref{moser3} for $N(s,t)-N(s,0)$ and
a result stronger  than Lemma
\ref{Lemma1.17} for $M(s,t)$. We will employ blowup techniques in this section.

In the proof of the next result, we will use Theorem \ref{rem1} to conclude the smoothness
of solutions to a limit equation.

\begin{theorem}\label{N2}
Let $\Sigma$ be an Alexandrov-Nirenberg
surface in $\mathbb R^3$ of class $C^5$ and $\sigma$ be a connected component of
$\partial\Sigma$.
Then, in the geodesic coordinates as in (\ref{geod1}) and
(\ref{geod2}),
\begin{equation}\label{418a}
|N(s,t)-N(s,0)|\le Ct\quad\text {for any }t\le 1,
\end{equation}
where $C$ is a positive constant depending only on the
quantities in (\ref{maincond}).
\end{theorem}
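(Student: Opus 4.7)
The plan is a blow-up/compactness argument, using Theorem~\ref{moser3} as a seed and invoking the appendix regularity result (Theorem~\ref{rem1}) to ensure smoothness of the blow-up limit.

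\emph{Setup.} By Lemma~\ref{Lemma1.1}, $h(s):=1/N(s,0)^2=B_t(s,0)/K_t(s,0)$ is a smooth intrinsic function of $s$. Put $w:=1/N^2-h(s)$; then $w(s,0)=0$, the claimed estimate is equivalent to $|w|\le Ct$, and $w$ solves the divergence-form equation \eqref{eq8/N} with this choice of $h$. Using Lemma~\ref{Remark1.13}, Lemma~\ref{Lemma1.17}, and the boundary identities $L(s,0)=0$, $L_t(s,0)=\sqrt{K_tB_t}$, $N_0L_t=K_t$ at $t=0$ from Lemma~\ref{Lemma1.1}, the coefficients of \eqref{eq8/N} fit the framework \eqref{eq-Deg1}--\eqref{eq-Deg3}.

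\emph{Blow-up.} Assume the bound fails. Combined with the H\"older estimate in Theorem~\ref{moser3}, a standard maximum/dyadic selection produces points $(s_n,t_n)$ with $t_n\to 0$ and
$$\mu_n:=\frac{|w(s_n,t_n)|}{t_n}\to\infty,$$
with $\mu_n$ essentially maximal on each anisotropic box $(s_n-R\sqrt{t_n},\,s_n+R\sqrt{t_n})\times(0,Rt_n)$. Motivated by the weight $tu_t^2+u_s^2$ in Lemma~\ref{lemma-Sobolev}, I would rescale by
$$W_n(\bar s,\bar t):=\frac{w(s_n+\sqrt{t_n}\,\bar s,\,t_n\bar t)}{\mu_n t_n},$$
so that $|W_n(0,1)|=1$, $W_n(\bar s,0)=0$, and $|W_n(\bar s,\bar t)|\le C\bar t$ on compact subsets of $\overline{\mathbb{R}^2_+}$.

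\emph{Limit equation.} Substituting the rescaling into \eqref{eq8/N}, the principal coefficients are $N^2$, $NM/\sqrt{t_n}$ and $NL/t_n$, which are uniformly bounded by the a priori estimates above. Freezing at $s_*:=\lim s_n$ I expect $N^2\to\alpha_*:=N_0(s_*)^2$; $NL/t_n\to L_tN_0=K_t(s_*,0)=:\beta_*$; and $NM/\sqrt{t_n}\to 0$, using $M=M_t(s,0)t+O(t^2)$ coming from $\Sigma\in C^5$. The divergence correction $\widetilde A_2$ (see \eqref{eq12/N}) contributes a further $2\beta_*\partial_{\bar t}$, and the inhomogeneity is suppressed by $1/\mu_n\to 0$. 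The energy inequality of Lemma~\ref{garding1} and the boundary H\"older estimate of Theorem~\ref{moser2b}, applied to $W_n$, give uniform $C^\alpha_{\rm loc}$ bounds, so a subsequence converges locally uniformly to a limit $W_\infty$ on $\overline{\mathbb{R}^2_+}$ satisfying
$$\alpha_*\,W_{\infty,\bar s\bar s}+\beta_*\,\partial_{\bar t}\bigl(\bar t\,W_{\infty,\bar t}\bigr)+2\beta_*\,W_{\infty,\bar t}=0,\qquad W_\infty|_{\bar t=0}=0,$$
with $|W_\infty|\le C\bar t$ and $|W_\infty(0,1)|=1$.

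\emph{Rigidity of the limit.} By Theorem~\ref{rem1}, $W_\infty$ is smooth up to $\{\bar t=0\}$. The substitution $\tau:=2\sqrt{\beta_*\bar t/\alpha_*}$ recasts the equation as the three-dimensional Laplace equation acting on an axially symmetric function with $\tau$ the radial variable, while $W_\infty|_{\bar t=0}=0$ becomes vanishing on the axis. The linear growth in $\bar t$ (hence at most quadratic in $\tau$) together with vanishing on the axis forces $W_\infty\equiv 0$ by a Liouville argument for axially symmetric harmonic functions, contradicting $|W_\infty(0,1)|=1$.

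The main obstacle is the limit-equation step: every non-principal term in \eqref{eq8/N}, most notably the cross term $-2NM\,w_{st}$, must become negligible under the anisotropic blow-up. The bound $|M|\le C\sqrt{t}$ of Lemma~\ref{Lemma1.17} is only borderline, so the argument must exploit the finer information that $M$ vanishes to first order in $t$. As anticipated in the paragraph following Lemma~\ref{Lemma1.17}, the cleanest route is to run the analogous blow-up for $M$ in parallel: the same limit rigidity then yields the companion estimate $|M|\le Ct$, and the two improvements feed each other to close the scheme.
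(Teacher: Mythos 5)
Your overall architecture coincides with the paper's: argue by contradiction, perform the anisotropic blow-up $x=(s-s_k)/\sqrt{t_k}$, $y=t/t_k$ on $w=1/N^2-1/N^2(s,0)$ normalized by $\theta_k t_k$, pass to the limit equation $yw_{yy}+\tfrac{N^2(0)}{K_t(0)}w_{xx}+3w_y=0$ (your $\alpha_*,\beta_*$ computation reproduces exactly this), and kill the limit using the boundary regularity of Theorem \ref{rem1}. But the step you flag as ``the main obstacle'' is precisely where your proposal has a genuine gap, and your proposed fix does not close. You suggest running the blow-up for $M$ in parallel so that the estimates $|N-N(s,0)|\le Ct$ and $|M|\le Ct$ ``feed each other.'' This is circular as stated: in the paper the $M$-estimate (Theorem \ref{M2}) is proved \emph{after} and \emph{using} Theorem \ref{N2} --- both to make the coefficients $\widehat A_i$, $f$ of the $M$-equation bounded (the term $\widehat A_{12}\tfrac{M}{L}(N-N(s,0))$ in Remark \ref{rmk-Coeff-M} is controlled only once $|N-N(s,0)|\le Ct$) and, crucially, to get $|\bar w_k|\le C\theta_k^{-1}y$, which is what forces $\partial_y\bar M_k\to 0$ and contradicts $|\bar M_k(0,1)|\ge 1/2$. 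A genuinely coupled scheme with a common normalization would leave a nontrivial coupled limit system whose rigidity you have not addressed.

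The idea you are missing is that the $N$-blow-up does not need $|M|\le Ct$ at all: one only needs the \emph{integral} smallness $\int\psi^2 M_k^2/t_k\to 0$, and this follows from the Codazzi equation $\partial_tM=\partial_sN-\tfrac{B_t}{B}M$. Rescaled, this reads $\partial_yM_k+\tfrac12\sqrt{t_k}N_k^3\partial_xw_k=t_kh_k$, so $\int\psi^2(\partial_yM_k)^2\le Ct_k\int\psi^2(\partial_xw_k)^2+Ct_k^2\int\psi^2h_k^2$; since $M_k(x,0)=0$ one integrates in $y$, and the right-hand side tends to $0$ because the Caccioppoli inequality of Lemma \ref{garding1} combined with the H\"older bound $|w_k|\le Ct^\alpha$ of Theorem \ref{moser3} gives $\int\psi^2\bigl(y(\partial_yw_k)^2+(\partial_xw_k)^2\bigr)\le C_\psi t_k^{2\alpha}$. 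This is exactly how the paper disposes of the cross term $\widetilde a_k^{12}=-M_kN_k/\sqrt{t_k}$ and of the $\widetilde a_k^{22}$-contribution $M_k^2/t_k$, and it is the decisive technical input absent from your proposal. Two minor further points: the contradiction must be set up over a \emph{sequence of surfaces} with uniformly controlled data (for a single fixed $C^5$ surface the estimate is immediate from $N\in C^3$ up to the boundary, so a single-surface blow-up proves nothing about the uniform constant); and your raising-dimension substitution for $tW_{tt}+3W_t$ produces the radial Laplacian in $\mathbb R^6$ (hence an axially symmetric harmonic function in $\mathbb R^7$), not a three-dimensional one --- the Liouville argument survives, but the paper instead concludes via analyticity of the limit and the vanishing of all $\partial_y^k w$ on $\{y=0\}$.
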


%We shall use blow-up analysis to prove the present theorem.

\begin{proof} Set
$$u=\frac{1}{N^2}-\frac{1}{N^2(s,0)}.$$
By \eqref{eq8/N}, \eqref{eq9/N} and \eqref{eq10/Na},  and with slightly
different notations, $u$ satisfies
$$\partial_i(a^{ij}\partial_ju)+{b}^i\partial_iu=f,$$
where
\begin{align*}&a^{11}=N^2,\quad a^{12}=-MN, \quad a^{22}=LN,\\
&{b}^2=2B^2K_t+3BB_tK-\frac{5B_t}{B}M^2+\frac{B_s}{B}MN
+\frac12MN^3\partial_s\left(\frac{1}{N^2(s,0)}\right),
\end{align*} and $b^1$ and $f$ are bounded.

We now prove \eqref{418a} by contradiction. If it was
false, then there would exist a sequence of Alexandrov-Nirenberg surfaces
$\Sigma_k$, with their induced metrics $g_k$ in $\bar D$,
such that, in the geodesic
coordinates as in (\ref{geod1}) and (\ref{geod2}),
$$|g_k|_{C^4}, \left(\inf\partial_tK_k(s,0)\right)^{-1}\text { and }
\left(\inf\partial_tB_k(s,0)\right)^{-1}\text{ are uniformly bounded},$$
and
\begin{equation}
\theta_k\equiv\sup\left\{\frac{|N_k(s,t)-N_{k}(s,0)|}{t}\right\}
\rightarrow \infty\quad \text{as } k\rightarrow \infty.
\end{equation}
Set $\Omega_1=\{(s,t):\, s\in [0,2\pi],\, t\in (0,1)\}$.
We also assume $g_k\to g$ in $C^3(\bar \Omega_1)$ for some smooth
metrics $g$ on $\bar \Omega_1$.
% and
%the convergence of the corresponding $B_k$ and $K_k$.
Let
$(s_k,t_k)$ be a point  such that $t_k>0$ and
$$\frac{|N_k(s_k,t_k)-N_{k}(s_k,0)|}{t_k}\ge
\frac12\theta_k.$$ Without loss of generality, we assume
$(s_k,t_k)\rightarrow (0,0)$. By Lemma \ref{Remark1.13}
and Lemma \ref{Lemma1.17}, we have, for any $k\ge 1$,
$$\frac1C\le N_k\le C, \quad |M_k|\le C\sqrt t,$$
and hence
$$t_k\theta_k\le C,$$
where $C$ is a positive constant under control. Furthermore, Theorem \ref{moser3}
implies
\begin{equation}
\label{325}
|N_k(s,t)-N_k(s,0)|\le Ct^\alpha,\end{equation}
where $\alpha\in (0,1)$ is a constant under control.

Set
$$u_k(s,t)=\frac{1}{N_k^2(s,t)}-\frac{1}{N_k^2(s,0)}.$$
Then,
\begin{equation}\label{334}\partial_i(a_k^{ij}\partial_ju_k)
+b_k^i\partial_iu_k=f_k,\end{equation}
where $a_{k}^{ij}, b_k^i$ and $f_k$ are uniformly bounded, independent of $k$.
Consider the transform
\begin{equation}\label{4231}
x=\frac{s-s_k}{\sqrt{t_k}},\quad y=\frac{t}{t_k},\end{equation}
and set
$$w_k(x,y)=u_k(s,t),\quad  \bar{w}_k(x,y)=\frac{1}{\theta_kt_k}w_k(x,y).$$
Then,
\begin{equation}\label{3191}
|\bar{w}_k|\leq Cy,\quad |\bar{w}_k(0,1)|\ge \frac
1C,\end{equation} for some constant $C$ under control.
In  the new coordinates $(x,y)$, $w_k$ and $\bar w_k$
satisfy, with $(\partial_1, \partial_2)=(\partial_x, \partial_y)$,
\begin{equation}\label{304a}\partial_i(\widetilde a_k^{ij}\partial_j w_k)+
\widetilde b_k^i\partial_i w_k=t_kf_k,\end{equation}
and
\begin{equation}\label{304}\partial_i(\widetilde a_k^{ij}\partial_j\bar w_k)+
\widetilde b_k^i\partial_i\bar w_k=\theta_k^{-1}f_k,\end{equation}
where
$$\widetilde a^{11}_k=a^{11}_k, \quad
\widetilde a^{12}_k=\frac{a^{12}_k}{\sqrt{t_k}}, \quad
\widetilde a^{22}_k=\frac{a^{22}_k}{t_k},$$
and
$$\widetilde b^1_k=\sqrt{t_k}b^1_k, \quad \widetilde b^2_k=b^2_k.$$
In particular,
$$\widetilde a^{11}_k=N_k^2, \quad
\widetilde a^{12}_k=-\frac{1}{\sqrt{t_k}}M_kN_k, \quad
\widetilde a^{22}_k=\frac{1}{t_k}L_kN_k.$$
Hence,
$$C^{-1}(\xi_1^2+y\xi_2^2)\le \widetilde a^{ij}_k\xi_i\xi_j\le C(\xi_1^2+y\xi_2^2)
\quad\text{for any }\xi\in\mathbb R^2,$$
for some constant $C$ under control.
Therefore, the principle part of (\ref{304}) is an elliptic
equation of divergence form with bounded measurable coefficients
in the region $\{y>\delta\}$. Then, for any  $\delta, R>0$,  there is a positive
$\beta\in (0,1)$, depending only on $\delta$ and $R$, such that
\begin{equation}\label{324z} |\bar w_k|_{C^\beta(B_R\cap\{y>\delta\})}\le C_{R\delta}.\end{equation}
%This follows from the H\"{o}lder estimate due to De Giorgi-Moser.

Next, let $\psi$ be a cut-off
function in $\mathbb R^2$. Then we claim
\begin{align}
\label{324}\int_{\mathbb R^2_+}
\psi^2\left(y(\partial_y\bar{w}_k)^2+(\partial_x\bar{w}_k)^2\right)&\le
C_\psi,\\
\label{324a}\int_{\mathbb R^2_+}
\psi^2\left(y(\partial_y{w}_k)^2+(\partial_x{w}_k)^2\right)&\le
C_\psi t_k^{2\alpha},
\end{align}
where $\alpha$ is the constant as in Theorem \ref{moser3} and
$C_{\psi}$ is a positive constant depending only on $\psi$ and other
quantities under control.
To see this, we first note by Lemma \ref{garding1}
\begin{align}\label{324q}\begin{split}
&\int
\psi^2\left(y(\partial_y\bar{w}_k)^2+(\partial_x\bar{w}_k)^2\right)\\
&\qquad\leq C\int
(\psi^2+y\psi_y^2+\psi_x^2+|\psi_y|)\bar{w}_k^2+C\theta_k^{-2}\int
\psi^2|f_k|^2.
\end{split}\end{align}
This implies \eqref{324} easily by \eqref{3191}.
Similarly,
$w_k$ satisfies
\begin{align*}
&\int
\psi^2\left(y(\partial_y{w}_k)^2+(\partial_x{w}_k)^2\right)\\
&\qquad\leq C\int
(\psi^2+y\psi_y^2+\psi_x^2+|\psi_y|){w}_k^2+Ct_k^{2}\int
\psi^2|f_k|^2.
\end{align*}
By (\ref{325}), we have
$$|w_k(x,y)|\le C|N_k(s,t)-N_k(s,0)|\le Ct^\alpha=Ct_k^\alpha y^\alpha,$$
and hence \eqref{324a} follows.

Next, we claim
\begin{equation}\label{324bb}\int \psi^2\frac { M^2_k}{t_k}\rightarrow 0\quad\text {as
}k\rightarrow \infty.\end{equation}
To prove this, we note that  (\ref{GC2}) implies
$$\partial_tM_k+\frac 12N_k^3\partial_s\left(\frac 1{N^2_k}-\frac 1{N^2_k(0,s)}\right)
=-\frac{B_t}{B}M_k-\frac 12N_k^3\partial_s\left(\frac
1{N^2_k(0,s)}\right)\equiv h_k,$$ and hence
\begin{equation}\label{332}
\partial_y M_k+\frac 12\sqrt {t_k}N_k^3\partial_x w_k =t_k{h}_k.
\end{equation}
Then,
\begin{equation*}\label{316} \int
\psi^2(\partial_yM_k)^2\leq Ct_k\int
\psi^2(\partial_xw_k)^2+Ct_k^{2}\int \psi^2h_k^2.
\end{equation*}
In view of the fact that $M_k(x,0)=0$, we have, for arbitrary
$ r,T>0$ and any $\psi\in C^1_0(\mathbb R^2)$ with $\psi=1$ on $[-r, r]\times [0, T]$,
\begin{align*}\label{315}
\int_{0}^T\int_{-r}^{r} \frac {| M_k|^2}{t_k}dxdy&\le
T^2\int_{0}^T\int_{-r}^r \frac {|\partial_y
M_k|^2}{t_k}dxdy\\
&\leq C\int \psi^2(\partial_xw_k)^2+Ct_k\int
\psi^2h_k^2\rightarrow 0 \quad\text{as } k\rightarrow \infty,
\end{align*}
where we used \eqref{324a} in the final step. This finishes the proof of
\eqref{324bb}. In terms of  coefficients, we have
\begin{equation}\label{324b}\int \psi^2(\widetilde a^{12}_k)^2\rightarrow 0\quad\text {as
}k\rightarrow \infty.\end{equation}

In view of \eqref{324z}, we can find a subsequence of $\{\bar w_k\}$, still
denoted by $\bar w_k$, such that,
$$\bar w_k\rightarrow w\quad\text{locally uniformly in }\mathbb R^2_+,$$
for some $w\in C(\mathbb R^2_+)$.
By (\ref{3191}), we have
$$|w(x,y)|\le Cy\quad\text{in }\mathbb R^2_+,$$
and
\begin{equation}\label{3191p}|w(0,1)|\ge\frac 1{C}.\end{equation}
The former estimate implies $w\in C(\bar
{\mathbb R}^2_+)$ and
$$w(x,0)=0.$$
In the following, we prove that $w$ satisfies
\begin{equation}\label{weak1}
yw_{yy}+\frac {N^2(0)}{K_t(0)}w_{xx}+3w_y=0 \quad\text{in }\mathbb R^2_+.
\end{equation}
Indeed, for any cut-off function $\psi\in C_c^{\infty}(\mathbb R^2_+)$,
multiplying both sides of (\ref{304}) by $\psi$ and integrating by
parts,  we have
\begin{equation}\label{327}
\int\big(\widetilde a^{ij}_k\partial_i\bar w_k\partial_j\psi
-\widetilde b^i_k\partial_i\bar w_k\psi\big)
=-\int\psi\theta_k^{-1}f_k.\end{equation}
Since $f_k$ and $b^1_k$ are bounded, we have
$$\int\psi\theta_k^{-1}f_k\to 0,$$
and, by \eqref{324},
$$\int \psi\widetilde b^1_k\partial_x\bar w_k=\sqrt{t_k}\int \psi b^1_k\partial_x\bar w_k
\rightarrow 0,$$
as $k\rightarrow \infty.$
Next, since
$$\widetilde b^2_k=b^2_k=2\partial_tK_k(s,0)+O(\sqrt{t})=2\partial_tK_k(s,0)+\sqrt{t_k}O(\sqrt{y}),$$
then
$$\left|\int\psi\widetilde b^2_k\partial_y\bar w_k-2\int\partial_tK_k(s,0)\psi\partial_y\bar w_k\right|
\le C\sqrt{t_k}\int\psi\sqrt{y}|\partial_y\bar w_k|,$$
and hence, by \eqref{324},
$$\int \psi \widetilde b^2_k\partial_y\bar w_k\rightarrow 2\int
\psi K_t(0)w_y,
$$
as $k\rightarrow \infty$.
By \eqref{324} and \eqref{324b}, we have
$$
\int |\psi_y\widetilde a^{12}_k\partial_x\bar w_k|\le
\left(\int |\psi_y|(\widetilde a^{12}_k)^2\right)^{\frac12}\left(\int
|\psi_y| |
\partial_x\bar w_k|^2\right)^{\frac12}\rightarrow 0,$$
as $k\rightarrow \infty.$
Next, by \eqref{325} and $\widetilde a^{11}_k=N_k^2$, we have
\begin{equation*}
\int |\psi_x(\widetilde a^{11}_k-\widetilde a^{11}_k(0))\partial_x\bar w_k|
\le Ct_k^\alpha\left(\int |\psi_x|\right)^{\frac12}\left(\int |\psi_x|
|\partial_x\bar w_k|^2\right)^{\frac12}
\rightarrow 0, \end{equation*}
or
$$\int \psi_x\widetilde a^{11}_k\partial_x\bar w_k\to \int N^2(0)\psi_xw_x,$$
as $k\rightarrow \infty$. We note that $N_k(s,0)$ is intrinsically determined by
Lemma \ref{Lemma1.1}.
For the $\widetilde a^{22}_k$ term, we note
$$
\int\psi_y\widetilde a^{22}_k\partial_y\bar w_k=\int\psi_y\frac {N_kL_k}{t_k}\partial_y\bar w_k
=\int\psi_y\frac{M^2_k}{t_k}\partial_y\bar w_k+\int\psi_y\frac{K_kB_k^2}{t_k} \partial_y\bar w_k.$$
By writing
$$\frac{M_k^2}{t_k}=\frac{M_k}{\sqrt{t_k}}\cdot \frac{M_k}{\sqrt{t_k}}\le C\frac{\sqrt t}{\sqrt{t_k}}\cdot
\frac{|M_k|}{\sqrt{t_k}}\le C\sqrt{y}\frac{|M_k|}{\sqrt{t_k}},$$
we have, by \eqref{324} and \eqref{324bb}
$$\left|\int\psi_y\frac{M^2_k}{t_k}\partial_y\bar w_k\right|\le C\left(\int|\psi_y|\frac{M_k^2}{t_k}\right)^{\frac12}
\left(\int |\psi_y|y|\partial_y\bar w_k|^2\right)^{\frac12}\to 0,$$
as $k\to\infty$. Moreover,
\begin{align*}K_k(s,t)&=K_k(s_k+\sqrt{t_k}x, t_ky)-K(s_k+\sqrt{t_k}x, 0)\\
&=
t_ky\int_0^1\partial_tK_k(s_k+\sqrt{t_k}x, t_ky\tau)d\tau.\end{align*}
Therefore,
$$\frac{K_k}{t_k}\to yK_t(0)\quad\text{locally uniformly in }\mathbb R^2_+,$$
and hence
$$
\int\psi_y\widetilde a^{22}_k\partial_y\bar w_k\to\int\psi_yK_t(0)yw_y,$$
as $k\to\infty$.
Finally,  by passing to the limit in
(\ref{327}), we get
$$\int \big(
K_t(0)y\psi_yw_y+N^2(0)\psi_xw_x- 2K_t(0)\psi w_y\big)=0.$$ This
is simply the equation (\ref{weak1}) in the weak sense after
multiplying both sides by $1/K_t(0)$. Therefore, applying Theorem
\ref{rem1} to (\ref{weak1}), we  conclude that its solution $w\in
C^{\infty}(\{y\ge 0\})$. Moreover, $w$ is analytic in a
neighborhood of $0$ as shown in \cite{L2}; namely, $w$ can be
expanded in terms of  a Taylor series in $B_r(0)\cap \{y\ge 0\}$
for some positive constant $r$. Now by (\ref{weak1}) and $w=0$ on
$y=0$,  we get $\partial_y^kw=0$ on $y=0$ for all $k=1,2,\cdots$.
Therefore, $w\equiv 0$ in $\mathbb R^2_+$, which contradicts
\eqref{3191p}. This ends the proof for the present theorem.
\end{proof}

We now improve the estimate for $M$ in Lemma \ref{Lemma1.17}.

\begin{theorem}\label{M2}
Let $\Sigma$ be an Alexandrov-Nirenberg
surface in $\mathbb R^3$ of class $C^5$ and $\sigma$ be a connected component of
$\partial\Sigma$.
Then, in the geodesic coordinates as
in (\ref{geod1}) and (\ref{geod2}),
\begin{equation}\label{418b}
|M(s,t)|\le Ct\quad\text {for any }t\le 1,
\end{equation}
where $C$ is a positive constant depending only on the
quantities in (\ref{maincond}).
\end{theorem}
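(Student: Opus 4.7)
The plan is to imitate the contradiction-plus-blowup strategy used for Theorem~\ref{N2}, now applied to the quasilinear equation~\eqref{eq2M}. I would suppose \eqref{418b} fails, so that there are Alexandrov--Nirenberg surfaces $\Sigma_k$ with uniformly controlled intrinsic data for which
$$\theta_k := \sup\{|M_k(s,t)|/t : (s,t)\in\bar\Omega_1,\ t>0\} \to \infty.$$
Pick $(s_k,t_k)$ with $t_k>0$ and $|M_k(s_k,t_k)|/t_k\ge\theta_k/2$. Lemma~\ref{Lemma1.17} gives $|M_k(s_k,t_k)|\le C\sqrt{t_k}$, hence $\mu_k := \theta_k\sqrt{t_k}\le 2C$ and $t_k\to 0$. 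Passing to a subsequence, assume $(s_k,t_k)\to(0,0)$, $g_k\to g$ in $C^3(\bar\Omega_1)$, and $\mu_k\to\mu\in[0,2C]$.

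Next, rescale by $x=(s-s_k)/\sqrt{t_k}$, $y=t/t_k$, and set $V_k(x,y)=M_k(s,t)/(\theta_k t_k)$. The definition of $\theta_k$ immediately yields the pointwise bound $|V_k(x,y)|\le y$, together with $V_k(x,0)=0$ and $|V_k(0,1)|\ge 1/2$. Plugging $V_k$ into the divergence form \eqref{eq2M} and dividing through by $\theta_k$, the rescaled equation has principal part
$$\partial_x(N_k V_{k,x}) + \partial_y\big((L_k/t_k)V_{k,y}\big) - \mu_k\big[\partial_x(V_k V_{k,y})+\partial_y(V_k V_{k,x})\big],$$
with $L_k/t_k\to\sqrt{K_t(0)B_t(0)}\,y$ and $N_k\to N(0)$. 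The drift $\widecheck A_{k,2}V_{k,y}$ converges to a nonzero limit, while $\sqrt{t_k}\widecheck A_{k,1}V_{k,x}$ and $\widehat A_{k,0}/\theta_k$ vanish. The quasilinear remainder $-(2M_k/(N_kL_k))a_k^{ij}\partial_iM_k\,\partial_jM_k$, after division by $\theta_k$, reorganizes into terms proportional to powers of $\mu_k$ multiplying factors of the form $(V_k/y)\cdot(\text{quadratic in }V_{k,x},V_{k,y})$; since $|V_k/y|\le 1$ and $\mu_k$ is bounded, the full remainder is uniformly bounded. An adaptation of Lemma~\ref{garding1}, with $|V_k|\le y$ absorbing the nonlinear contributions, then yields the uniform energy estimate $\int\psi^2(yV_{k,y}^2+V_{k,x}^2)\le C_\psi$.

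Combined with interior H\"older estimates on $\{y>\delta\}$, extract a locally uniform limit $V_k\to V\in C(\bar{\mathbb R}^2_+)$, with $V(x,0)=0$, $|V|\le y$, and $|V(0,1)|\ge 1/2$. Passing to the limit in the weak formulation, $V$ satisfies on $\mathbb R^2_+$ an equation whose principal part is the same characteristically degenerate operator $N(0)\partial_x^2+\sqrt{K_t(0)B_t(0)}\,(y\partial_y^2+\partial_y)$ appearing in Theorem~\ref{N2}, perturbed by a lower-order linear drift $\widecheck A_2(0)V_y$ and a quasilinear remainder $R(\mu,V,V_x,V_y)$ that vanishes when $\mu=0$ and is of strictly lower order than the principal part. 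The main obstacle will be the case $\mu>0$, where the limit equation is genuinely quasilinear and Theorem~\ref{rem1} does not apply verbatim. I would handle this by writing $V=y\widetilde V$ with $|\widetilde V|\le 1$, absorbing the lower-order nonlinear pieces into bounded coefficient perturbations of the linear principal part, and then invoking Theorem~\ref{rem1} exactly as in the proof of Theorem~\ref{N2} to conclude that $V\in C^\infty(\{y\ge 0\})$ and is analytic near a boundary point. Finally, differentiating the equation in $y$ and using $V(x,0)=0$ inductively forces $\partial_y^k V(x,0)=0$ for every $k\ge 1$; by analyticity $V\equiv 0$ on $\mathbb R^2_+$, contradicting $|V(0,1)|\ge 1/2$ and establishing \eqref{418b}.
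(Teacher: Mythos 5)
Your overall blowup-and-contradiction framework and rescaling are the same as the paper's, but the core of your argument diverges from the paper's at exactly the point where your version develops genuine gaps. You propose to pass to a limiting PDE for the rescaled quantity $V_k=M_k/(\theta_kt_k)$ using the second-order quasilinear equation \eqref{eq2M}, and then to rule out the limit by the regularity/analyticity argument of Theorem \ref{N2}. Three steps of this do not go through as stated. First, the uniform energy estimate: after rescaling, the quasilinear term contributes $\mu_k^2V_kV_{k,y}^2/N_k$ (and companions), which is of the \emph{same} order as the degenerate principal energy $yV_{k,y}^2$ since $|V_k|\le y$; absorbing it in the Caccioppoli argument of Lemma \ref{garding1} requires a smallness of $\mu_k^2\sup_{\operatorname{supp}\psi}y$ that you have not established ($\mu_k$ is only bounded, not small). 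Second, even granting weak $H^1$-type compactness, you cannot pass to the limit in a term quadratic in $\nabla V_k$ without \emph{strong} $L^2_{loc}$ convergence of the gradients, which the energy bound does not give. Third, Theorem \ref{rem1} is a linear result with the structural hypothesis $b_2(s,0)>2$; rewriting the quadratic term as a drift with coefficient proportional to $V_y$ is circular (the coefficient is only bounded once you already have the gradient bound you are trying to prove), and you never verify the $b_2>2$ condition for the limit operator coming from \eqref{eq2M}.

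The paper avoids all of this by never analyzing a limit equation for $M$ at all. The second-order equation \eqref{eq2M} is used only to get an interior De Giorgi--Nash--Moser H\"{o}lder bound on $\{y>\delta\}$ (which tolerates quadratic gradient terms), yielding $|\bar M_k|\ge 1/4$ on a fixed ball around $(0,1)$. The contradiction then comes from the \emph{first-order} Codazzi equation \eqref{GC2}, rescaled as $\partial_y\bar M_k+\tfrac12\sqrt{t_k}N_k^3\partial_x\bar w_k=\theta_k^{-1}h_k$ with $\bar w_k=(N_k^{-2}-N_k^{-2}(s,0))/(\theta_kt_k)$: the already-proven Theorem \ref{N2} gives $|\bar w_k|\le C\theta_k^{-1}y$, hence the energy of $\bar w_k$ is $O(\theta_k^{-2})$, hence $\partial_y\bar M_k\to0$ in $L^2_{loc}$, and since $\bar M_k(x,0)=0$ a Poincar\'{e} inequality forces $\bar M_k\to0$ in $L^2_{loc}$ --- contradicting the lower bound on the ball. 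This coupling of the $M$-blowup to the improved $N$-estimate through the first-order Codazzi relation is the key idea your proposal is missing, and it is what lets the proof bypass the quasilinear limit equation entirely. I would recommend restructuring your argument along these lines rather than trying to repair the limit-equation route.
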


\begin{proof} By \eqref{eq2M} and (\ref{mequation1p}),
and with slightly different notations, $M$ satisfies
$$\partial_i(a^{ij}\partial_jM)+b^i\partial_iM+ca^{ij}\partial_iM\partial_jM=f,$$
where
\begin{equation*}
a^{11}=N, \quad a^{12}=-M, \quad a^{22}=L, \quad c=-\frac{2M}{NL},
\end{equation*}and $b_i$ and $f$ are bounded by Remark \ref{rmk-Coeff-M} and Theorem \ref{N2}.

We now prove \eqref{418b} by contradiction. If it
was false, then there would exist a sequence of Alexandrov-Nirenberg surfaces
$\Sigma_k$, with their  induced metrics $g_k$ in $\bar D$,
such that, in the geodesic coordinates as
in (\ref{geod1}) and (\ref{geod2}),
$$|g_k|_{C^4}, \left(\inf\partial_tK_k(s,0)\right)^{-1}
\text { and }\left(\inf\partial_tB_k(s,0)\right)^{-1}\text{ are uniformly bounded},$$
and
\begin{equation}
\theta_k\equiv\sup\left\{\frac{|M_k(s,t)|}{t}\right\}\rightarrow \infty\quad \text{as } k\rightarrow \infty.
\end{equation}
Set $\Omega_1=\{(s,t):\, s\in [0,2\pi],\, t\in (0,1)\}$.
We also assume $g_k\to g$ in $C^3(\bar \Omega_1)$ for some smooth
metrics $g$ on $\bar \Omega_1$.
% and the convergence of the corresponding $B_k$ and $K_k$.
Let $(s_k,t_k)$ be a point such that $t_k>0$
$$\frac{|M_k(s_k,t_k)|}{t_k}\ge \frac12\theta_k.$$
Without loss of generality, we assume
$(s_k,t_k)\rightarrow (0,0)$.
By Corollary \ref{Remark1.13}
and Lemma \ref{Lemma1.17}, we have, for any $k\ge 1$,
$$\frac1C\le N_k\le C, \quad |M_k|\le C\sqrt t,$$
and hence
\begin{equation}\label{331}\sqrt{t_k}\theta_k\le C,\end{equation}
where $C$ is a positive constant under control.

Consider the transform
\begin{equation}\label{4231a}
x=\frac{s-s_k}{\sqrt{t_k}},\quad y=\frac{t}{t_k},\end{equation}
and set
$$\bar M_k(x,y)=\frac{1}{t_k\theta_k}M_k(s,t),$$
and
$$w_k(x,y)=\frac{1}{N^2_k(s,t)}-\frac{1}{N^2_k(s,0)},\quad  \bar{w}_k(x,y)=\frac{w_k}{\theta_kt_k}.$$
Then,
\begin{equation}\label{3191a}
|\bar{M}_k(0,1)|\ge \frac
12.\end{equation}
Moreover, by Theorem \ref{N2},
\begin{equation}\label{3191b}
|\bar{w}_k|\leq  C\theta_k^{-1}y.\end{equation}

In the original coordinates $(s,t)$,  $M_k$ satisfies
$$\partial_i(a_k^{ij}\partial_jM_k)+b_k^i\partial_iM_k
+c_ka_k^{ij}\partial_iM_k\partial_jM_k=f_k,$$
where
$$
a^{11}_k=N_k, \quad a^{12}_k=-M_k, \quad a^{22}_k=L_k,
\quad c_k=-\frac{2M_k}{N_kL_k},$$
and $b^i$ and $f_k$ are uniformly bounded, independent of $k$.
Then in  the new coordinates $(x,y)$, $\bar M_k$ satisfies
\begin{equation}\label{mequation11}
\partial_i(\widetilde{a}_k^{ij}\partial_j\bar M_k)+\widetilde{b}_k^i\partial_i\bar M_k
+\widetilde{c}_k\widetilde{a}_k^{ij}\partial_i\bar M_k\partial_j\bar M_k=\theta_k^{-1}f_k,
\end{equation}
where
\begin{align*}
&\widetilde a_k^{11}=a^{11}_k, \quad \widetilde a_k^{12}=\frac{a_k^{12}}{\sqrt{t_k}},
\quad \widetilde a^{22}_k=\frac{a_k^{22}}{t_k},
\quad \widetilde c_k=t_k\theta_k c_k,\\
&\widetilde b^1_k=\sqrt{t_k}b^1_k, \quad \widetilde b^2_k=b^2_k.\end{align*}
As in the proof of Theorem \ref{N2}, we have
$$C^{-1}(\xi_1^2+y\xi_2^2)\le \widetilde a^{ij}_k\xi_i\xi_j\le C(\xi_1^2+y\xi_2^2)
\quad\text{for any }\xi\in\mathbb R^2,$$
for some constant $C$ under control. Moreover, by \eqref{331},
$$|\tilde c_k|=\left|\frac{2t_k\theta_kM_k}{N_kL_k}\right|
\le \frac{Ct_k\theta_k|M_k|}{K_k}\le
\frac{Ct_k\theta_k}{\sqrt t}= \frac{C\sqrt{t_k}\theta_k}{\sqrt y}\le \frac{C}{\sqrt y}.$$
As
$y>\delta$ where $\delta>0$,  the principle part of
(\ref{mequation11}) is an elliptic equation of divergence
form with bounded measurable coefficients and the nonlinear terms of
first derivatives are quadratic. Then, for any  $\delta, R>0$,  there is a positive
$\beta\in (0,1)$, depending only on $\delta$ and $R$, such that
\begin{equation}\label{324zz} |\bar M_k|_{C^\beta(B_R\cap\{y>\delta\})}\le C_{R\delta}.\end{equation}
This follows from the H\"{o}lder estimate due to de Giorgi and Moser.
We point out that the H\"{o}lder estimate still holds even with the presence
of the quadratic nonlinear terms in first derivatives.
Then, by \eqref{3191a}, there exists an
$r_0>0$ such that
\begin{equation}\label{3191aa}|\bar M_k|\ge \frac14\quad\text{in }B_{r_0}((0,1)).\end{equation}

Let $\psi$ be a cutoff function in $\mathbb R^2$. By (\ref{324q}) and \eqref{3191b}, we obtain
\begin{equation}\begin{split}\label{305}
&\int
\psi^2\left(y(\partial_y\bar{w}_k)^2+(\partial_x\bar{w}_k)^2\right)\\
&\leq C\int
(\psi^2+y\psi_y^2+\psi_x^2+|\psi_y|)\bar{w}_k^2+C\theta_k^{-2}\int
\psi^2|f_k|^2\leq C_\psi\frac 1{\theta^2_k}.
\end{split}
\end{equation}
By (\ref{332}), we have
\begin{equation*}
\partial_y \bar M_k+\frac 12\sqrt {t_k} N_k^3\partial_x \bar w_k =\frac 1{\theta_k} h_k,
\end{equation*} and hence, by (\ref{305}),
$$\int \psi^2(\partial_y\bar M_k)^2\leq Ct_k\int
\psi^2(\partial_x\bar w_k)^2+C\frac 1{\theta^2_k}\int \psi^2h_k^2
\le C_\psi\frac 1{\theta_k^2}\rightarrow 0,$$
as $k\rightarrow \infty$. In view of the fact that $\bar M_k(x,0)=0$, we
have, for arbitrary $r,T>0$
$$
\int_{0}^T\int_{-r}^{r} |\bar M_k|^2dxdy\\
\le T^2\int_{0}^T\int_{-r}^r |\partial_y
\bar M_k|^2dxdy\rightarrow 0,$$
as $k\rightarrow \infty$.
This contradicts \eqref{3191aa} and hence completes the proof of the
present theorem.
\end{proof}

\section{Higher Order Estimates near Boundary}\label{sec-HigherOrderEstimates}

In this section, we derive estimates of higher order derivatives of the
second fundamental forms and prove Theorem \ref{them-main}.
%In view of the interior estimates in Theorem \ref{inregularity},
%for Theorem \ref{mainTh} it suffices to prove the boundary
%estimates.
Interior estimates are already proved in Theorem \ref{inregularity}.
Next, we estimate the higher order derivatives of $L,M,N$
in the geodesic coordinates as in (\ref{geod1}) and (\ref{geod2})
near the boundary. We need Lemma \ref{regular1} and Lemma
\ref{regular2} in the proof of the following result.

\begin{theorem}\label{mainregu}
Let $k\ge 2$ be an integer,
$\Sigma$ be an Alexandrov-Nirenberg surface in $\mathbb R^3$ of class $C^{k+6}$,
with the
induced metric $g$ in $D$, and ${\bf r}$ be the position vector of $\Sigma$.
%Let $g$ be a smooth metric mentioned in Theorem \ref{mainTh} and
%let ${\bf r}$ be its smooth isometric embedding in $\mathbb R^3$.
Then,  for some $\alpha\in (0,1)$,
in the geodesic coordinates based a connected component of $\partial
D$ as in \eqref{geod1} and \eqref{geod2}, with $\Omega_t=[0,2\pi]\times (0,t)$,
\begin{equation*}|\nabla^k{M}|_{C^{\alpha}(\bar\Omega_{1/2})},
|\nabla^k{N}|_{C^{\alpha}(\bar\Omega_{1/2})}\\
\le C\left(\alpha, k, \, |g|_{C^{k+5}(\bar \Omega_1)},\,
\max_{t=0}\frac 1{|\nabla K|},\, \max_{t=0}\frac 1{k_g}\right).
\end{equation*}
\end{theorem}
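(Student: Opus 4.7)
\textbf{Proof plan for Theorem \ref{mainregu}.}

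The strategy is to combine the Lipschitz boundary bounds of Theorems \ref{N2} and \ref{M2} with the $L^p$ and H\"older boundary estimates for characteristically degenerate elliptic equations of \cite{HH}, reformulated as Lemmas \ref{regular1} and \ref{regular2} in the appendix. The idea is that $M$ and $u := N^{-2}-N^{-2}(s,0)$ both vanish linearly on $\{t=0\}$ and satisfy second-order elliptic equations whose principal symbol behaves like $\xi_1^2+t\xi_2^2$, so they are precisely the objects that the appendix machinery is designed to treat.

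I would begin by verifying that the equations governing $u$ and $M$ fit the hypotheses of the appendix lemmas. For $u$, equation \eqref{eq8/N} (combined with \eqref{eq9/N}--\eqref{eq10/Na}) gives a divergence form equation whose coefficients satisfy $C^{-1}(\xi_1^2+t\xi_2^2)\le a^{ij}\xi_i\xi_j\le C(\xi_1^2+t\xi_2^2)$, as verified in the proof of Theorem \ref{moser3}, and whose normal drift coefficient is $\widetilde A_2=2K_tB^2+O(t)$, which is strictly positive at $t=0$ by \eqref{eq12/N} and the assumption $|\nabla K|^{-1}\le C$. For $M$, equation \eqref{eq2M} has the same principal structure, but also carries a quadratic first-derivative nonlinearity $-(2M/(NL))a^{ij}\partial_iM\partial_jM$. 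Using $LN=M^2+KB^2$, $K_t(\cdot,0)>0$ so $K\ge C t$, and $M=O(t)$ from Theorem \ref{M2}, the coefficient $M/(NL)$ is bounded; so after multiplying $M$ by a suitable factor this nonlinearity becomes a lower-order quadratic term amenable to the Schauder iteration.

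The proof then proceeds by induction on $k$. For the base case, I would apply Lemma \ref{regular1} (or \ref{regular2}) to $u$ and $M$ to upgrade boundary regularity from $C^{0,1}$ (given by Theorems \ref{N2}, \ref{M2}) to $C^{1,\alpha}$, absorbing the quadratic nonlinearity as a bounded right-hand side via the $L^\infty$ gradient bound produced by the iteration itself. For the inductive step, assume $M,N\in C^{j,\alpha}(\bar\Omega_{1/2})$ with $j<k$. Differentiate \eqref{eq2M} and \eqref{eq8/N} tangentially: $\partial_s^{j}M$ and $\partial_s^{j}u$ satisfy degenerate equations of the same structural form, with right-hand sides controlled by $|g|_{C^{j+4}}$ and the inductive hypothesis. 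Applying the appendix lemmas again gives tangential $C^{j+1,\alpha}$ regularity. Normal derivatives are then recovered algebraically from the Codazzi equations \eqref{GC1}--\eqref{GC2}, which read
\begin{equation*}
M_t=N_s-\frac{B_t}{B}M,\qquad L_t=M_s-\frac{B_s}{B}M+\frac{B_t}{B}L+BB_tN,
\end{equation*}
together with $LN=M^2+KB^2$ and $N\ge 1/C$; each $\partial_t$ derivative is exchanged for one $\partial_s$ plus terms of lower order in $\partial_t$, so iterating exhausts all mixed derivatives.

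The main obstacle is to ensure that the structural hypotheses of Lemmas \ref{regular1}--\ref{regular2} are preserved when the $M$- and $u$-equations are tangentially differentiated, particularly because the coefficient $1/L$ appearing in \eqref{eq2M}--\eqref{eq3M} is singular as $t\to 0$. This singularity is neutralized by the linear vanishing $M=O(t)$, $u=O(t)$, together with the decomposition in Remark \ref{rmk-Coeff-M} isolating $(BK_t-B_tN^2)(s,0)=0$ on $t=0$ from Lemma \ref{Lemma1.1}, so that after differentiation the right-hand sides remain bounded in the norms required by the appendix lemmas. Once the boundary $C^{k,\alpha}$ estimates for $M$ and $N$ are obtained, a standard covering argument joining them with the interior estimate of Theorem \ref{inregularity} yields the conclusion.
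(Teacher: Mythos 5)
Your overall strategy (feed the degenerate boundary equations into the appendix machinery of Lemmas \ref{regular1}--\ref{regular2} and bootstrap) is the right family of ideas, but two of your steps would fail as written, and the paper's proof is organized precisely to avoid them. First, you propose to apply the appendix lemmas to the $M$-equation \eqref{eq2M} directly. Those lemmas are stated for the \emph{linear} equation \eqref{701}, and their key structural hypothesis is not mere positivity of the normal drift but the threshold $b_{2}(0)>2$ after normalizing the $tu_{tt}$-coefficient (equivalently $a>3/2$ in Theorem \ref{LpHol}). You verify neither this threshold for the $M$-equation nor how the quadratic gradient nonlinearity $-\frac{2M}{NL}a^{ij}\partial_iM\partial_jM$ is to be absorbed into a contraction scheme whose norms are the weighted $I_p$ and $I_\alpha$; saying its coefficient is bounded does not make it a "bounded right-hand side," since the right-hand side would then involve $(\partial_tM)^2$, which is exactly what one is trying to estimate. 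The paper never estimates $M$ through its own second-order equation: it applies the appendix lemmas only to the normalized equation \eqref{502} for $1/N$ (where the computation \eqref{307} shows the normalized drift equals exactly $3$ at the origin, which is why the threshold is met), and then reads off every derivative of $M$ from the first-order Codazzi system \eqref{309}, which expresses $\partial_sM$ and $\partial_tM$ algebraically in terms of $N$, $\nabla N$ and known data.

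Second, your plan to recover normal derivatives "algebraically from the Codazzi equations" works for $M$ (via $M_t=N_s-\frac{B_t}{B}M$) but not for $N$: in \eqref{sysNM1} the coefficient of $N_t$ is $L\sim ct$, so the system is characteristic at $t=0$ and $N_t$ cannot be solved for at the boundary; dividing by $L$ produces a $0/0$ quotient whose H\"older regularity is not free. The regularity of $\partial_tN$ (and of the weighted derivatives $t\partial_t^2N$, $t^{1/2}\partial_{st}N$) must come from the $I_\alpha$-estimates of Lemma \ref{regular2} themselves --- note that $\|u_t\|_{\dot C^\alpha}$ is part of the norm $I_\alpha(u)$ --- and the higher normal derivatives are obtained by differentiating the equation \eqref{502} in $t$ and reapplying Lemma \ref{regular2}, which is how the paper closes the induction. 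Your tangential-differentiation induction is compatible with this, but without these two corrections the argument does not go through.
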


\begin{proof} Let $\sigma$ be a connected component of $\partial\Sigma$ and take
the geodesic
coordinates based on $\sigma$ as in (\ref{geod1}) and
(\ref{geod2}). By Lemma \ref{Remark1.13},
Theorem \ref{moser3}, Theorem \ref{N2} and Theorem \ref{M2}, we
have
\begin{align}\label{709}
&\frac{1}{C^*}\le N\le C_*,\\
&|M(s,t)|+ |N(s,t)-N(s,0)|\le C_*t \quad\text {for any }t\in
[0,1],\label{723} \\
&\int_0^{\frac
12}\int_0^{2\pi}\left(t(\partial_tN)^2+(\partial_sN)^2\right)dsdt\le
C_* ,\label{724}\end{align} where $C_*$ is a positive constant
depending only on the quantities
in (\ref{maincond}). We now prove estimates of higher derivatives
near $(s,t)=(0,0)$.

We first rewrite the  equation (\ref{eq5/N}) for $1/N$. In view of
(\ref{GC3}), it is easy to see
\begin{align}\label{307}\begin{split}
NL&=t\left(\partial_tK(0)+\frac
{M^2}t+sc_1+tc_2\right)=t\bar a_{22},\\
N^2&=N^2(0)+(N^2-N^2(0)),\\
A_{12}&=3K_t(0)+(N^2-N^2(s,0))c_3+sc_4+Mc_5+tc_6,
\end{split}\end{align}
for some smooth functions $c_i$, $i=1, \cdots, 6$, of $s,t,M$ and
$N$. Dividing both sides of (\ref{eq5/N}) by $\bar a_{22}/N$
reduces it to the equation of $u=1/N$ in the form
\begin{equation}\label{502}
\mathcal{L}u=tu_{tt}-ta_{12}u_{st}+a_{11}u_{ss}+
b_{2}u_t+b_{1}u_s=f\quad\text{in } \mathbb R^2_+,
\end{equation}
where
\begin{align*}
&a_{12}=\frac {NM}{t\bar a_{22}},\quad a_{11}=\frac {N^2}{\bar a_{22}},
\quad
b_{1}=\frac {A_{11}}{\bar a_{22}},\\
&b_{2}=3+\frac 1{\bar a_{22}}\big((N-N(s,0))\bar
c_1+s\bar c_2+M\bar c_3+t\bar c_4\big),
\end{align*}
for some smooth functions $\bar c_i$, $i=1,...,4$, of $s,t, M$ and
$N$. It is easy to see that, for the equation (\ref{502}), all the
assumptions in Lemma \ref{regular1} are satisfied by the
hypotheses in the present theorem.  Therefore, we can conclude
that, for some cutoff function $\varphi_r$,
\begin{align*}&\|\varphi_rN^{-1}\|_{W^{1,6}(\mathbb R^2_+)}+
\|t\varphi_rN^{-1}\|_{W^{2,6}(\mathbb R^2_+)}\\
&\qquad+\|\varphi_r\partial_s^2N^{-1}\|_{L^{6}(\mathbb R^2_+)}+\|t^{\frac 12}
\varphi_r\partial_{st}N^{-1}\|_{L^{6}(\mathbb R^2_+)}\le C,\end{align*}
where $C$ is a positive constant
depending only on the quantities
in (\ref{maincond}).
We now record (\ref{sysNM1})
in the form
\begin{equation}
\begin{split}\label{309}
\partial_sM&=-\frac{L}{N}\partial_tN+\frac{2M}{N}\partial_sN
-\frac{B_t}{B}L+\frac{B_s}{B}M\\
&\qquad-BB_tN-\frac{2B_t}{B}\frac{M^2}{N}+\frac{1}{N}(B^2K)_t,\\
\partial_tM&=\partial_sN-\frac{B_t}{B}M.
\end{split}
\end{equation} Then,
$$\|\varphi_rM\|_{W^{1,6}(\mathbb R^2_+)}\le C.$$
By the Sobolev embedding in \cite{HH}(Lemma B.3), we have, for $\gamma=1-\frac 12-\frac 26=\frac 16$,
\begin{equation}\label{501}
|\varphi_r N|_{C^\gamma}+ |\varphi_r M|_{C^\gamma}+
|\varphi_r \partial_sN|_{C^\gamma}+
|\varphi_rt\partial_t(N^{-1})|_{C^\gamma}\le C.\end{equation} By \eqref{309},
we have $\partial_t M\in C^\gamma$.
Then $$
\frac{M^2}{t}=M\int_{0}^1\partial_2M(s,\theta t)d\theta\in C^\gamma
$$
%$ \text { where }\gamma=1-\frac 26=\frac 23.$
In view of (\ref{307}), we get
$$|\varphi_ra_{ij}|_{C^\gamma}+|\varphi_rb_{i}|_{C^\gamma}
+|\varphi_rf|_{C^\gamma}\le C,$$ for some smaller $r$ and some
constant $C$ under control.
Thus all the assumptions in Lemma
\ref{regular2} are satisfied if we take $\alpha=\gamma$
and hence,
$$I_\gamma(\varphi_rN^{-1})\le C_1.$$ Then combining with
(\ref{309}) yields
\begin{align*}&|\varphi_rN^{-1}|_{\dot{C}^{1,\gamma}}
+|D(\varphi_rN^{-1})|_{\dot{C}^{\gamma}}+
|D(\varphi_rM)|_{\dot{C}^{\gamma}}\\
&\qquad+|\partial_s(\varphi_rN^{-1})|_{\dot{C}^{1,\gamma}}+
|t\partial_{t}(\varphi_rN^{-1})|_{\dot{C}^{1,\gamma}}\leq
C_1,\end{align*} for some constant $C_1$ and smaller $r=r_1$
depending only on $|g|_{C^5(\bar D)}$ and the quantities in
(\ref{maincond}). Next, we proceed by induction.  Assume, for some
$k\ge 1$ and $r=r_k>0$,
\begin{align}\label{710}\begin{split}
&|\varphi_ra_{ij}|_{\dot{C}^{k,\gamma}}+|\varphi_rN^{-1}|_{\dot{C}^{k,\gamma}}
+|D(\varphi_rN^{-1})|_{\dot{C}^{k-1,\gamma}}
+|D(\varphi_rM)|_{\dot{C}^{k-1,\gamma}}\\
&\qquad+|\partial_{s}(\varphi_rN^{-1})|_{\dot{C}^{k,\gamma}}+
|t\partial_{t}(\varphi_rN^{-1})|_{\dot{C}^{k,\gamma}}\leq C_k,
\end{split}\end{align}
where $C_k$ and $r_k$ are positive constants depending only on
$|g|_{C^{4+k}(\bar D)}$ and the quantities
in (\ref{maincond}).
Applying Lemma \ref{regular2} to (\ref{502}) for
$\alpha=k+\gamma$, we  get
$$|D(\varphi_rN^{-1})|_{\dot{C}^{k,\gamma}}+ |\partial_{ss}(\varphi_rN^{-1})|_{\dot{C}^{k,\gamma}}+
|t(\varphi_rN^{-1})|_{\dot{C}^{k+2,\gamma}}\leq C_{k+1},$$
where $r=r_{k+1}$ and $C_{k+1}$ are positive constants also depending on $
C^{4+k}$-norm of $g$. This implies, with (\ref{309}),
$$|t\partial_{t}(\varphi_rN^{-1})|_{\dot{C}^{k+1,\gamma}}
+|D(\varphi_rM)|_{\dot{C}^{k,\gamma}}\leq C_{k+1}. $$
Thus we have completed the proof of
(\ref{710}) for $k+1$.

Finally, differentiating (\ref{502}) and (\ref{309})
in $t$ and using Lemma \ref{regular2} we  get  estimates for
higher order derivatives of $N$ an $M$.
Thus the present theorem
has been proved.
\end{proof}

By combining Theorem \ref{inregularity} and Theorem \ref{mainregu}, we obtain
the following global estimate.

\begin{theorem}\label{mainTh}
Let $k\ge 2$ be an integer,
$\Sigma$ be an Alexandrov-Nirenberg surface in $\mathbb R^3$ of class $C^{k+4}$,
with the
induced metric $g$ in $D$, and ${\bf r}$ be the position vector of $\Sigma$.
%Let $M$ be a $J_0$ punctured disk as
%mentioned in (\ref{A2}) and $g$ a smooth metric defined on $M$,
%denoted by $(M,g)$. Suppose that $M_+=M\backslash\partial M$,
%$$M_0=\partial M$ and  (\ref{A1}) is valid. Then for its smooth
%isometric embedding $\vec {\bf r}$ in $R^3$, the following
%estimate
Then, for any $\alpha\in (0,1)$,
\begin{equation*}|{\bf r}|_{C^{k,\alpha}(\bar D)}\\
\le C\left(k, \alpha, |g|_{C^{k+3}(\bar D)},\, \max_{\partial D}\frac
1{|\nabla K|},\, \max_{\partial D}\frac 1{|k_g|}\right).
\end{equation*}\end{theorem}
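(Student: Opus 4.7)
The plan is to piece together the interior estimate (Theorem \ref{inregularity}) with the boundary estimate near each component of $\partial D$ (Theorem \ref{mainregu}) via a partition of unity, and to convert the resulting bounds on the second fundamental form into bounds on the position vector $\bf r$ through the Gauss formula.

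First I would reduce the problem to an estimate of the second fundamental form $h_{ij}$ in a finite collection of coordinate charts on $\bar D$. Recall the structure equation
\begin{equation*}
{\bf r}_{ij}=\Gamma_{ij}^k{\bf r}_k+h_{ij}{\bf n},
\end{equation*}
together with ${\bf n}=({\bf r}_1\times{\bf r}_2)/|{\bf r}_1\times{\bf r}_2|$. Since $|g|_{C^{k+3}(\bar D)}$ is controlled, the Christoffel symbols $\Gamma_{ij}^k$ are bounded in $C^{k+1,\alpha}$. A standard bootstrap then shows that a $C^{k-2,\alpha}$ bound on $h_{ij}$ (in any fixed atlas whose transition functions are $C^{k,\alpha}$-controlled) upgrades, via the Gauss equation above, to a $C^{k,\alpha}$ bound on ${\bf r}$; the lower-order control is supplied by Lemma \ref{inball}, which locates the origin at distance bounded below from $\Sigma$, and by the $C^1$-control coming from $|g|_{C^0}$.

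Next I would cover $\bar D$ by finitely many open sets: an interior set $D_1\subset\subset D$, and one collar neighborhood $U_j$ of each of the $J$ boundary components $\sigma_j$ of $\partial D$, so that $\bar D\subset D_1\cup \bigcup_{j=1}^J U_j$. On $D_1$, Theorem \ref{inregularity} directly yields $|D^2{\bf r}|_{C^{k,\alpha}(\bar D_1')}\le C$ for any slightly smaller $D_1'\subset\subset D_1$, since all the quantities in (\ref{eq-PrincipalLowerBoundDependence}) are controlled by the hypotheses; note in particular that Lemma \ref{Lemma1.11} provides a lower bound of $\min_{D_1} K$ in terms of the allowed data (as $K>0$ in $\Sigma$ and $K=0$ only on $\partial\Sigma$ with $|\nabla K|$ bounded below, so $K\ge c\cdot\mathrm{dist}(\cdot,\partial D)$ in a collar, and $K$ is bounded below on $D_1$ by continuity and compactness after the collar is excised). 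On each $U_j$, I would introduce the geodesic coordinates \eqref{geod1}-\eqref{geod2} based on $\sigma_j$, in which Theorem \ref{mainregu} gives
\begin{equation*}
|\nabla^{k-2}M|_{C^\alpha(\bar\Omega_{1/2})}+|\nabla^{k-2}N|_{C^\alpha(\bar\Omega_{1/2})}\le C,
\end{equation*}
applied with its parameter chosen equal to $k-2$ (so its hypothesis $|g|_{C^{(k-2)+5}(\bar D)}=|g|_{C^{k+3}(\bar D)}$ matches ours). From $L=(M^2+KB^2)/N$ and the $C^{k-2,\alpha}$ control of $N$ bounded below by Lemma \ref{Remark1.13}, I recover the corresponding estimate for $L$ and hence for the full matrix $(h_{ij})$ in these coordinates. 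Because the transition from the geodesic coordinates $(s,t)$ to any underlying chart on $\bar D$ is $C^{k+2,\alpha}$-controlled in terms of $|g|_{C^{k+3}(\bar D)}$ (and $k_g$ bounded below), these boundary estimates transfer to any fixed atlas and combine with the interior estimate on $D_1$ to produce a global $C^{k-2,\alpha}$ bound on $h_{ij}$ over $\bar D$.

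Finally, feeding this bound on $(h_{ij})$ and the corresponding $C^{k,\alpha}$ bound on $g$ back into the Gauss formula ${\bf r}_{ij}=\Gamma_{ij}^k{\bf r}_k+h_{ij}{\bf n}$, together with the elementary bound $|{\bf r}|_{C^0}\le\mathrm{diam}_g(D)$ which is controlled by $|g|_{C^0}$, I obtain the asserted estimate $|{\bf r}|_{C^{k,\alpha}(\bar D)}\le C$ with $C$ depending only on the listed quantities. The main obstacle is not conceptual but bookkeeping: checking that the dependencies of the constants in Theorems \ref{inregularity} and \ref{mainregu} really reduce to the three quantities $|g|_{C^{k+3}(\bar D)}$, $\max_{\partial D}1/|\nabla K|$, and $\max_{\partial D}1/|k_g|$ stated in the theorem, which in turn requires the intrinsic-diameter control via Lemma \ref{Lemma1.11} and the positive lower bound on $K$ away from $\partial D$ produced by the non-degenerate vanishing condition on $\nabla K$.
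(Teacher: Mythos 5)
Your proposal follows exactly the route the paper takes: the paper's entire proof of this theorem is the single sentence that it follows ``by combining Theorem \ref{inregularity} and Theorem \ref{mainregu}'', and your patching of the interior estimate with the boundary estimates in geodesic coordinates, followed by the bootstrap through the structure equation ${\bf r}_{ij}=\Gamma_{ij}^k{\bf r}_k+h_{ij}{\bf n}$, is precisely the intended (but unwritten) argument; your index bookkeeping (applying Theorem \ref{mainregu} with parameter $k-2$, so that $C^{(k-2)+6}=C^{k+4}$ and $|g|_{C^{(k-2)+5}}=|g|_{C^{k+3}}$ match the hypotheses) is also consistent. The one soft spot is your claim that $\inf_{D_1}K$ is controlled by the listed data: Lemma \ref{Lemma1.11} is an \emph{upper} bound for $H$ and says nothing about a lower bound for $K$, and ``continuity and compactness'' yields a positive minimum for each individual surface but not a bound uniform over the class; only the collar estimate $K\ge c\,t$ coming from $\max_{\partial D}1/|\nabla K|$ is genuinely quantitative, and it does not reach the deep interior. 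This uniformity issue is, however, equally unaddressed in the paper, whose constant in Theorem \ref{inregularity} explicitly depends on $1/\inf_{D_1}K$ while the constant asserted in the present theorem does not list that quantity; apart from inheriting this shared gap (and the misattribution to Lemma \ref{Lemma1.11}), your argument is the paper's argument.
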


Theorem \ref{them-main} follows as a consequence of Theorem \ref{mainTh}.

\section{Appendix: $W^{2,p}$ Estimates and Schauder Estimates}
\label{sec-Appendix}

In this section, we prove
several regularity estimates for
degenerate elliptic equations as (\ref{eq5/N}) with characteristic degeneracy
on boundary. Most related techniques and notations are used in \cite{HH}.

For the sake of convenience,  we first give a brief explanation.
For $p\in (1,\infty)$ and
$\alpha\in (0,1)$, define $I_p(u)$ and $I_\alpha(u)$ by
$$I_p(u)=\|u\|_{L^p(\mathbb R^2_+)}+\|u_t\|_{L^p(\mathbb R^2_+)}+
\|u_{ss}\|_{L^p(\mathbb R^2_+)}+\|t^{\frac 12} u_{st}\|_{L^p(\mathbb R^2_+)}
+\|tu_{tt}\|_{L^p(\mathbb R^2_+)},
$$
and
$$I_\alpha(u)=\|u\|_{\dot{C}^{\alpha}(\bar{\mathbb R}^2_+)}+\|u_t\|_{\dot{C}^{\alpha}(\bar{\mathbb R}^2_+)}
+\|u_{ss}
\|_{\dot{C}^\alpha(\bar{\mathbb R}^2_+)} +\|t^{\frac
12}u_{st}\|_{\dot{C}^\alpha(\bar{\mathbb R}^2_+)}
+
\|tu_{tt}\|_{\dot{C}^\alpha(\bar{\mathbb R}^2_+)}.$$
For  an $\alpha$ in
$\mathbb{R}_+^1\backslash \mathbb{Z}$, we define a function
$f$ in $\dot{C}^\alpha(\bar {\mathbb R}^2_+)$  if
\begin{equation}\label{003}\|f\|_{\dot{C}^\alpha(\bar{\mathbb R}^2_+)}=
\sum_{|\beta|\le
[\alpha]}|D_x^{\beta}f|_{C(\bar{\mathbb R}^2_+)}+
[f]_{\dot{C}^{\alpha}(\bar{\mathbb R}^2_+)}<\infty,
\end{equation}
where
\begin{equation}[f]_{\dot{C}^\alpha(\bar{\mathbb R}^2_+)}
=\sum_{|\beta|=[\alpha]}\underset{ y\ge 0, x\neq\bar{x}\in
\mathbb{R}^1}{\sup}\left(\frac{| D_x^\beta f(x, y)-D_x^\beta
f(\bar{x}, y)|}{|x-\bar{x}|^{\alpha-[\alpha]}}\right).\end{equation}
It should be emphasized that the derivatives involved in $\dot C^{\alpha}$-norm are all $x$-directions.
Denote by
$\overline{W}^{2,p}$ as the completion of
$C_c^\infty(\bar{\mathbb R}^2_+)$ under the norm $I_p$.

For $a>3/2$, consider a
degenerate elliptic boundary value problem
\begin{equation}
\begin{split}\label{706}
&Lu=t\partial^2_{t}u+a\partial_tu+\partial_{ss}u=f
\quad\text {in }\mathbb R^2_+,\\
&u\rightarrow 0\text { as }s^2+t^2\rightarrow \infty\text
{ and }u \text { is bounded near }t=0.
\end{split}
\end{equation}
We recall a result about a special solution $u=K(f)$. (See  \cite{HH} for details.)
%finds the resolvent $v=K(f)$ of (\ref{706}) and presents
%its $L^p$ and H\"older estimates as follows.

\begin{theorem}\label{LpHol}
Let $a>3/2$ be a constant  and let $p\in [2,  \infty)$ and $\alpha\in (0,1)$.
Then, for any $f\in C^{\infty}(\bar {\mathbb R}^2_+)$ with
$supp\{f\}\subset \{|s|\le T,0\le t\le T\}$,  $u=K(f)$ satisfies
\begin{align}\label{td2}\begin{split}&\|tu_{tt}\|_{L^p(\mathbb{R}^{2}_+)}+
\|t^\frac{1}{2}u_{st}\|_{L^p(\mathbb{R}^2_+)}+
\|u_{ss}\|_{L^p(\mathbb{R}^2_+)}+\|u_t\|_{L^p(\mathbb{R}^2_+)}+
\|u\|_{L^p(\mathbb{R}^2_+)}\\
&\qquad\le  C_{pT}\|f\|_{L^p(\mathbb{R}^2_+)},
\end{split}\end{align}
and
\begin{align}\label{td1}\begin{split}
& [tu_{tt}]_{\dot
C^\alpha(\bar{\mathbb R}^{2}_+)}+
[t^\frac{1}{2}u_{st}]_{\dot
C^\alpha(\bar{\mathbb R}^{2}_+)}+[u_{ss}]_{\dot
C^\alpha(\bar{\mathbb R}^{2}_+)}+[u_t]_{\dot
C^\alpha(\bar{\mathbb R}^{2}_+)}+|u|_{
C(\bar{\mathbb R}^{2}_+)}\\
&\qquad \le  C_{\alpha
T}|f|_{\dot
C^\alpha(\bar{\mathbb R}^{2}_+)},\end{split}\end{align}
for some universal constants  $C_{pT}$  and $C_{\alpha T}$
depending only on $n$, $a$ and $T$,  and $p$ and $\alpha$ respectively.
\end{theorem}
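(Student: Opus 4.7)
The plan is to construct $u = K(f)$ explicitly and reduce the problem to classical Calderón--Zygmund and Schauder theory on a higher-dimensional Euclidean space via the raising-dimension device already used in Lemma~\ref{lemma-Sobolev}. First I would perform the change of variables $\tau = 2\sqrt{t}$ and a rescaling in $s$. A direct computation shows
\begin{equation*}
t\partial_t^2 + a\,\partial_t + \partial_{ss}
= \tfrac14\bigl(\partial_\tau^2 + \tfrac{4a-1}{\tau}\partial_\tau\bigr) + \partial_{ss},
\end{equation*}
so that on functions of $(s,\tau)$, the operator $L$ becomes, up to a constant factor, $\partial_{ss} + \partial_\tau^2 + \frac{N-1}{\tau}\partial_\tau$ with $N = 4a$. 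Since $a > 3/2$, we have $N > 6$, and for an integer $n$ in the range allowed by $a$, functions $v(s,\tau)$ lift to radial-in-$x$ functions $U(s,x) = v(s,|x|)$ on $\mathbb{R}\times\mathbb{R}^n$ on which the operator acts as the Euclidean Laplacian $\Delta_{s,x}$.

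Next I would solve $\Delta_{s,x} U = F$ on $\mathbb{R}^{n+1}$ with $F$ the radial-in-$x$ extension of $f$, using the Newton potential, and define $K(f)$ to be the trace of $U$ on $\{s\}\times\{\tau\ge 0\}$. Classical potential theory then gives the kernel an explicit form; the weights $t\partial_t^2 u$, $t^{1/2}\partial_{st}u$, $\partial_{ss}u$, $\partial_t u$ correspond after pullback to $\partial_{x_ix_j}U$, $\partial_{sx_i}U$, $\partial_{ss}U$, and $|x|^{-1}\partial_\tau U \sim \partial_{x_1}U/|x|$ (which is controlled by second derivatives of $U$ via radial symmetry). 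In particular, all five quantities on the left-hand side of \eqref{td2} pull back to second derivatives of the Newton potential of $F$, so the Calderón--Zygmund $L^p$ theorem on $\mathbb{R}^{n+1}$ furnishes the bound \eqref{td2} once the weighted norms on $\mathbb{R}^2_+$ are translated into unweighted $L^p$ norms on $\mathbb{R}^{n+1}$ via the coarea-type identity $\int u^p t\,ds\,dt \sim \int U^p\, ds\,dx$ (cf.~\eqref{weight3}--\eqref{weight4}).

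For \eqref{td1}, I would proceed analogously with the classical Schauder estimate $[D^2U]_{C^\alpha(\mathbb{R}^{n+1})} \le C [F]_{C^\alpha(\mathbb{R}^{n+1})}$. The one subtlety is that the Hölder seminorm in \eqref{003} is taken only in the $s$-variable, while the standard Schauder estimate on $\mathbb{R}^{n+1}$ controls the full Hölder norm. This is actually advantageous: by restricting the classical Schauder seminorm to $x$-constant slices (which correspond to fixed $t$), one obtains an estimate stronger than what is needed, and the pull-back of $[D^2 U]_{C^\alpha_s}$ gives exactly the five terms on the left of \eqref{td1}. The support condition on $f$ plus the decay of the Newton kernel justifies the applicability of the classical estimates on a fixed compact region, producing the constants $C_{pT}$ and $C_{\alpha T}$.

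The main technical obstacle is the non-integer dimension $N = 4a$: since $a$ is a continuous parameter, one cannot literally lift to $\mathbb{R}^{n+1}$. I would handle this by either (i) verifying that all the Calderón--Zygmund and Schauder estimates for the operator $\partial_\tau^2 + \frac{N-1}{\tau}\partial_\tau + \partial_{ss}$ depend on $N$ only through the coefficient (treating the operator as a Bessel-type operator and working intrinsically with its Green's function via the modified Bessel transform in $\tau$, Fourier in $s$), or (ii) first proving the estimates for integer $N$ by the lifting argument above and then extending to real $N > 3$ by analytic continuation/monotonicity of the kernel in $N$. Either route requires careful estimates on the Green's function $G_N(s-s', \tau, \tau')$ near the boundary $\tau = 0$, which is where the characteristic degeneracy of the original operator manifests; the key point to verify is that $G_N$ and its relevant derivatives are Calderón--Zygmund kernels with respect to the anisotropic metric $d((s,t),(s',t')) = |s-s'| + |\sqrt{t}-\sqrt{t'}|$ on $\mathbb{R}^2_+$, under which the measure $t^{N/2-1}ds\,dt$ is doubling.
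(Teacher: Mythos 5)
The paper does not prove Theorem \ref{LpHol} at all: it is quoted verbatim from the reference \cite{HH} (``We recall a result\dots See \cite{HH} for details''), so your proposal must stand on its own. Its general flavor --- the substitution $\tau=2\sqrt t$ turning $L$ into a Bessel-type operator, as in the proof of Lemma \ref{lemma-Sobolev} --- is reasonable, but the argument has genuine gaps. First, a computation: $t\partial_t^2+a\partial_t=\partial_\tau^2+\frac{2a-1}{\tau}\partial_\tau$, so the effective dimension is $N=2a$, not $4a$; for $a>3/2$ this gives only $N>3$. More importantly, the non-integer dimension is not a peripheral ``technical obstacle'' to be deferred: it is the entire content of the theorem. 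Your route (ii) (``analytic continuation/monotonicity of the kernel in $N$'') is not a workable strategy --- there is no mechanism for interpolating Calder\'on--Zygmund bounds between integer dimensions without already possessing the kernel estimates that route (i) demands --- and route (i) (direct estimates on the Green's function of $\partial_\tau^2+\frac{N-1}{\tau}\partial_\tau+\partial_{ss}$) is precisely the unperformed work that constitutes the proof in \cite{HH}. There is also a measure mismatch you elide: the norms in \eqref{td2} are with respect to $ds\,dt$ on $\mathbb R^2_+$, which pulls back to $\tau\,ds\,d\tau$, whereas Calder\'on--Zygmund theory in the lifted space yields bounds with respect to $\tau^{N-1}ds\,d\tau$; transferring between these requires weighted ($A_p$) singular-integral theory or direct kernel bounds, and your ``coarea-type identity'' as written (with weight $t\,ds\,dt$, i.e.\ $\tau^3 d\tau$) matches neither measure.

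The second gap concerns \eqref{td1}. The seminorm defined in \eqref{003} measures H\"older continuity \emph{only in the tangential variable $s$ at fixed $t$}, and this applies to the right-hand side $|f|_{\dot C^\alpha}$ as much as to the left. You read the one-directional seminorm as making the conclusion weaker than classical Schauder (``advantageous''), but it equally makes the hypothesis weaker: $f$ is not assumed H\"older continuous in $t$ (equivalently in the lifted radial variable), so the classical Schauder estimate $[D^2U]_{C^\alpha}\le C[F]_{C^\alpha}$ on $\mathbb R^{1+N}$ is simply not applicable. What \eqref{td1} asserts is a \emph{partial} (tangential) Schauder estimate, which is a different theorem requiring its own proof --- again via explicit analysis of the degenerate kernel near $\tau=0$. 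So the proposal does not succeed in reducing Theorem \ref{LpHol} to classical results; the hard steps it identifies are exactly the ones it leaves undone.
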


In the following, we study the regularity of solutions of
\begin{equation}\label{701}
\mathcal{L}u=tu_{tt}-ta_{12}u_{st}+a_{11}u_{ss}+b_{2}u_t+b_{1}u_s=f\text{
in } \mathbb R^2_+.
\end{equation}
Let $\varphi\in C_0^{\infty}(B_1)$ be a cut-off function and
$\varphi=1$ in $B_{1/4}$.  Define
$$\varphi_r(s,t)=\varphi\left(\frac sr,\frac tr\right).$$
Now we have two
lemmas.

\begin{lemma}\label{regular1}
Let $a_{12}, b_{1}$ be bounded and $a_{11}$ and $b_2$ be continuous near
the origin $0\in\mathbb R^2$ with
$a_{11}(0)=1$ and $b_{2}(0)>2$.
Suppose $u\in C^2({\mathbb R}^2_+)\cap L^{\infty}_{loc}(\bar{\mathbb R}^2_+)$,
with $tu_{t},u_s\in L^2_{loc}(\bar{\mathbb R}^2_+)$, satisfies \eqref{701},
for some  $f\in L^\infty_{loc}(\bar{\mathbb R}_+^2)$. Then, there exists an $r>0$ such that
$$\|D(\varphi_ru)\|_{L^6(\mathbb R^2_+)}+ \|\varphi_rtu\|_{W^{2,6}
(\mathbb R^2_+)}+\|t^{\frac 12}\varphi_r\partial_{st}u\|_{{L^6}
(\mathbb R^2_+)}+\|\varphi_r\partial^2_{s}u\|_{{L^6}
(\mathbb R^2_+)}+ |\varphi_ru|_{C^{\frac 23}(\bar {\mathbb R}^2_+)}\leq C,$$
where $C$ is a positive constant depending only on the $L^2$-norms of $\varphi_{2r}u$,
$\varphi_{2r}u_s$ and $t\varphi_{2r}u_t$,  the
modulus continuity  of $a_{11}$ and $b_{2}$ at $0$,
and the $L^{\infty}$-norms of $\varphi_{2r} a_{12}$, $\varphi_{2r}
b_{1}$ and $\varphi_{2r}f$.
\end{lemma}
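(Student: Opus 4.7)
The strategy is to treat \eqref{701} as a perturbation of the frozen-coefficient model operator
$L_0 u := t u_{tt} + b_2(0) u_t + u_{ss}$,
which is exactly the operator of Theorem \ref{LpHol} with $a = b_2(0)$; the hypothesis $b_2(0) > 2 > 3/2$ is what makes the weighted $L^p$ estimate applicable. The radius $r > 0$ will be chosen small enough that both $|a_{11}-1|$ and $|b_2 - b_2(0)|$ are as small as we wish on $\operatorname{supp}\varphi_{2r}$; this is possible by the continuity of $a_{11}$ and $b_2$ at the origin.

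Set $v = \varphi_r u$. A direct computation using $\mathcal{L}u = f$ gives
\begin{equation*}
L_0 v \;=\; \varphi_r f + \varphi_r(1-a_{11})u_{ss} + \varphi_r(b_2(0)-b_2)u_t + \varphi_r\, t a_{12}u_{st} - \varphi_r b_1 u_s + [L_0,\varphi_r]u.
\end{equation*}
The commutator involves only first derivatives of $u$ multiplied by bounded functions supported in $\operatorname{supp}\nabla\varphi_r$, hence is controlled by the $L^2$-data $\|\varphi_{2r}u\|_{L^2}$, $\|\varphi_{2r}u_s\|_{L^2}$ and $\|t\varphi_{2r}u_t\|_{L^2}$ supplied by the hypothesis. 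Applying Theorem \ref{LpHol} with $p=2$ to $v$ yields $I_2(v) \le C\|L_0 v\|_{L^2}$. The two perturbation terms $\varphi_r(1-a_{11})u_{ss}$ and $\varphi_r(b_2(0)-b_2)u_t$ contribute at most $\varepsilon I_2(v)$; the term $\varphi_r t a_{12}u_{st}$, via the pointwise bound $t \le r^{1/2} t^{1/2}$ on $\operatorname{supp}\varphi_r$, contributes at most $Cr^{1/2}\|a_{12}\|_\infty I_2(v)$. For $\varepsilon$ and $r$ small enough, both are absorbed into the left-hand side. The remaining terms $\varphi_r b_1 u_s$ and $\varphi_r f$ are bounded in $L^2$ by hypothesis, producing $I_2(\varphi_r u) \le C$.

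To upgrade to $L^6$, I would apply Lemma \ref{lemma-Sobolev}(1) to $\varphi_r u$, $\varphi_r u_s$ and (after routine localization) $\varphi_r u_t$, which converts the weighted $H^1$-control implicit in $I_2(v)$ into $L^6$-control of $u$, $u_s$ and $u_t$ on a slightly smaller neighborhood. Then I repeat the frozen-coefficient perturbation argument with a smaller cutoff $\varphi_{r_1}$, $r_1 < r$, and $p = 6$: the absorbable terms absorb exactly as before, while $\varphi_{r_1}b_1 u_s$ and $\varphi_{r_1} f$ are now under $L^6$-control by the previous step and by the local boundedness of $f$. Theorem \ref{LpHol} with $p=6$ then delivers $I_6(\varphi_{r_1}u) \le C$, which contains all the $L^6$-bounds in the statement. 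Finally, the ordinary Sobolev embedding $W^{1,6}(\mathbb{R}^2) \hookrightarrow C^{2/3}(\mathbb{R}^2)$ applied to $\varphi_{r_1} u$ yields the $C^{2/3}$-estimate.

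The main obstacle is the lower-order term $\varphi_r b_1 u_s$: it carries no vanishing factor (in $t$ or as $r\to 0$) that would permit absorption after localization, so smallness of the cutoff does not help. This forces the two-step bootstrap described above: first obtain $I_2$-control and then use the weighted Sobolev embedding of Lemma \ref{lemma-Sobolev} to raise the integrability of $u_s$ to $L^6$ before invoking Theorem \ref{LpHol} a second time. A secondary point requiring care is the mixed derivative term $\varphi_r ta_{12}u_{st}$, but here the explicit factor $t$ provides the needed smallness in $r$ via the weighted bound $\|t^{1/2}u_{st}\|_{L^p}\le I_p(v)$.
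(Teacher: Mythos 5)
Your overall architecture (freezing the coefficients at the origin, absorbing the perturbations $\varphi_r(1-a_{11})u_{ss}$, $\varphi_r(b_2(0)-b_2)u_t$ and the $t a_{12}u_{st}$ term for small $r$, then bootstrapping from $p=2$ to $p=6$ via the weighted Sobolev inequality of Lemma \ref{lemma-Sobolev}(1) because the $b_1u_s$ term cannot be absorbed) matches the paper's proof. But there is a genuine gap at the central step. You write ``Applying Theorem \ref{LpHol} with $p=2$ to $v$ yields $I_2(v)\le C\|L_0v\|_{L^2}$.'' Theorem \ref{LpHol} is not an a priori estimate valid for every solution of $L_0v=g$; it is an estimate for the one particular solution $v=K(g)$ produced by the kernel $K$. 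To use it on $v=\varphi_r u$ you must first know that $\varphi_r u$ coincides with $K(L_0(\varphi_r u))$, and separately you must know $I_2(\varphi_r u)<\infty$ before you can absorb terms of size $\varepsilon I_2(v)$ into the left-hand side. Neither is available: the hypotheses give only $u\in L^\infty_{loc}$, $tu_t,u_s\in L^2_{loc}$ and interior $C^2$ regularity, so the quantities $u_{ss}$, $t^{1/2}u_{st}$, $tu_{tt}$ appearing in $I_2(\varphi_r u)$ are not known to be finite near the characteristic boundary $t=0$, and for a degenerate equation a bounded solution need not agree with the distinguished solution $K(g)$ without a uniqueness argument.

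The paper closes exactly this gap in two moves that are absent from your proposal. First, instead of estimating $\varphi_r u$ directly, it runs a contraction mapping in the ball $S_2=\{v:\ I_2(v)\le I^*\}$ for the integral equation $v=\lambda^2K(f_r)-K(Q_\lambda(v))$, which \emph{constructs} a solution $v$ of the localized equation with $I_2(v)\le C\|f_r\|_{L^2}$. Second, it proves $v=\varphi_r u$ by a maximum-principle comparison with the barrier $\delta t^{\epsilon+1-a}$; this is where the hypothesis $b_2(0)>2$ (not merely $>3/2$) is actually used, via $\mathcal L_1(t^{\epsilon+1-a})\le 0$ and the blow-up of $t^{\epsilon+1-a}$ as $t\to0$ against the boundedness of $\varphi_r u$. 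The fact that your argument never invokes $b_2(0)>2$ beyond ``$>3/2$'' is the telltale sign of the missing uniqueness/identification step. (Your replacement of the paper's $\lambda$-scaling by the pointwise bound $t\le r^{1/2}t^{1/2}$ on $\operatorname{supp}\varphi_r$ to make the $a_{12}$-term small is a legitimate simplification, and the rest of the bootstrap is sound once the identification issue is repaired.)
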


\begin{proof} We write
\begin{equation}\label{702}
a_{11}=1+\bar a_{11},\, \, b_{2}=a +\bar b_{2} \text { with }\bar
a_{11}(0)=\bar b_{2}(0)=0, \, \, a>2,
\end{equation}
for some continuous functions $\bar a_{11}$ and $\bar b_{2}$ and some
constant $a$. Set $u_r=\varphi_ru$.  Then $u_r$
satisfies\begin{equation}\label{704}
\mathcal{L}_1u_r\equiv t\partial_{tt}u_r+\partial_{ss}u_r+a\partial_tu_r+Q(u_r)=f_r,
\end{equation}
where
$$Q(u_r)=\varphi_{2r}\left(\bar b_{2}\partial_tu_r
-ta_{12}\partial_{ts}u_r+\bar a_{11}\partial_{ss}u_r\right),$$
and
\begin{align*}f_r&=
(\mathcal{L}\varphi_r)u+2t\partial_t\varphi_ru_t-
2ta_{12}\partial_s\varphi_ru_t-2ta_{12}\partial_t\varphi_ru_s+2\partial_s\varphi_ru_s\\
&\qquad -b_{1}\partial_su_r+\varphi_rf.
\end{align*}
By the assumption
of the present lemma, it is easy to see $f_r\in L^2(\mathbb R^2_+)$.
For some $\lambda\in (0,1]$, change
the variables $s\rightarrow \lambda^{-1}s,t\rightarrow
\lambda^{-2}t,$ and still denote the new variables
by $s,t$. Then equation \eqref{704} is reduced to
\begin{equation}\label{705}
t\partial_{tt}u_r+\partial_{ss}u_r+a\partial_tu_r+Q_\lambda(u_r)=f_{r,\lambda}=\lambda^2f_r,
\end{equation}
where
$$Q_\lambda(u_r)=\varphi_{2r}\left(\bar b_{2}\partial_tu_r-\lambda ta_{12}
\partial_{st}u_r+\bar a_{11}\partial_{ss}u_r\right).$$
Using the operator $K$ in Theorem \ref{LpHol}, we can rewrite
(\ref{705}) in an  integral equation
\begin{equation}\label{707}
u_r=R(u_r)=\lambda^2K(f_{r})-K(Q_\lambda(u_r)).
\end{equation}
Set
$$I^*= I_2(K(f_{r})),$$
and
$$S_2=\{v\in \overline{W}^{2,2}:\, I_2(v)\leq I^*\}.$$
We note, by Theorem \ref{LpHol},
$$I^*\le C\|f_r\|_{L^2}.$$
By (\ref{td2}), we have, for any $v\in S_2$,
\begin{align*} I_2(R(v))&\leq
\lambda^2I_2(K(f_{r}))+I_2(K(Q_\lambda(v)))\\&\leq
\lambda^2I^*+\big(\sup|\bar b_{2}\varphi_{2r}|+\sup|\bar
a_{11}\varphi_{2r}|+\lambda\sup|\varphi_{2r}
a_{12}|\big)I_2(v)\\
&\leq \left(\lambda^2+ \frac 12\right)I^*\le I^*,
\end{align*} if  $\lambda$ and $r $ are chosen small enough. This
follows from the assumptions on the continuity of $\bar a_{11},\bar
b_{2}$ at $0$ and the boundedness of $a_{12}$. We also have
$$I_2(R(v_1)-R(v_2))=I_2(Q_\lambda(v_1)-Q_\lambda(v_2))\leq \frac 12I_2(v_1-v_2),$$
for some smaller $\lambda$ and $r$. Then, by the contraction mapping principle,
there exists a $v\in S_2$ such that
$$v=R(v)=-K(Q_\lambda(v))+\lambda^2K(f_r).$$
Pulling back to  the original
coordinates $(s,t)$, we get
$$
\mathcal{L}_1v=tv_{tt}+v_{ss}+av_t+Q(v)=f_r,$$
and
\begin{equation}\label{708}
I_2(v)\le
C\|f_r\|_{L^2},
\end{equation}
for some constant $C$ under control. Lemma 5.2 in
\cite{HH} yields
$$|v(s,t)|\le \begin{cases} Ct^{-a+\frac12}&\text{for }t\ge 4,\\
C|s|^{-1}&\text{for }|s|\ge 4,\end{cases}$$
and hence, for any $\delta>0$ and
$\epsilon>0$,
\begin{equation*}
\lim_{R\rightarrow\infty}\inf_{s^2+t^2=R^2}(u_r-v+\delta
t^{\epsilon+1-a})\geq 0.
\end{equation*}
Also by the definition of $I_2(v)$, we have $tv\in
H^2(\Omega_1)$, where $\Omega_1=\mathbb R^1\times (0,1)$, and hence
%$D_{(s,t)}(tv)\in L^q(\Omega_1)), \forall q>2$ and the Sobolev
$tv\in C^\alpha(B_R(0)\cap\bar
\Omega_1)$, for any $\alpha\in (0,1)$, by the Sobolev embedding.
%$ \alpha=1-\frac 2q$embedding theorem tells us .
Fixing $\epsilon>0$ such that
$a>2+\epsilon$, we have, for any $\delta>0$,
\begin{align*}
\, &\inf_{|s|\le R,\, t\rightarrow 0}(u_r-v+\delta
t^{\epsilon+1-a})\\
\geq&\,  \inf_{|s|\le R,\, t\rightarrow
0}t^{\epsilon+1-a}\left(t^{a-1-\epsilon}u_r+\frac
\delta2\right)+\inf_{|s|\le R,\, t\rightarrow 0}\frac
1t\left(-tv+\frac \delta2 t^{\epsilon+2-a}\right)\geq 0.
\end{align*}
Note that
\begin{equation*}
\mathcal{L}_1(t^{\epsilon+1-a})=(\epsilon+1-a)(\epsilon+\varphi_{2r}\bar
b_{2}) t^{\epsilon-a}\leq 0,
\end{equation*}
for some smaller $ r $ independent of $\delta$.
Hence,
$$\mathcal{L}_1(u_r-v\pm\delta t^{\epsilon+1-a})\lessgtr0.$$
Then an
application of the maximum principle yields
$$|u_r-v|\leq \delta t^{\epsilon+1-a}.$$ Passing to the limit
$\delta\rightarrow 0$, we have $u_r=v$. Therefore from (\ref{708})
and the definition of $I_2(v)$ it follows that, for some constant
$C$ under control,
$$\int t(\partial_t\partial_su_r)^2+(\partial_{ss}u_r)^2\le C,$$
which implies  $\partial_su_r\in L^6(\mathbb R^2_+)$ by (\ref{weight1}).
So far we have proved $t\partial_tu_r, \partial_su_r\in
L^6(\mathbb R^2_+)$. Repeating the same arguments, we can
prove that $I_6(u_r)$ is bounded for a smaller $r$ and hence,
$\partial_tu_r,\partial_su_r\in L^6(\mathbb R^2_+)$. Using the Sobolev
embedding theorem, we have $u_r\in C^\alpha(\bar{\mathbb R}^2_+)$
with $\alpha=2/3$. This ends the proof of the present lemma.
\end{proof}

\begin{lemma}\label{regular2}
In addition to the hypotheses in Lemma \ref{regular1}, we assume,
for some $\alpha\in
\mathbb R^1_+\setminus \mathbb Z$,
$$a_{12}, a_{11},
b_{2},b_{1}\in \dot C^{\alpha}(\bar {\mathbb R}^2_+),
\quad  f\in
\dot{C}^\alpha_{loc}(\bar{\mathbb R}_+^2),$$
and
$$u,tu_{t},u_s\in C^2(\mathbb R^2_+)\cap
\dot{C}_{loc}^\alpha(\bar{\mathbb R}^2_+).$$ Then, there
exists an $r=r(\alpha)>0$ such that
$$I_\alpha(\varphi_ru)\leq C,$$
where $C$ is a positive constant depending only on $\alpha$,
the $\dot C^{\alpha}$-norms of
$\varphi_{2r}u$,
$\varphi_{2r}u_s$, $t\varphi_{2r}u_t$ and
$\varphi_{2r}f$, and the $\dot C^{\alpha}$-norms
of  $\varphi_{2r}a_{12}$, $\varphi_{2r} a_{11}$,  $\varphi_{2r}
b_{2}$ and $\varphi_{2r}b_{1}$.
\end{lemma}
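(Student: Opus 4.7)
The plan is to mimic the contraction-mapping argument already used in the proof of Lemma \ref{regular1}, but replace the $L^p$-estimate \eqref{td2} for the model operator with the Schauder estimate \eqref{td1}. Concretely, I would again freeze coefficients at the origin and write, with $u_r=\varphi_r u$,
\begin{equation*}
\mathcal L_1 u_r \;=\; t\partial_{tt}u_r + \partial_{ss}u_r + a\,\partial_t u_r + Q(u_r) \;=\; f_r,
\end{equation*}
where $Q(u_r)=\varphi_{2r}\bigl(\bar b_2\,\partial_t u_r - t a_{12}\,\partial_{st}u_r + \bar a_{11}\,\partial_{ss}u_r\bigr)$ collects the coefficient perturbations (with $\bar a_{11}=a_{11}-1$, $\bar b_2 = b_2 - a$, both vanishing at $0$), and $f_r$ absorbs the commutator terms coming from the cutoff and the first-order coefficient $b_1 u_s$. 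The assumed $\dot C^\alpha$ regularity of $u_s, tu_t, u$ and of $f$ on $\operatorname{supp}\varphi_{2r}$ ensures $f_r\in\dot C^\alpha(\bar{\mathbb R}^2_+)$ with a norm under control.

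After the rescaling $s\to\lambda^{-1}s$, $t\to\lambda^{-2}t$, the equation becomes
\begin{equation*}
t\partial_{tt}u_r+\partial_{ss}u_r+a\partial_t u_r + Q_\lambda(u_r) = \lambda^2 f_r,
\end{equation*}
with $Q_\lambda(u_r)=\varphi_{2r}\bigl(\bar b_2\partial_t u_r - \lambda t a_{12}\partial_{st}u_r+\bar a_{11}\partial_{ss}u_r\bigr)$. Using the solution operator $K$ from Theorem \ref{LpHol}, I would recast this as the integral equation $u_r=R(u_r):= \lambda^2 K(f_r) - K(Q_\lambda(u_r))$ and seek a fixed point in
\begin{equation*}
S_\alpha \;=\; \{\,v\in \dot C^\alpha(\bar{\mathbb R}^2_+) : I_\alpha(v)\le I^*\,\},\qquad I^*\approx \|f_r\|_{\dot C^\alpha}.
\end{equation*}
The Schauder bound \eqref{td1} gives $I_\alpha(K(g))\le C_\alpha\|g\|_{\dot C^\alpha}$, so
\begin{equation*}
I_\alpha(R(v))\le \lambda^2 I^* + C_\alpha\bigl(\|\varphi_{2r}\bar a_{11}\|_{\dot C^\alpha}+\|\varphi_{2r}\bar b_2\|_{\dot C^\alpha}+\lambda\|\varphi_{2r}a_{12}\|_{\dot C^\alpha}\bigr) I_\alpha(v).
\end{equation*}
Choosing first $\lambda$ small to tame the $ta_{12}u_{st}$ term and then $r$ small so that the $\dot C^\alpha$ norms of $\bar a_{11}$ and $\bar b_2$ on $\operatorname{supp}\varphi_{2r}$ are below a fixed threshold (which is possible because both vanish at $0$ and are $\dot C^\alpha$), we get $R:S_\alpha\to S_\alpha$ and $R$ is a strict contraction. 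The Banach fixed-point theorem then produces $v\in S_\alpha$ with $I_\alpha(v)\le 2I^*$.

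It remains to identify this $v$ with $u_r$. For this I would invoke the same maximum-principle argument used at the end of the proof of Lemma \ref{regular1}: $w=u_r-v$ satisfies $\mathcal L_1 w = 0$ in $\mathbb R^2_+$ with appropriate decay at infinity, and the barriers $\pm\delta t^{\epsilon+1-a}$ (for any $\epsilon>0$ with $a>2+\epsilon$) force $|w|\le \delta t^{\epsilon+1-a}$; letting $\delta\downarrow 0$ gives $u_r=v$. Thus $I_\alpha(\varphi_r u)\le C$ for $\alpha\in(0,1)$. For $\alpha\in\mathbb R_+\setminus\mathbb Z$ with $[\alpha]\ge 1$, I would proceed by induction on $[\alpha]$: since the $\dot C^\alpha$ seminorm involves only $s$-derivatives, differentiating \eqref{701} in $s$ yields an equation of the same type for $\partial_s u$, with right-hand side controlled in $\dot C^{\alpha-1}$ by the inductive hypothesis and the assumed regularity of the coefficients, and the base case applied at level $\alpha$ closes the induction.

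The main obstacle I expect is quantitative: verifying that after the rescaling $(s,t)\mapsto(\lambda^{-1}s,\lambda^{-2}t)$ and restriction to $\operatorname{supp}\varphi_{2r}$, the $\dot C^\alpha$ seminorms of the perturbation coefficients $\bar a_{11}$, $\bar b_2$, $ta_{12}$ are genuinely small, so that the operator norm of $K\circ Q_\lambda$ on the $I_\alpha$-ball falls strictly below $1$. This is delicate because only $s$-directional H\"older regularity enters the seminorm, so the favorable smallness has to come partly from the $\lambda$-scaling of the $s$-variable and partly from the vanishing of $\bar a_{11},\bar b_2$ at $0$ under the $r$-scaling; the interplay of these two parameters must be tracked carefully and fixed in the right order ($\lambda$ first, then $r$).
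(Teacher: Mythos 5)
Your proposal is correct and follows exactly the route the paper intends: the paper omits the proof of this lemma, stating only that it is ``similar to that of Lemma \ref{regular1}'', and your adaptation --- the same freezing/rescaling/contraction scheme with the Schauder estimate \eqref{td1} replacing the $L^p$ estimate \eqref{td2}, the same barrier argument to identify the fixed point with $\varphi_r u$, and $s$-differentiation to reach $[\alpha]\ge 1$ --- is precisely that proof. You also correctly flag the one genuinely delicate point, namely that smallness of the $\dot C^\alpha$ norms of the perturbation coefficients must come jointly from the vanishing of $\bar a_{11},\bar b_2$ at the origin (sup-norm part) and from the anisotropic $\lambda$-rescaling (seminorm part, which gains a factor $\lambda^\alpha$), so the two parameters cannot be decoupled naively.
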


The proof is similar to that of  Lemma
{\ref{regular1} and is omitted.

Now we prove a regularity result.

\begin{theorem}\label{rem1}
Let $a_{12}, a_{11}, b_{2}, b_{1}$ and $ f$ be $C^{\infty}$ in $\bar{\mathbb R}^2_+$
with  $a_{11}>0$ and $b_{2}(s,0)> 2$, and
$u$ be a solution of
\eqref{701} with $t^\frac 12\partial_t u,
\partial_s u\in L^2_{loc}(\bar{\mathbb R}^2_+)$. Then
$u$ is $C^\infty$ in $\bar{\mathbb R}^2_+$.
\end{theorem}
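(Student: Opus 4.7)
The plan is to bootstrap regularity by iterating Lemma \ref{regular1} and Lemma \ref{regular2}, exploiting a crucial structural property: when we differentiate the equation in the normal direction, the coefficient $b_2$ of $u_t$ is shifted by $+1$, so the Fuchsian hypothesis $b_2(s,0) > 2$ is preserved---indeed strengthened. When we differentiate in the tangential direction, $b_2$ is modified only by terms vanishing at $t=0$, so $b_2(s,0)$ is unchanged. Thus the hypothesis is stable under arbitrarily many differentiations of either type, and Lemmas \ref{regular1}--\ref{regular2} can be applied repeatedly to successive derivatives of $u$.

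First I would establish initial regularity. Interior $C^\infty$ smoothness of $u$ on $\mathbb{R}^2_+$ follows from standard elliptic theory since $\mathcal{L}$ is uniformly elliptic on $\{t \geq \varepsilon\}$ for any $\varepsilon > 0$. Near a boundary point, after translation to the origin and a linear rescaling of $s$ arranging $a_{11}(0) = 1$, Lemma \ref{regular1} yields $I_6(\varphi_r u) < \infty$ and $u \in C^{2/3}$ locally up to the boundary; by the weighted Sobolev embedding of \cite{HH} used in the proof of Theorem \ref{mainregu}, this gives $u, u_s, tu_t \in \dot{C}^{1/6}_{loc}(\bar{\mathbb{R}}^2_+)$. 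Applying Lemma \ref{regular2} with $\alpha = 1/6$ then gives $I_{1/6}(\varphi_r u) < \infty$. Setting $w = \partial_s u$, a direct differentiation of \eqref{701} in $s$ shows
$$t w_{tt} - t a_{12} w_{st} + a_{11} w_{ss} + (b_2 - t a_{12,s}) w_t + (b_1 + a_{11,s}) w_s = f_s - b_{2,s} u_t - b_{1,s} u_s,$$
an equation of exactly the same form as \eqref{701}, with $b_2(s,0) > 2$ preserved. The right-hand side lies in $\dot{C}^\alpha$ by the previous step, and the a priori conditions $w,\, t w_t = t u_{st},\, w_s = u_{ss} \in \dot{C}^\alpha_{loc}$ follow from $I_\alpha(\varphi_r u) < \infty$ (using $t u_{st} = t^{1/2} \cdot t^{1/2} u_{st}$ and the fact that $t^{1/2}$ is $s$-independent). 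Lemma \ref{regular2} applied to $w$ yields $I_\alpha(\varphi_r w) < \infty$; iterating this tangential bootstrap, $I_\alpha(\varphi_r \partial_s^k u) < \infty$ for every $k \geq 0$.

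For the normal bootstrap, set $v = \partial_t u$. Differentiating \eqref{701} in $t$ gives
$$t v_{tt} - t a_{12} v_{st} + a_{11} v_{ss} + (b_2 + 1) v_t + (b_1 - a_{12} - t a_{12,t}) v_s = f_t - a_{11,t} u_{ss} - b_{2,t} u_t - b_{1,t} u_s,$$
so $v$ solves an equation of the same form with $b_2$ replaced by $b_2 + 1$, hence $(b_2 + 1)(s,0) > 3$. The right-hand side lies in $\dot{C}^\alpha$ by the tangential regularity just obtained. Applying Lemma \ref{regular2} to $v$ gives $I_\alpha(\varphi_r v) < \infty$, which in particular provides $\partial_t^2 u = v_t \in \dot{C}^\alpha_{loc}$ (not merely $t \partial_t^2 u$). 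Iterating, $\partial_t^j u$ satisfies an equation of the same form with $b_2$ replaced by $b_2 + j$, and $I_\alpha(\varphi_r \partial_t^j u) < \infty$ for every $j \geq 0$. A double induction on $(k,j)$ combining the two bootstraps yields $\partial_s^k \partial_t^j u \in \dot{C}^\alpha_{loc}(\bar{\mathbb{R}}^2_+)$ for all $k, j \geq 0$, whence $u \in C^\infty(\bar{\mathbb{R}}^2_+)$.

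The main obstacle is the careful bookkeeping at each step to verify that the right-hand side of the differentiated equation, and the a priori $\dot{C}^\alpha_{loc}$ hypotheses on the derivative and its own first derivatives, genuinely follow from the previously established regularity. The key structural input that makes the induction close on itself is the observation that $\partial_t$ shifts $b_2$ up by $1$ (preserving the Fuchsian indicial condition $b_2(s,0) > 2$) while $\partial_s$ modifies $b_2$ only by a term vanishing on the boundary.
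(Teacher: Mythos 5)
Your proposal is correct and follows essentially the same route as the paper: Lemma \ref{regular1} plus the weighted Sobolev embedding to get initial $\dot{C}^{\gamma}$ regularity, Lemma \ref{regular2} for the tangential bootstrap, and finally differentiation of \eqref{701} in $t$ (where the shift $b_{2}\mapsto b_{2}+1$ preserves the indicial condition) to control normal derivatives. The only cosmetic difference is that the paper runs the tangential induction by invoking Lemma \ref{regular2} with increasing exponent $\alpha=k+\gamma$ rather than by differentiating the equation in $s$ at fixed $\alpha$; your explicit observation about the $b_{2}$-shift is precisely what justifies the paper's terse final step.
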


\begin{proof}
It suffices to prove the smoothness near $(s,t)=(0,0)$.
First, by Lemma \ref{regular1}, we have,
for some cutoff function $\varphi_r$,
$$\|\varphi_ru\|_{W^{1,6}(\mathbb R^2_+)}+
\|t\varphi_ru\|_{W^{2,6}(\mathbb R^2_+)}+\|\varphi_r\partial_s^2u\|_{L^6(\mathbb R^2_+)}+\|t^{\frac 12}\varphi_r\partial_{st}u\|_{L^6(\mathbb R^2_+)}\le C.$$
By the Sobolev embedding in \cite{HH}(Lemma B.3), we have, for $\gamma=1-\frac 12-\frac 26=\frac 16$,
\begin{equation}\label{501z}
|\varphi_r u|_{C^\gamma}+
|\varphi_rt\partial_tu|_{C^\gamma}+|\varphi_r\partial_su|_{C^\gamma}\le C.\end{equation}
Next, we apply Lemma
\ref{regular2} by taking $\alpha=\gamma$. Hence,
$$I_\gamma(\varphi_r u)\le C_1,$$ and, in particular,
$$|\varphi_ru|_{\dot{C}^{1,\gamma}}
+|D(\varphi_ru)|_{\dot{C}^{\gamma}}+|\partial_s(\varphi_ru)|_{\dot{C}^{1,\gamma}}+
|t\partial_{t}(\varphi_ru)|_{\dot{C}^{1,\gamma}}\leq C_1,$$
for some constant $C_1$ and smaller $r=r_1$.
Next, we proceed by induction.  Assume, for some $k\ge 1$ and
$r=r_k>0$,
\begin{equation}\label{710z}
|\varphi_ru|_{\dot{C}^{k,\gamma}}
+|D(\varphi_ru)|_{\dot{C}^{k-1,\gamma}}
+|\partial_{s}(\varphi_ru)|_{\dot{C}^{k,\gamma}}+
|t\partial_{t}(\varphi_ru)|_{\dot{C}^{k,\gamma}}\leq C_k.
\end{equation}
Applying Lemma \ref{regular2} to (\ref{701}) for
$\alpha=k+\gamma$, we  get
$$|D(\varphi_ru)|_{\dot{C}^{k,\gamma}}
+ |\partial_{ss}(\varphi_ru)|_{\dot{C}^{k,\gamma}}+
|t(\varphi_ru)|_{\dot{C}^{k+2,\gamma}}\leq C_{k+1},$$
where $r=r_{k+1}$ and $C_{k+1}$ are positive constants. This implies
$$|t\partial_{t}(\varphi_ru)|_{\dot{C}^{k+1,\gamma}}
\leq C_{k+1}. $$
Thus we have completed the proof of
(\ref{710z}) for $k+1$.

Finally, differentiating (\ref{701}) and using Lemma \ref{regular2},
we  get  estimates for
higher order derivatives of $u$. Thus the present theorem
has been proved.
\end{proof}

\end{document}